\pgfplotsset{compat=1.14}
\newcommand{\email}[1]{{\tt #1}}
\newcommand{\R}{\mathbb{R}}
\newcommand{\norm}[1]{\|#1\|}
\newcommand{\dist}[1]{{\rm dist}(#1)}
\newcommand{\mv}{\,\mid\,}
\newcommand{\B}{{\cal B}}
\newcommand{\C}{{\cal C}}
\newcommand{\I}{{\cal I}}
\newcommand{\G}{{\cal G}}
\newcommand{\T}{{\cal T}}
\newcommand{\F}{{\cal F}}
\newcommand{\skalp}[1]{\langle #1\rangle}
\newcommand{\xb}{\bar x}
\newcommand{\yb}{\bar y}
\newcommand{\oo}{o}
\newcommand{\argmin}{\mathop{\rm arg\,min}}
\newcommand{\inn}{{\rm int\,}}
\newcommand{\gph}{\mathrm{gph}\,}
\newcommand{\dom}{\mathrm{dom}\,}
\newcommand{\tto}{\rightrightarrows}
\newcommand{\Limsup}{\mathop{{\rm Lim}\,{\rm sup}}}
\newcommand{\myvec}[1]{\left(\begin{array}{c}#1\end{array}\right)}
\DeclareSymbolFont{largesymbolsA}{U}{txexa}{m}{n}
\DeclareMathSymbol{\varprod}{\mathop}{largesymbolsA}{16}
\newcommand{\ssstar}{semismooth$^{*}$ }
\newcommand{\rge}[1]{{\rm rge\;}(I-#1,#1)}
\newlength{\myAlgBox}
\newtheorem{theorem}{Theorem}[section]
\newtheorem{proposition}[theorem]{Proposition}
\newtheorem{remark}[theorem]{Remark}
\newtheorem{lemma}[theorem]{Lemma}
\newtheorem{corollary}[theorem]{Corollary}
\newtheorem{definition}[theorem]{Definition}
\newtheorem{algorithm}{Algorithm}
\title{On the application of the semismooth* Newton method to \\
variational inequalities of the second kind}
\author{Helmut Gfrerer\thanks{Institute of Computational Mathematics, Johannes Kepler University
Linz, A-4040 Linz, Austria; \email{helmut.gfrerer@jku.at}}
 \and   Ji\v{r}\'{i} V. Outrata\thanks{Institute of Information Theory and Automation, Czech Academy of
 Sciences, 18208 Prague, Czech Republic, and Centre for
              Informatics and Applied Optimization, Federation University of Australia, POB 663,
              Ballarat,  Vic 3350, Australia,  \email{outrata@utia.cas.cz}}
\and{Jan Valdman\thanks{Institute of Information Theory and Automation, Czech Academy of
 Sciences, 18208 Prague, Czech Republic, and Institute of Mathematics, Faculty of Science,
University of South Bohemia,
37005~\v{C}esk\'{e}~Bud\v{e}jovice,
Czech Republic, \email{jan.valdman@utia.cas.cz}}}}
\date{}
\begin{document}
\maketitle
{\footnotesize
{\bf Abstract.}
The paper starts with a concise description of the recently developed semismooth* Newton method for the solution of general inclusions. This method is then applied to a class of  variational inequalities of the second kind. As a result, one obtains an implementable algorithm exhibiting a local superlinear convergence. Thereafter we suggest several globally convergent hybrid algorithms in which one combines the semismooth* Newton method with selected splitting algorithms for the solution of monotone variational inequalities. Their efficiency is documented by extensive numerical experiments.

{\bf Key words.} Newton method, semismoothness${}^*$, superlinear convergence, global convergence, generalized equation,
coderivatives.

{\bf AMS Subject classification.} 65K10, 65K15, 90C33.
}

\section{Introduction}
In \cite{GfrOut19a} the authors have proposed the so-called semismooth* Newton method for the numerical solution of a general inclusion
\begin{equation}\label{eq-100}
0 \in H(x),
\end{equation}
where $H:\mathbb{R}^{n} \rightrightarrows   \mathbb{R^{n}}$ is a closed-graph multifunction. The aim of this paper is to work out this Newton method for the numerical solution of the {\em generalized equation} (GE)
\begin{equation}\label{EqVI2ndK}
0 \in H(x):= f(x)+\partial q (x),
\end{equation}
where $f:\mathbb{R}^{n} \rightarrow \mathbb{R}^{n}$ is continuously differentiable, $q:\mathbb{R}^{n} \rightarrow \overline{\mathbb{R}}$ is proper convex and lower-semicontinuous (lsc) and $\partial$ stands for the classical Moreau-Rockafellar subdifferential. It is easy to see that GE (\ref{EqVI2ndK}) is equivalent with the variational inequality  (VI):

Find $\bar{x} \in \mathbb{R}^{n}$ such that
\begin{equation}\label{eq-102}
\langle f(\xb), x-\bar{x}\rangle + q(x)-q(\bar{x})\geq 0 ~ \mbox{ for all } ~ x \in \mathbb{R}^{n}.
\end{equation}

The model (\ref{eq-102}) has been introduced in \cite{Gl84} and coined the name {\em VI of the second kind}. It is widely used in the literature dealing with equilibrium models in continuum mechanics cf., e.g., \cite{Hasl} and the references therein. For the numerical solution of GE (\ref{EqVI2ndK}), a number of methods can be used ranging from nonsmooth optimization methods (applicable when $\nabla f$ is symmetric) up to a broad family of splitting methods (usable when $H$ is monotone), cf. \cite[Chapter 12]{FaPa03}. If GE (\ref{EqVI2ndK}) amounts to stationarity condition for a Nash game, then also a simple coordinate-wise optimization technique can be used, cf. \cite{KaSch18} and \cite{OV}. Concerning the Newton type methods, let us mention, for instance, the possibility to write down GE (\ref{EqVI2ndK}) as an equation on a monotone graph, which enables us to apply the Newton procedure from \cite{Rob11}. Note, however, that the subproblems to be solved in this approach are typically rather difficult. In other papers the authors reformulate the problem as a (standard) nonsmooth equation which is then solved by the classical semismooth Newton method, see, e.g., \cite{ItKu09, XiLiWeZh18}.

 As mentioned above, in this paper we will investigate the numerical solution of GE (\ref{EqVI2ndK}) via the semismooth* Newton method developed in \cite{GfrOut19a}. This method is based on an important property, which is called semismoothness* and is closely related to the semismoothness property introduced in \cite{Mif77} and \cite{QiSun93}. In contrast to the Newton methods by Josephy the multi-valued part of (\ref{EqVI2ndK}) is also approximated and, differently to some other Newton-type methods, this approximation is provided by means of the graph of the limiting coderivative of $\partial q$. To facilitate the computations, we identify a certain linear structure inside the coderivative of the subdifferential mapping $\partial q$. In this way the computation of the Newton direction
 reduces to the solution of a linear system of equations. To ensure local superlinear convergence one needs merely metric regularity of the considered GE around the solution.

 The plan of the paper is as follows. After the preliminary Section 2 in which we collect the needed notions from modern variational analysis, the semismooth* Newton is described and its convergence is analyzed (Section 3). Thereafter, in Section 4 we develop an implementable version of the method for the solution of GE \eqref{EqVI2ndK} and show its local superlinear convergence under mild assumptions. Section 5 deals with the issue of global convergence. First we suggest a heuristic modification of the method from the preceding section which exhibits very good convergence properties in the numerical experiments. Thereafter we show global convergence for a family of hybrid algorithms where, under monotonicity assumptions, one combines the semismooth* Newton method with various frequently used splitting methods. The resulting algorithms show better convergence properties than the underlying splitting methods themselves. In fact, using this semismooth* hybrid approach we can solve problems, where the pure splitting methods failed. One possible explanation of this phenomenon consists in the fact that for the convergence of the semismooth* Newton method one needs merely the metric regularity and not monotonicity. Finally, the concluding Section 6 is devoted to the presentation of numerical experiments. It contains a low-dimensional Nash equilibrium which admits both a monotone as well as a non-monotone variant. To its computation we apply the implementation developed in Section 4. Thereafter we report about a rather extensive testing of the heuristic and the hybrid methods by means of a specially constructed family of medium-scale GEs.

 The following notations is employed, $\B_{\delta}(\xb)$ is the ball around $\xb$ with radius $\delta, \stackrel{A}{x \rightarrow \bar{x}}$ means convergence within a set $A$ and for a multifunction $\Phi, \gph \Phi := \{ (x,y)| y\in\Phi(x)\}$ stands for its graph. Finally,
$\norm{(A\,\vdots\,B)}_F$ signifies the Frobenius norm of the matrix, composed horizontally from the blocks $A,B$.
\section{Preliminaries}
Throughout the whole paper, we will frequently use  the following basic notions of modern
variational analysis.
 \begin{definition}\label{DefVarGeom}
 Let $A$  be a closed set in $\mathbb{R}^{n}$ and $\bar{x} \in A$. Then
\begin{enumerate}
 \item [(i)]
 $T_{A}(\bar{x}):=\Limsup\limits_{t\searrow 0} \frac{A-\bar{x}}{t}$
 is the {\em tangent (contingent, Bouligand) cone} to $A$ at $\bar{x}$  and \\
 $ \widehat{N}_{A}(\bar{x}):=(T_{A}(\bar{x}))^{\circ} $
 is the {\em regular (Fr\'{e}chet) normal cone} to $A$ at $\bar{x}$.
 \item [(ii)]
 $ N_{A}(\bar{x}):=\Limsup\limits_{\stackrel{A}{x \rightarrow \bar{x}}} \widehat{N}_{A}(x)$
 is the {\em limiting (Mordukhovich) normal cone} to $A$ at $\bar{x}$ and, given a direction $d
 \in\mathbb{R}^{n}$,
$ N_{A}(\bar{x};d):= \Limsup\limits_{\stackrel{t\searrow 0}{d^{\prime}\rightarrow
 d}}\widehat{N}_{A}(\bar{x}+ td^{\prime})$
 is the {\em directional limiting normal cone} to $A$ at $\bar{x}$ {\em in direction} $d$.
 \end{enumerate}
\end{definition}
In this definition "Limsup" stands for the Painlev\' e-Kuratowski {\em outer set limit}.
If $A$ is convex, then $\widehat{N}_{A}(\bar{x})= N_{A}(\bar{x})$ amounts to the classical normal cone in
the sense of convex analysis and we will  write $N_{A}(\bar{x})$. By the definition, the limiting normal
cone coincides with the directional limiting normal cone in direction $0$, i.e.,
$N_A(\bar{x})=N_A(\bar{x};0)$, and $N_A(\bar{x};d)=\emptyset$ whenever $d\not\in T_A(\bar{x})$.

The above listed cones enable us to describe the local behavior of set-valued maps via various
generalized derivatives. Consider a closed-graph multifunction $F:\R^n\tto\R^m$ and the point
$(\xb,\yb)\in \gph F$.

\begin{definition}\label{DefGenDeriv}
\begin{enumerate}

\item[(i)]
 The multifunction $\widehat D^\ast F(\xb,\yb ):
 \mathbb{R}^{m}\rightrightarrows\mathbb{R}^{n}$, defined by
\[
\widehat D^\ast F(\xb,\yb )(v^\ast):=\{u^\ast\in \mathbb{R}^{n} | (u^\ast,- v^\ast)\in \widehat
N_{\gph F}(\xb,\yb )\}, v^\ast\in \mathbb{R}^{m}
\]
is called the {\em regular (Fr\'echet) coderivative} of $F$ at $(\xb,\yb )$.
\item [(ii)]
 The multifunction $D^\ast F(\xb,\yb ): \mathbb{R}^{m}\rightrightarrows\mathbb{R}^{n}$,
 defined by
\[
D^\ast F(\xb,\yb )(v^\ast):=\{u^\ast\in \mathbb{R}^{n} | (u^\ast,- v^\ast)\in N_{\gph F}(\xb,\yb )\},
v^\ast\in \mathbb{R}^{m}
\]
is called the {\em limiting (Mordukhovich) coderivative} of $F$ at $(\xb,\yb )$.
\item [(iii)]
 Given a pair of directions $(u,v) \in \mathbb{R}^{n} \times \mathbb{R}^{m}$, the
 multifunction
 $D^\ast F((\xb,\yb ); (u,v)):
 \mathbb{R}^{n}\rightrightarrows\mathbb{R}^{m}$, defined by
\begin{equation*}
D^\ast  F((\xb,\yb ); (u,v))(v^\ast):=\{u^\ast \in \mathbb{R}^{n} | (u^\ast,-v^\ast)\in N_{\gph
F}((\xb,\yb ); (u,v)) \}, v^\ast\in \mathbb{R}^{m}
\end{equation*}
is called the {\em directional limiting coderivative} of $F$ at $(\xb,\yb )$ in direction $(u,v)$.
\end{enumerate}
\end{definition}

For the properties of the cones $T_A(\bar{x})$, $\widehat N_A(\bar{x})$ and $N_A(\bar{x})$ from
Definition
\ref{DefVarGeom} and generalized derivatives (i) and (ii) from Definition \ref{DefGenDeriv}
we refer the interested reader to the monographs \cite{RoWe98} and \cite{Mo18}. The directional limiting
normal cone and coderivative were introduced by the first author in \cite{Gfr13a} and various properties
of these objects can be found also in \cite{GO3} and the references therein. Note that $D^\ast  F(\xb,\yb
)=D^\ast  F((\xb,\yb ); (0,0))$ and that $\dom D^\ast  F((\xb,\yb ); (u,v))=\emptyset$ whenever $v\not\in
DF(\xb,\yb)(u)$.

Recall that a set-valued mapping $F:\R^n\tto\R^m$ is said to be {\em metrically regular} around a point $(\xb,\yb)\in\gph F$, if the graph of $F$ is locally closed at $(\xb,\yb)$, and there is a constant $\kappa\geq 0$ along with neighborhoods of $U$ of $\xb$ and $V$  of $\yb$ such that
\[\dist{x,F^{-1}(y)}\leq \kappa\, \dist{y,F(x)} \mbox{ for all } (x,y)\in U\times V.\]
The infimum of $\kappa$ over all such combinations of $\kappa$, $U$ and $V$ is called the {\em regularity modulus}  for $F$ at $(\xb,\yb)$  and denoted by ${\rm reg\;}F(\xb,\yb)$.
The following statement follows from \cite[Theorem 9.43]{RoWe98}
\begin{theorem}\label{ThMetrReg}A mapping $F:\R^n\tto\R^m$ is metrically regular at $(\xb,\yb)\in\gph F$, if and only if $\gph F$ is locally closed at $(\xb,\yb)$ and
\begin{equation}
  \label{EqMoCrit} 0\in D^*F(\xb,\yb)(y^*)\ \Rightarrow\ y^*=0.
\end{equation}
Further, in this case one has
\begin{equation}
  \label{EqModMetrReg} {\rm reg\;}F(\xb,\yb)=1/\min\{\dist{0,D^*F(\xb,\yb)(y^*)}\mv\norm{y^*}=1\}.
\end{equation}
\end{theorem}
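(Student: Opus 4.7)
The plan is to derive the statement from the classical Mordukhovich coderivative criterion in the form given by Rockafellar–Wets. The first step is to reduce metric regularity of $F$ to the Aubin (pseudo-Lipschitz) property of $F^{-1}$: a direct verification from the definitions shows that $F$ is metrically regular at $(\xb,\yb)$ with constant $\kappa$ if and only if $F^{-1}$ has the Aubin property at $(\yb,\xb)$ with the same constant, so in particular ${\rm reg\,}F(\xb,\yb)={\rm lip\,}F^{-1}(\yb,\xb)$, where ${\rm lip}$ denotes the Lipschitz modulus. Local closedness of $\gph F$ at $(\xb,\yb)$ is obviously equivalent to local closedness of $\gph F^{-1}$ at $(\yb,\xb)$.

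Next I would apply Theorem 9.43 of \cite{RoWe98} to the mapping $F^{-1}$. That theorem, together with the equivalence between the Aubin property and the Mordukhovich criterion, asserts that $F^{-1}$ has the Aubin property at $(\yb,\xb)$ if and only if its coderivative at that point is single-valued at zero, i.e.\ $0\in D^*F^{-1}(\yb,\xb)(x^*)\Rightarrow x^*=0$. Using the well-known identity $D^*F^{-1}(\yb,\xb)(x^*)=-D^*F(\xb,\yb)^{-1}(-x^*)$, which is immediate from $\gph F^{-1}=\{(y,x)\mv (x,y)\in\gph F\}$ and the definition of the limiting normal cone, this rewrites exactly as \eqref{EqMoCrit}.

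For the quantitative formula \eqref{EqModMetrReg}, I would use the precise value of the Lipschitz modulus given in the same theorem: ${\rm lip\,}F^{-1}(\yb,\xb)$ coincides with the outer norm $|D^*F^{-1}(\yb,\xb)|^+$, i.e.\ the supremum of $\|x^*\|$ over all $x^*\in D^*F^{-1}(\yb,\xb)(y^*)$ with $\|y^*\|\le 1$. Translating once more via $D^*F^{-1}(\yb,\xb)(x^*)=-D^*F(\xb,\yb)^{-1}(-x^*)$, using positive homogeneity of the coderivative, and inverting the supremum yields the infimum characterization on the right-hand side of \eqref{EqModMetrReg}; existence of the minimum follows from compactness of the unit sphere and outer semicontinuity of $D^*F$.

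The routine part is the reduction to $F^{-1}$ and the invocation of \cite[Theorem 9.43]{RoWe98}. The only point that deserves care is the bookkeeping between the Lipschitz-modulus formula as stated in Rockafellar–Wets (in terms of outer norms of $D^*F^{-1}$) and the inf-type expression in \eqref{EqModMetrReg} (in terms of $D^*F$); this is the main technical step, since a careless sign or reciprocal would produce the wrong formula. Once the coderivative-inversion identity is applied and positive homogeneity is used to normalize $\|y^*\|=1$, both the qualitative equivalence and the exact-modulus formula fall out simultaneously.
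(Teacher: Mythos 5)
Your route is exactly the paper's: the text offers no argument beyond the citation of \cite[Theorem 9.43]{RoWe98}, and your reduction (metric regularity of $F$ $\Leftrightarrow$ Aubin property of $F^{-1}$, Mordukhovich criterion, coderivative inversion, outer-norm bookkeeping for the modulus) is the standard way that citation is unpacked, so in substance you are doing the same thing, and the modulus computation via positive homogeneity and attainment of the minimum is fine. One correction is needed, though, precisely at the spot you yourself flagged as the delicate bookkeeping step: the Mordukhovich criterion for the Aubin property of $F^{-1}$ at $(\yb,\xb)$ is the \emph{value-at-zero} condition $D^*F^{-1}(\yb,\xb)(0)=\{0\}$ (which your verbal phrase ``single-valued at zero'' correctly describes), not the kernel condition you displayed. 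As written, ``$0\in D^*F^{-1}(\yb,\xb)(x^*)\Rightarrow x^*=0$'' says that $(−x^*,0)\in N_{\gph F}(\xb,\yb)$ forces $x^*=0$, i.e.\ it translates under the inversion identity to $D^*F(\xb,\yb)(0)=\{0\}$ — the criterion for the Aubin property of $F$ itself (equivalently metric regularity of $F^{-1}$) — and does \emph{not} rewrite as \eqref{EqMoCrit}. With the criterion stated as $y^*\in D^*F^{-1}(\yb,\xb)(0)\Rightarrow y^*=0$, the identity $D^*F^{-1}(\yb,\xb)(x^*)=-D^*F(\xb,\yb)^{-1}(-x^*)$ does give exactly \eqref{EqMoCrit}, and the rest of your argument, including the passage from the outer norm of $D^*F^{-1}(\yb,\xb)$ to the reciprocal-of-minimum form in \eqref{EqModMetrReg}, goes through as you describe.
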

In the construction of the announced globally convergent hybrid algorithms we employ the notion of monotonicity.
\begin{definition}\label{DefMonoton}A mapping $F:\R^n\tto\R^n$ is said to be {\em monotone} if it has the property that
\[\skalp{y_2-y_1,x_2-x_1}\geq 0\ \mbox{ for all }(x_1,y_1),(x_2,y_2)\in\gph F.\]
If, in addition, there is some $\mu>0$ such that
\[\skalp{y_2-y_1,x_2-x_1}\geq \mu\norm{x_2-x_1}^2\ \mbox{ for all }(x_1,y_1),(x_2,y_2)\in\gph F,\]
the mapping is called {\em strongly monotone}.
\end{definition}

Recall that a monotone mapping $F:\R^n\tto\R^n$
is maximal monotone if no enlargement of its graph is possible in $\R^n\times\R^n$
without destroying monotonicity.
Given a maximal monotone mapping $F$ and a positive real $\lambda$, the mapping $(I+\lambda F)^{-1}$ is called the {\em resolvent} of $F$. It is well known that this mapping is single-valued and Lipschitz on the whole $\R^n$, see, e.g., \cite[Theorem 12.12]{RoWe98}.

\section{On the semismooth* Newton method}
In this section we describe the semismooth* Newton method as introduced in \cite{GfrOut19a}. Consider the inclusion
\begin{equation}
  \label{EqIncl}0\in F(x),
\end{equation}
where $F:\R^n\tto\R^m$ is a set-valued mapping with closed graph. The semismoothness* property of $F$ can be defined as follows.
\begin{definition}
  A set-valued mapping $F:\R^n\tto\R^m$ is called {\em \ssstar} at a point $(\xb,\yb)\in\gph F$, if
 for all $(u,v)\in\R^n\times\R^m$ we have
 \begin{equation}\label{EqSemiSmooth}
\skalp{u^*,u}=\skalp{v^*,v}\ \forall (v^*,u^*)\in\gph D^*F((\xb,\yb);(u,v)).
\end{equation}
\end{definition}
In some situations it is convenient to make use of equivalent
characterizations in terms of standard (regular and limiting)  coderivatives, respectively.

\begin{proposition}[{\cite[Corollary 3.3]{GfrOut19a}}]\label{PropCharSemiSmooth}Let $F:\R^n\tto\R^m$ and $(\xb,\yb)\in \gph F$ be given.
Then the following three statements are equivalent.
\begin{enumerate}
\item[(i)] $F$ is \ssstar at $(\xb,\yb)$.
\item[(ii)] For every $\epsilon>0$ there is some $\delta>0$ such that
\begin{multline}
\label{EqCharSemiSmoothReg}
\vert \skalp{x^*,x-\xb}-\skalp{y^*,y-\yb}\vert\leq \epsilon
\norm{(x,y)-(\xb,\yb)}\norm{(x^*,y^*)} \\ \forall(x,y)\in \B_\delta(\xb,\yb)\ \forall
(y^*,x^*)\in\gph \widehat D^*F(x,y).
\end{multline}
\item[(iii)] For every $\epsilon>0$ there is some $\delta>0$ such that
  \begin{multline}\label{EqCharSemiSmoothLim}
\vert \skalp{x^*,x-\xb}-\skalp{y^*,y-\yb}\vert\leq \epsilon
\norm{(x,y)-(\xb,\yb)}\norm{(x^*,y^*)} \\ \forall(x,y)\in \B_\delta(\xb,\yb)\ \forall
(y^*,x^*)\in\gph D^*F(x,y).
\end{multline}
\end{enumerate}
\end{proposition}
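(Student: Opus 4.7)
The plan is to close a cyclic chain $(iii)\Rightarrow(ii)\Rightarrow(i)\Rightarrow(iii)$. The implication $(iii)\Rightarrow(ii)$ is essentially for free: $\widehat N_A\subseteq N_A$ holds by construction, and passing to graphs of coderivatives gives $\gph\widehat D^\ast F(x,y)\subseteq \gph D^\ast F(x,y)$. Hence the quantifier in $(iii)$ ranges over a larger set than the one in $(ii)$, so the inequality automatically descends.

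For $(ii)\Rightarrow(i)$, I would fix $(u,v)\in\R^n\times\R^m$ together with $(v^\ast,u^\ast)\in\gph D^\ast F((\xb,\yb);(u,v))$, meaning $(u^\ast,-v^\ast)\in N_{\gph F}((\xb,\yb);(u,v))$. By Definition 2.1(ii) there exist sequences $t_k\searrow 0$, $(u_k,v_k)\to(u,v)$, and $(u_k^\ast,-v_k^\ast)\to(u^\ast,-v^\ast)$ with $(u_k^\ast,-v_k^\ast)\in\widehat N_{\gph F}((\xb,\yb)+t_k(u_k,v_k))$, i.e., $(v_k^\ast,u_k^\ast)\in\gph\widehat D^\ast F((\xb,\yb)+t_k(u_k,v_k))$. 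For any $\epsilon>0$ the base points lie in $\B_\delta(\xb,\yb)$ for all $k$ large. Applying $(ii)$, dividing through by $t_k$, and passing to the limit yields $|\skalp{u^\ast,u}-\skalp{v^\ast,v}|\leq \epsilon\norm{(u,v)}\norm{(u^\ast,v^\ast)}$. Since $\epsilon>0$ is arbitrary, this forces equality and gives $(i)$.

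For $(i)\Rightarrow(iii)$ I would argue by contradiction. If $(iii)$ fails, there exist $\epsilon_0>0$ and sequences $(x_k,y_k)\to(\xb,\yb)$ together with $(y_k^\ast,x_k^\ast)\in\gph D^\ast F(x_k,y_k)$, normalized so that $\norm{(x_k^\ast,y_k^\ast)}=1$ (using positive homogeneity), violating the bound. Set $t_k:=\norm{(x_k,y_k)-(\xb,\yb)}$ and $(u_k,v_k):=((x_k,y_k)-(\xb,\yb))/t_k$; Bolzano-Weierstrass delivers a subsequence with $(u_k,v_k)\to(u,v)$ on the unit sphere and $(x_k^\ast,y_k^\ast)\to(x^\ast,y^\ast)$. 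Dividing the violated inequality by $t_k$ and passing to the limit gives $|\skalp{x^\ast,u}-\skalp{y^\ast,v}|\geq\epsilon_0$. The main obstacle is that $D^\ast F(x_k,y_k)$ is itself built through a limit, so the candidate normals at $(x_k,y_k)$ are not themselves regular. I would handle this by a diagonal construction: choose $(\tilde x_k,\tilde y_k)$ and $(\tilde x_k^\ast,-\tilde y_k^\ast)\in\widehat N_{\gph F}(\tilde x_k,\tilde y_k)$ with $\norm{(\tilde x_k,\tilde y_k)-(x_k,y_k)}\leq t_k/k$ and $\norm{(\tilde x_k^\ast,\tilde y_k^\ast)-(x_k^\ast,y_k^\ast)}\leq 1/k$. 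A short rescaling check, using that the perturbation $t_k/k$ is asymptotically negligible compared to $t_k$, shows that $\tilde t_k:=\norm{(\tilde x_k,\tilde y_k)-(\xb,\yb)}$ satisfies $\tilde t_k\searrow 0$ and $((\tilde x_k,\tilde y_k)-(\xb,\yb))/\tilde t_k\to(u,v)$, while $(\tilde x_k^\ast,\tilde y_k^\ast)\to(x^\ast,y^\ast)$. Passing to the outer limit in $\widehat N_{\gph F}$ delivers $(x^\ast,-y^\ast)\in N_{\gph F}((\xb,\yb);(u,v))$, equivalently $(y^\ast,x^\ast)\in\gph D^\ast F((\xb,\yb);(u,v))$. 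Property $(i)$ then forces $\skalp{x^\ast,u}=\skalp{y^\ast,v}$, contradicting $|\skalp{x^\ast,u}-\skalp{y^\ast,v}|\geq\epsilon_0>0$.
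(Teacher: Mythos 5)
Your proof is correct. Note that the paper itself does not prove this proposition---it is imported verbatim as Corollary~3.3 of the cited reference \cite{GfrOut19a}---so there is no in-paper argument to compare against; your cyclic chain $(iii)\Rightarrow(ii)\Rightarrow(i)\Rightarrow(iii)$, with the cone inclusion $\widehat N_{\gph F}\subseteq N_{\gph F}$ for the first step, the rescaling of the $\epsilon$-$\delta$ estimate along the defining sequences of the directional limiting normal cone for the second, and the normalization-plus-diagonal selection (replacing limiting normals at $(x_k,y_k)$ by nearby regular normals at points $(\tilde x_k,\tilde y_k)\in\gph F$ with errors $o(t_k)$) for the third, is exactly the standard argument by which this equivalence is established in that reference. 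The two points worth making explicit are the ones you implicitly use: the points $(\xb,\yb)+t_k(u_k,v_k)$ automatically lie in $\gph F$ (otherwise the regular normal cone is empty), and the normalization $\norm{(x_k^*,y_k^*)}=1$ is legitimate because $\gph D^*F(x_k,y_k)$ is a cone and both sides of the violated inequality are positively homogeneous in $(x_k^*,y_k^*)$.
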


The idea behind the semismooth* Newton method for solving \eqref{EqIncl} is as follows. If $F$ is \ssstar at $(\xb,0)$ and we are given some point $(x,y)\in\gph F$ close to $(\xb,0)$, then for every $(y^*,x^*)\in\gph D^*F(x,y)$ there holds
\[\skalp{x^*,x-\xb}=\skalp{y^*,y-0}+\oo(\norm{(x,y)-(\xb,\yb)}\norm{(x^*,y^*)})\]
by the definition of the semismoothness* property. We choose now $n$ pairs $(v_i^*,u_i^*)\in \gph D^*F(x,y)$, $i=1,\ldots, n$, compute a solution $\Delta x$ of the system
\begin{equation}\label{EqBasSystem}\skalp{x_i^*,\Delta x}=-\skalp{y_i^*,y},\ i=1,\ldots,n\end{equation}
and expect that $\norm{(x+\Delta x)-\xb}=\oo(\norm{x-\xb}$.

In order to work out this basic idea, we introduce the following notation. Given $(x,y)\in\gph F$, we denote by ${\cal A}F(x,y)$ the collection of all pairs of $n\times n$ matrices
$(A,B)$, such that there are $n$ elements $(y_i^*,x_i^*)\in \gph D^*F(x,y)$, $i=1,\ldots, n$, and the
$i$-th row of $A$ and $B$ are ${x_i^*}^T$ and ${y_i^*}^T$, respectively. Thus, the system \eqref{EqBasSystem} is of the form
\[A\Delta x=-By\]
with $(A,B)\in {\cal A}F(x,y)$. This system should have a unique solution and this leads us to the definition
\[{\cal A}_{\rm reg}F(x,y):=\{(A,B)\in {\cal A}F(x,y)\mv A\mbox{ non-singular}\}.\]
The set ${\cal A}_{\rm reg}F(x,y)$ is nonempty if, e.g., the mapping $F$ is strongly metrically regular around $(x,y)$, cf. \cite[Theorem 4.1]{GfrOut19a}. However, strong metric regularity is only a sufficient condition, for the problem \eqref{EqVI2ndK} we will show below that the weaker assumption of metric regularity is also sufficient.

For the local convergence analysis of the \ssstar Newton method, the following result plays a central role.
\begin{proposition}[{\cite[Proposition 4.3]{GfrOut19a}}]\label{PropConv} Assume that the mapping $F:\R^n\tto\R^n$ is \ssstar at
$(\xb,0)\in\gph F$. Then for every $\epsilon>0$ there is some $\delta>0$ such that for every $(x,y)\in
\gph F\cap \B_{\delta}(\xb,0)$ and every pair $(A,B)\in {\cal A}_{\rm reg}F(x,y)$ one has
\begin{equation}\label{EqBndNewtonStep}\norm{(x-A^{-1}By)-\xb }\leq
\epsilon\norm{A^{-1}}\norm{(A\,\vdots\, B)}_F\norm{(x,y)-(\xb,0)}.\end{equation}
\end{proposition}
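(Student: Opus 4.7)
The plan is to apply the definition of semismoothness$^*$ simultaneously to the $n$ coderivative pairs that constitute the rows of $(A,B)$, and then to invert $A$. The argument is essentially a single algebraic combination rather than a difficult estimate, so I will lay out the bookkeeping.

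First I would fix $\epsilon>0$ and invoke the definition of semismoothness$^*$ directly (not via Proposition \ref{PropCharSemiSmooth}, since we are evaluating the coderivative at the varying base point $(x,y)\in\gph F$ rather than at $(\xb,0)$). Actually, the more convenient characterization is the limiting one \eqref{EqCharSemiSmoothLim}: it yields some $\delta>0$ such that for every $(x,y)\in\B_\delta(\xb,0)$ and every $(y^*,x^*)\in\gph D^*F(x,y)$ we have
\[ \bigl|\skalp{x^*,x-\xb}-\skalp{y^*,y}\bigr|\leq \epsilon\,\norm{(x,y)-(\xb,0)}\,\norm{(x^*,y^*)}. \]
This is the only analytic ingredient; the rest is linear algebra.

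Next, given $(x,y)\in\gph F\cap\B_\delta(\xb,0)$ and $(A,B)\in\mathcal{A}_{\rm reg}F(x,y)$, I would unfold the definition of $\mathcal{A}F(x,y)$: there exist $(y_i^*,x_i^*)\in\gph D^*F(x,y)$, $i=1,\ldots,n$, whose transposes are the $i$-th rows of $A$ and $B$. Applying the displayed semismoothness$^*$ inequality to each pair, the left-hand sides are precisely the components of the vector $A(x-\xb)-By$, and the norms $\norm{(x_i^*,y_i^*)}$ are the Euclidean norms of the $i$-th rows of the augmented matrix $(A\,\vdots\,B)$. Squaring the $n$ inequalities and summing therefore gives
\[ \norm{A(x-\xb)-By}^2\leq \epsilon^2\,\norm{(x,y)-(\xb,0)}^2\sum_{i=1}^n\norm{(x_i^*,y_i^*)}^2=\epsilon^2\,\norm{(x,y)-(\xb,0)}^2\,\norm{(A\,\vdots\,B)}_F^2, \]
by the very definition of the Frobenius norm.

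Finally, since $(A,B)\in \mathcal{A}_{\rm reg}F(x,y)$ guarantees that $A$ is non-singular, I would left-multiply $A(x-\xb)-By$ by $A^{-1}$ and use the submultiplicative bound $\norm{A^{-1}z}\leq\norm{A^{-1}}\norm{z}$, obtaining
\[ \norm{(x-A^{-1}By)-\xb}\leq \norm{A^{-1}}\,\norm{A(x-\xb)-By}\leq \epsilon\,\norm{A^{-1}}\,\norm{(A\,\vdots\,B)}_F\,\norm{(x,y)-(\xb,0)}, \]
which is \eqref{EqBndNewtonStep}. I do not anticipate a real obstacle: the only subtlety is that the semismoothness$^*$ estimate has to be applied at the varying point $(x,y)$ rather than at $(\xb,0)$, which is exactly why one needs the characterization \eqref{EqCharSemiSmoothLim} (equivalent to the definition by Proposition \ref{PropCharSemiSmooth}) instead of the raw directional identity \eqref{EqSemiSmooth}.
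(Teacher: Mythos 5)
Your argument is correct and is essentially the standard proof of this result: the paper itself only cites \cite[Proposition 4.3]{GfrOut19a}, and the proof there proceeds exactly as you do, applying the coderivative characterization of semismoothness${}^*$ at the varying point $(x,y)$ row by row, summing the squared estimates to get $\norm{A(x-\xb)-By}\leq\epsilon\norm{(x,y)-(\xb,0)}\norm{(A\,\vdots\,B)}_F$, and then multiplying by $A^{-1}$. No gaps.
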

As a byproduct of this statement we obtain the following corollary which is not directly related with the \ssstar Newton method.
\begin{corollary}\label{CorIsolSol}
Assume that the mapping $F:\R^n\tto\R^n$ is \ssstar at $(\xb,0)\in\gph F$ and assume that there are positive reals $\bar\delta$ and $\bar\kappa$ such that for every $(x,y)\in\gph F\cap \B_{\bar\delta}(\xb,0)$ there are matrices $(A,B)\in {\cal A}_{\rm reg}F(x,y)$ such that
\[\norm{A^{-1}}\norm{(A\,\vdots\, B)}_F\leq\kappa.\]
Then $\xb$ is an isolated solution of the inclusion $0\in F(x)$.
\end{corollary}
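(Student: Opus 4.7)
The plan is to deduce the corollary directly from Proposition \ref{PropConv} by choosing the tolerance $\epsilon$ small enough that the Newton-like contraction estimate forces a neighboring solution to coincide with $\bar x$.

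First, I would fix $\epsilon := 1/(2\bar\kappa)$ and invoke Proposition \ref{PropConv} to obtain a $\delta_0 > 0$ such that \eqref{EqBndNewtonStep} holds for every $(x,y)\in\gph F\cap \B_{\delta_0}(\xb,0)$ and every $(A,B)\in {\cal A}_{\rm reg}F(x,y)$. Set $\delta := \min\{\delta_0,\bar\delta\}$.

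Next, suppose $x \in \B_{\delta}(\xb)$ is an arbitrary solution of $0\in F(x)$. Then $(x,0)\in\gph F\cap \B_{\delta}(\xb,0)$, so by the standing hypothesis of the corollary there exist matrices $(A,B)\in {\cal A}_{\rm reg}F(x,0)$ with $\norm{A^{-1}}\norm{(A\,\vdots\, B)}_F\leq \bar\kappa$. Applying \eqref{EqBndNewtonStep} at the point $(x,y)=(x,0)$ (where the "Newton step" term $A^{-1}B\cdot 0$ vanishes) gives
\[
\norm{x-\xb}\;=\;\norm{(x-A^{-1}B\cdot 0)-\xb}\;\leq\;\epsilon\,\bar\kappa\,\norm{(x,0)-(\xb,0)}\;=\;\tfrac{1}{2}\norm{x-\xb},
\]
which forces $\norm{x-\xb}=0$, i.e., $x=\xb$. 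Hence $\xb$ is the only solution of $0\in F(x)$ in $\B_{\delta}(\xb)$, proving isolation.

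There is no real obstacle here beyond recognizing that the hypothesis supplies a uniform bound on $\norm{A^{-1}}\norm{(A\,\vdots\, B)}_F$ along \emph{all} nearby graph points, so in particular at any nearby solution $(x,0)$, and that setting $y=0$ annihilates the $A^{-1}By$ correction in \eqref{EqBndNewtonStep}. The rest is just choosing $\epsilon$ strictly smaller than $1/\bar\kappa$ to obtain a strict contraction.
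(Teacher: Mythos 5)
Your proof is correct and follows essentially the same route as the paper: the paper argues by contraposition, picking a hypothetical second solution $\tilde x$ and deriving the same contradiction $\norm{\tilde x-\xb}\leq \frac12\norm{\tilde x-\xb}$ from Proposition \ref{PropConv} with $\epsilon=1/(2\kappa)$ and $y=0$, while you phrase it directly as "any solution in $\B_\delta(\xb)$ must equal $\xb$" — the same argument in contrapositive form.
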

\begin{proof}
By contraposition. Assume that $\xb$ is not an isolated solution and find $0<\delta <\bar \delta$ such that \eqref{EqBndNewtonStep} holds with $\epsilon=1/(2\kappa)$. Since $\xb$ is not an isolated solution, there exists another solution $\tilde x \not=\xb$ in $\B_{\delta}(\xb)$ and by picking suitable matrices $(A,B)\in {\cal A}_{\rm reg}F(\tilde x,0)$ we obtain the contradiction
  \[\norm{\tilde x-\xb}=\norm{(\tilde x-A^{-1}B0)-\xb}\leq \frac 1{2\kappa}\norm{A^{-1}}\norm{(A\,\vdots\, B)}_F\norm{(\tilde x,0)-(\xb,0)}\leq \frac 12\norm{\tilde x-\xb}.\]
\end{proof}
We are now in the position to describe the iteration step of the \ssstar Newton method. Assume we are given some iterate $x^{(k)}$. We cannot expect in general that $F(x^{(k)})\not=\emptyset$ or that $0$ is close to
$F(x^{(k)})$, even if $x^{(k)}$ is close to a solution $\xb$. Thus we perform first some  step which
yields $(\hat x^{(k)},\hat y^{(k)})\in\gph F$ as  an approximate projection of $(x^{(k)},0)$ on $\gph F$.
Further we require that ${\cal A}_{\rm reg}F(\hat x^{(k)},\hat y^{(k)})\not=\emptyset$ and  compute the
new iterate as $x^{(k+1)}=\hat x^{(k)}-A^{-1}B\hat y^{(k)}$ for some $(A,B)\in {\cal A}_{\rm reg}F(\hat
x^{(k)},\hat y^{(k)})$. This leads to    the following conceptual algorithm.

\begin{algorithm}[\ssstar Newton-type method for generalized equations]\label{AlgNewton}\mbox{ }\\
 1. Choose a starting point $x^{(0)}$, set the iteration counter $k:=0$.\\
 2. If ~ $0\in F(x^{(k)})$, stop the algorithm.\\
  3. {\bf Approximation step: } Compute
  $$(\hat x^{(k)},\hat y^{(k)})\in\gph F$$ close to $(x^{(k)},0)$ such that ${\cal
  A}_{\rm reg}F(\hat x^{(k)},\hat y^{(k)})\not=\emptyset$.\\
  4. {\bf Newton step: }Select
  $$(A,B)\in {\cal A}_{\rm reg}F(\hat x^{(k)},\hat y^{(k)})$$ and compute the new iterate
  $$x^{(k+1)}=\hat x^{(k)}-A^{-1}B\hat y^{(k)}.$$\\
  5. Set $k:=k+1$ and go to 2.
\end{algorithm}

Now let us consider convergence properties of Algorithm \ref{AlgNewton}. Given two reals $L,\kappa>0$ and a solution $\xb$ of \eqref{EqIncl}, we denote
\[\G_{F,\xb}^{L,\kappa}(x):=\{(\hat x,\hat y,A,B)\mv \norm{(\hat x-\xb,\hat y)}\leq L\norm{x-\xb},\
(A,B)\in {\cal A}_{\rm reg}F(\hat x,\hat y), \norm{A^{-1}}\norm{(A\,\vdots\,B)}_F\leq\kappa\}.\]
\begin{theorem}[{\cite[Theorem 4.4]{GfrOut19a}}]\label{ThConvSemiSmmooth1}
  Assume that $F$ is \ssstar at $(\xb,0)\in\gph F$ and assume that there are  $L,\kappa>0$ such
  that for every $x\not\in F^{-1}(0)$ sufficiently close to $\xb$ we have
  $\G_{F,\xb}^{L,\kappa}(x)\not=\emptyset$. Then there exists some $\delta>0$ such that for every
  starting point $x^{(0)}\in\B_\delta(\xb)$ Algorithm \ref{AlgNewton} either stops after
  finitely many iterations at a solution or produces a sequence $x^{(k)}$ which converges
  superlinearly to $\xb$, provided we choose in every iteration $(\hat x^{(k)},\hat y^{(k)},A,B)\in
  \G_{F,\xb}^{L,\kappa}(x^{(k)})$.
\end{theorem}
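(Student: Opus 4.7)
The plan is to derive the whole convergence result from Proposition \ref{PropConv}, combined with the defining estimate of $\G_{F,\xb}^{L,\kappa}$. At iteration $k$ the algorithm first produces an approximate projection $(\hat x^{(k)},\hat y^{(k)})\in\gph F$ whose distance from $(\xb,0)$ is controlled by $L\norm{x^{(k)}-\xb}$, and then performs a linear-algebraic Newton step $x^{(k+1)}=\hat x^{(k)}-A^{-1}B\hat y^{(k)}$ which is controlled by \eqref{EqBndNewtonStep}. Chaining these two estimates transfers closeness of $x^{(k)}$ to $\xb$ into closeness of $x^{(k+1)}$ to $\xb$, and the arbitrariness of $\epsilon>0$ in Proposition \ref{PropConv} is the mechanism which turns mere linear convergence into superlinear convergence.

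More concretely, I would fix $\eta\in(0,1)$ and set $\epsilon_\eta:=\eta/(\kappa L)$. Proposition \ref{PropConv} supplies some $\delta_\eta>0$ such that for every $(x,y)\in\gph F\cap\B_{\delta_\eta}(\xb,0)$ and every $(A,B)\in {\cal A}_{\rm reg}F(x,y)$ satisfying $\norm{A^{-1}}\norm{(A\,\vdots\,B)}_F\leq\kappa$ one has
\[
\norm{(x-A^{-1}By)-\xb}\leq \epsilon_\eta\,\kappa\,\norm{(x,y)-(\xb,0)}=\frac{\eta}{L}\,\norm{(x,y)-(\xb,0)}.
\]
Next, choose $\delta>0$ small enough that (a) $\G_{F,\xb}^{L,\kappa}(x)\neq\emptyset$ for every $x\in\B_\delta(\xb)\setminus F^{-1}(0)$, and (b) $L\delta\leq\delta_{1/2}$. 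If the algorithm terminates at some $x^{(k)}$ there is nothing to prove, so assume it does not; then for any $x^{(k)}\in\B_\delta(\xb)$ the assumption guarantees a quadruple $(\hat x^{(k)},\hat y^{(k)},A,B)\in\G_{F,\xb}^{L,\kappa}(x^{(k)})$ and, by choice of $\delta$, the point $(\hat x^{(k)},\hat y^{(k)})$ lies in $\gph F\cap \B_{\delta_{1/2}}(\xb,0)$. Combining the two displayed bounds yields
\[
\norm{x^{(k+1)}-\xb}\leq \frac{1}{2L}\,\norm{(\hat x^{(k)}-\xb,\hat y^{(k)})}\leq \tfrac12\norm{x^{(k)}-\xb}.
\]
This proves invariance of $\B_\delta(\xb)$ under the iteration and $Q$-linear convergence $x^{(k)}\to\xb$ with rate at most $1/2$, so the algorithm is well defined and the sequence converges.

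To upgrade to $Q$-superlinear convergence I would repeat the same calculation with a variable $\eta$. Since $x^{(k)}\to\xb$, for every $\eta\in(0,1)$ there is $k_0=k_0(\eta)$ such that $L\norm{x^{(k)}-\xb}\leq \delta_\eta$ for all $k\geq k_0$; the same chain of inequalities then yields $\norm{x^{(k+1)}-\xb}\leq \eta\norm{x^{(k)}-\xb}$ for all $k\geq k_0$. Letting $\eta\searrow 0$ gives $\limsup_k \norm{x^{(k+1)}-\xb}/\norm{x^{(k)}-\xb}=0$, which is $Q$-superlinear convergence.

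The delicate point, which I expect to be the only real obstacle, is the calibration of constants: the approximation step inflates the error by a factor up to $L$ and the Newton step inflates it by a factor up to $\norm{A^{-1}}\norm{(A\,\vdots\,B)}_F\leq\kappa$, so the product $\kappa L$ must be absorbed into the $\epsilon$ delivered by Proposition \ref{PropConv}. Once this calibration is performed the rest is bookkeeping, and the invariance of the trapping ball $\B_\delta(\xb)$ is automatic from the contraction factor $1/2$.
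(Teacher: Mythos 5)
Your proposal is correct, and it follows essentially the standard route: the paper itself states this theorem without proof, citing \cite{GfrOut19a}, and the argument there is exactly your calibration of the $\epsilon$ from Proposition \ref{PropConv} against the product $\kappa L$ to obtain a trapping ball and a contraction factor, followed by letting $\epsilon\searrow 0$ along the convergent sequence to upgrade linear to superlinear convergence. No gaps: the needed nonemptiness of $\G_{F,\xb}^{L,\kappa}(x^{(k)})$ is available precisely because non-terminating iterates satisfy $x^{(k)}\notin F^{-1}(0)$ and remain in $\B_\delta(\xb)$ by your induction.
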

According to Theorem \ref{ThConvSemiSmmooth1}, the outcome $(\hat x^{(k)},\hat y^{(k)})\in\gph F$ from the approximation step has to fulfill the inequality
\begin{equation}\label{EqBndApprStep}\norm{(\hat x^{(k)},\hat y^{(k)})-(\xb,0)}\leq L\norm{x^{(k)}-\xb}.\end{equation}
It is easy to show  that this estimate holds true if
\[\norm{(\hat x^{(k)},\hat y^{(k)})-(x^{(k)},0))}\leq \beta \, \dist{(x^{(k)},0),\gph F},\]
i.e., $(\hat x^{(k)},\hat y^{(k)})$ is some approximate projection of $(x^{(k)},0)$ on $\gph F$. In fact, it suffices when the deviation of $(\hat x^{(k)},\hat y^{(k)})$ from the exact  projection is proportional to the distance $\dist{(x^{(k)},0),\gph F}$. So the approximation of the projection can be rather crude.\\

In the computation of matrices A, B needed in the Newton step we will make use of the following result which is interesting also for its own sake.

\begin{theorem}\label{ThCoderSubdiff}
  Let $q:\R^n\to\bar\R$ be proper convex and lsc. Then for every $(x,x^*)\in \gph\partial q$ there is a positive semidefinite matrix $G$ with $\norm{G}\leq 1$ such that
  \begin{equation}\label{EqRgeG}\rge{G}:=\{\big((I-G)v^*,G v^*\big)\mv v^*\in\R^n\}\subset \gph D^*(\partial q)(x,x^*).\end{equation}
\end{theorem}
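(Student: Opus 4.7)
The plan is to parameterize $\gph\partial q$ by means of the proximal map. Let $P:=(I+\partial q)^{-1}$; as the gradient of the Moreau envelope $e_q(y):=\min_x\{q(x)+\tfrac12\|y-x\|^2\}$, which is convex and of class $C^{1,1}$, the map $P$ is a single-valued, 1-Lipschitz function from $\R^n$ onto itself, and at every point of Fr\'echet differentiability its Jacobian is symmetric, positive semidefinite, with spectral norm at most $1$. The assignment $h(y):=(P(y),y-P(y))$ is a bijection from $\R^n$ onto $\gph\partial q$ whose inverse $(x,x^*)\mapsto x+x^*$ is linear, so $h$ and $h^{-1}$ are bi-Lipschitz.

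First I would treat a point $\tilde y$ of Fr\'echet differentiability of $P$, writing $M:=\nabla P(\tilde y)$ and $(\tilde x,\tilde x^*):=h(\tilde y)$. Using $h$ as a Lipschitz chart, a standard computation with difference quotients yields
\[
T_{\gph\partial q}(\tilde x,\tilde x^*)=\{(Mw,(I-M)w)\mid w\in\R^n\},
\]
an $n$-dimensional linear subspace. Its polar, $\widehat N_{\gph\partial q}(\tilde x,\tilde x^*)$, is therefore its orthogonal complement, and by symmetry of $M$ this collapses to the set of $(u^*,-v^*)$ satisfying $Mu^*=(I-M)v^*$. Setting $G:=I-M$, which is still symmetric, positive semidefinite, and of norm $\leq 1$ since the eigenvalues of $M$ lie in $[0,1]$, the identity $MG=(I-M)(I-G)=M-M^2$ immediately yields $((I-G)v^*,Gv^*)\in\gph \widehat D^*(\partial q)(\tilde x,\tilde x^*)$ for every $v^*\in\R^n$.

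Next, for an arbitrary $(x,x^*)\in\gph\partial q$, set $\bar y:=x+x^*$, so that $h(\bar y)=(x,x^*)$. By Rademacher's theorem the differentiability set of $P$ is of full measure, hence dense, so I can choose $y_k\to\bar y$ with $P$ differentiable at each $y_k$. The matrices $G_k:=I-\nabla P(y_k)$ all lie in the compact set of symmetric positive semidefinite matrices of norm $\leq 1$, so after extracting a subsequence $G_k\to G$ with $G$ of the same type. The previous paragraph provides $(G_k v^*,-(I-G_k)v^*)\in \widehat N_{\gph\partial q}(h(y_k))$ for every $v^*$; since $h(y_k)\to(x,x^*)$, the very definition $N_{\gph\partial q}=\Limsup\widehat N_{\gph\partial q}$ delivers $(Gv^*,-(I-G)v^*)\in N_{\gph\partial q}(x,x^*)$, equivalently the desired inclusion $\rge{G}\subset \gph D^*(\partial q)(x,x^*)$.

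The main technical obstacle I anticipate is the tangent-cone identity in the differentiable step. The inclusion $\{(Mw,(I-M)w)\}\subset T_{\gph\partial q}(h(\tilde y))$ is immediate from the definition of the derivative of $h$, but the reverse inclusion requires pulling a candidate tangent $(a,b)$ back through $h^{-1}$ along a sequence $t_k\searrow 0$ and invoking Fr\'echet (not merely directional) differentiability of $P$ at $\tilde y$ to identify the limit as $(M(a+b),(I-M)(a+b))$. Once this tangent-space description is secured, the rest is compactness and a routine passage to the limit in the definition of the limiting normal cone.
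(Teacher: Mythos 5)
Your proof is correct, and at its core it builds the same object as the paper: in the limit your matrix $G=\lim_k\big(I-\nabla P(y_k)\big)$ is precisely an element of the B-subdifferential of $\nabla e_1q=I-P$ at $x+x^*$, which is exactly the paper's $G$; both arguments rest on Rademacher's theorem, on symmetry and positive semidefiniteness (with norm at most $1$) of the Jacobian of the resolvent-type map at its differentiability points, and on a passage to the limit. Where you genuinely diverge is in how this Jacobian information is transported into $\gph D^*(\partial q)(x,x^*)$: the paper stays with the single-valued Lipschitz map $\nabla e_1q$, uses that B-subdifferential elements yield coderivative elements of that map, and then converts $D^*(\nabla e_1q)(x+x^*,x^*)$ into $D^*(\partial q)(x,x^*)$ by the (here exact) coderivative calculus for inverses and for adding the identity; you instead use the Minty parameterization $h(y)=(P(y),y-P(y))$ of $\gph\partial q$, compute the tangent space and hence the regular normal space of $\gph\partial q$ directly at the points $h(y_k)$, and then invoke only the definition of the limiting normal cone. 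Your route is more elementary and self-contained (no calculus rules, no B-subdifferential-to-coderivative lemma), at the price of the tangent-cone identity you rightly single out as the technical core; your sketch of that step — pulling a tangent back through the linear inverse $(x,x^*)\mapsto x+x^*$ and using Fr\'echet differentiability together with Lipschitz continuity of $P$ — is the correct argument. Two cosmetic slips, neither load-bearing: $P=(I+\partial q)^{-1}$ is not the gradient of the Moreau envelope of $q$ (rather $I-P=\nabla e_1q$, equivalently $P$ is the gradient of the convex $C^{1,1}$ function $\tfrac12\norm{\cdot}^2-e_1q$), and $P$ maps $\R^n$ onto $\dom\partial q$, not onto $\R^n$; the properties you actually use (single-valuedness, nonexpansiveness, monotonicity, symmetric positive semidefinite Jacobian of norm at most one where it exists, and bijectivity of $h$ onto $\gph\partial q$) all remain valid.
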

\begin{proof}
  Consider  the Moreau envelope function
\begin{equation} e_1 q(y):=\inf_x\big(q(x)+\frac 1{2}\norm{x-y}^2\big).\end{equation}
By \cite[Exercise 12.23]{RoWe98}, $e_1q$ is continuously differentiable  on $\R^n$ and $\nabla e_1q$ is a maximal monotone, single valued mapping, which is Lipschitz continuous with constant $1$, and
\[\nabla e_1q=(I+(\partial q)^{-1})^{-1}.\]
Thus
\[y^*=\nabla e_1q(y)\ \Leftrightarrow\ y\in (I+(\partial q)^{-1})(y^*)\ \Leftrightarrow\ y^*\in \partial q(y-y^*)\]
and, consequently, $(z,z^*)\in \gph \partial q \Leftrightarrow z^*=\nabla e_1q(z+z^*)$.
Next, consider an element $G$ from the B-subdifferential $\overline{\nabla}(\nabla e_1q)(x+x^*)$ together with sequences $y_k\to x+x^*$ and $G_k\to G$ with $G_k=\nabla(\nabla e_1q)(y_k)$. From the monotonicity of $\nabla e_1q$ it follows that $G_k$ is positive semidefinite  and hence so is $G$ as well. Further, by \cite[Theorem 13.52]{RoWe98}, $G$ is symmetric and from the Lipschitz continuity of $\nabla e_1q$ we deduce that $\norm{G}\leq 1$. Since $G$ belongs to the B-subdifferential of $\nabla e_1q$ at $x+x^*$, we have
\[(v^*, G^Tv^*)=(v^*, Gv^*)\in D^*(\nabla e_1q)(x+x^*,x^*)=D^*\Big((I+(\partial q)^{-1})^{-1}\Big)(x+x^*,x^*)\ \forall v^*\in\R^n.\]
Taking into account some elementary calculus rules for coderivatives, we conclude that
\begin{align*}
  &(v^*, Gv^*)\in D^*\Big((I+(\partial q)^{-1})^{-1}\Big)(x+x^*,x^*)\Leftrightarrow(-Gv^*,-v^*)\in D^*(I+(\partial q)^{-1})(x^*, x+x^*)\\
  &\Leftrightarrow
  (-Gv^*,-v^*+Gv^*)\in D^*(\partial q)^{-1}(x^*, x)\Leftrightarrow (v^*-Gv^*,Gv^*)\in D^*(\partial q)(x, x^*),
\end{align*}
and the assertion of the theorem follows.
\end{proof}
\section{Implementation of the \ssstar Newton method\label{SecTwoImpl}}
There are a lot of possibilities how to implement the \ssstar Newton method. Apart from the Newton step, which is not uniquely determined by different selections of the coderivatives, there is a multitude of possibilities how to perform the approximation step. In this section we will construct  an implementable version of the \ssstar Newton method for the numerical solution of GE \eqref{EqVI2ndK}. We restrict ourselves to the case where the approximation step is performed by means of the mapping $u_\gamma$ defined as
\begin{equation}\label{Eq_u_gamma}u_\gamma(x):=\argmin_u(\frac 12 \gamma\norm{u}^2+\skalp{f(x),u}+q(x+u)),\end{equation}
where $\gamma>0$ is some scaling parameter. Note that
$u_\gamma $ is clearly single-valued due to the strong convexity of the objective.
The first-order (necessary and sufficient) optimality condition reads as
\begin{equation}\label{EqOptCond_u}0\in \gamma u_\gamma(x)+f(x)+\partial q(x+u_\gamma(x)),\end{equation}
which can be equivalently written as
\[\gamma x-f(x)\in (\gamma I+\partial q)(x+u_\gamma(x)).\]
Let us premultiply this inclusion by $\lambda:=1/\gamma$.  One obtains that
\[x- \lambda f(x)\in ( I+\lambda\partial q)(x+ u_\gamma(x)),\]
which yields the equality
\begin{equation}\label{EqFB}x+ u_\gamma(x) = (I+\lambda\partial q)^{-1}(x- \lambda f(x)),\end{equation}
because the resolvent $(I+\lambda\partial q)^{-1}$ is single-valued due the maximal monotonicity of $\partial q$. Since this resolvent is also nonexpansive, cf. \cite[Theorem 12.12]{RoWe98}, for arbitrary two points $x,x'\in\R^n$ we obtain the bounds
\begin{align}
\label{EqLip1}  &\norm{(x+u_\gamma(x))-(x'+u_\gamma(x')}\leq \norm{(x-x')-\lambda(f(x)-f(x'))}\leq \norm{x-x'}+\frac 1\gamma\norm{f(x)-f(x')}\\
\label{EqLip2}  &\norm{u_\gamma(x)-u_\gamma(x')}\leq  2\norm{x-x'}+\frac 1\gamma\norm{f(x)-f(x')}.
\end{align}
They will be used in the estimates below.
\begin{remark}
Equation \eqref{EqFB} tells us, that $x+u_\gamma(x)$ is the outcome of one step of the so-called {\em forward-backward splitting method}, see, e.g., \cite{LiMe79}.
\end{remark}

Our approach is based on an equivalent reformulation of \eqref{EqVI2ndK} in form of the GE
\begin{equation}\label{EqVI-alt}0\in\F(x,d):=\myvec{f(x)+\partial q(d)\\x-d}\end{equation}
in variables $(x,d)\in\R^n\times \R^n$. Clearly, $\xb$ is a solution of \eqref{EqVI2ndK} if and only if $(\xb,\xb)$ is a solution of \eqref{EqVI-alt}. Further, it is easy to see that
$\F$ is \ssstar at $((\xb,\xb),(0,0))$ if and only if $\partial q$ is \ssstar at $(\xb,-f(\xb))$.

We start with the description of the approximation step. Given $(x^{(k)},d^{(k)})$ and a scaling parameter $\gamma^{(k)}$, we compute $u^{(k)}:=u_{\gamma^{(k)}}(x^{(k)})$  and  set
\begin{equation}
  \label{EqResApprStep1}\hat x^{(k)}= x^{(k)},\ \hat d^{(k)}=x^{(k)}+u^{(k)}\quad \mbox{and}\quad \hat y^{(k)}=(\hat y_1^{(k)},\hat y_2^{(k)})=(-\gamma^{(k)} u^{(k)},u^{(k)}).
\end{equation}
We observe that
\[((\hat x^{(k)},\hat d^{(k)}),(\hat y_1^{(k)},\hat y_2^{(k)}))\in \gph \F,\]
which follows immediately from the first-order optimality condition \eqref{EqOptCond_u}. Note that the outcome of the approximation step does not depend on the auxiliary variable $d^{(k)}$.  In order to apply Theorem \ref{ThConvSemiSmmooth1}, we shall show the existence of a real $L>0$ such that the estimate
\begin{equation}\label{EqEstApprStep1}
  \norm{((\hat x^{(k)}-\xb, \hat d^{(k)}-\xb), \hat y^{(k)}}\leq L\norm{(x^{(k)}-\xb,d^{(k)}-\xb)},
\end{equation}
corresponding to \eqref{EqBndApprStep}, holds for all $(x^{(k)},d^{(k)})$ with $x^{(k)}$ close to $\xb$. We observe that the left-hand side of \eqref{EqEstApprStep1} amounts to
\begin{align}\nonumber
  \norm{((\hat x^{(k)}-\xb, \hat x^{(k)}+ u^{(k)}-\xb),(-\gamma^{(k)} u^{(k)}, u^{(k)})}&\leq \norm{(\hat x^{(k)}-\xb, \hat x^{(k)}-\xb,0,0)}+\norm{(0, u^{(k)},-\gamma^{(k)} u^{(k)}, u^{(k)})}\\
  \label{EqAuxBnd1}&\leq  2\norm{\hat x^{(k)}-\xb}+(2+\gamma^{(k)})\norm{u^{(k)}}.
\end{align}
Since $u_{\gamma^{(k)}}(\xb)=0$, we obtain from \eqref{EqLip2} the bounds
\begin{align}\label{EqAppr1Lip1}&\norm{\hat d^{(k)}-\xb}\leq \norm{x^{(k)}-\xb}+\frac 1{\gamma^{(k)}}\norm{f(x^{(k)})-f(\xb)}\\
\label{EqAppr1Lip2}&\norm{u^{(k)}}\leq 2\norm{x^{(k)}-\xb}+\frac 1{\gamma^{(k)}}\norm{f(x^{(k)})-f(\xb)}.\end{align}
The latter estimate, together with \eqref{EqAuxBnd1}, imply
\begin{align}\nonumber\norm{((\hat x^{(k)}-\xb, \hat d^{(k)}-\xb), \hat y^{(k)}}&\leq \Big(2+(2+\gamma^{(k)})\big(2+ \frac l{\gamma^{(k)}}\big)\Big)\norm{x^{(k)}-\xb}\\
\label{EqAppr1L}&\leq  \Big(2+(2+\gamma^{(k)})\big(2+ \frac l{\gamma^{(k)}}\big)\Big)\norm{(x^{(k)}-\xb,d^{(k)}-\xb)},\end{align}
where $l$ is the Lipschitz constant of $f$ on a neighborhood of $\xb$. Thus the desired inequality \eqref{EqEstApprStep1} holds, as long as $\gamma^{(k)}$ remains bounded and bounded away from $0$.

Let us now consider the Newton step. By calculus of coderivatives we have for any $s,s^{*} \in \mathbb{R}^{n}$ the  equality
\[D^*\F((\hat x^{(k)},\hat d^{(k)}),\hat y^{(k)})\myvec{s\\s^*}=\myvec{\nabla f(\hat x^{(k)})^Ts+s^*\\D^*(\partial q)(\hat d^{(k)}, \hat d^*{}^{(k)})(s)-s^*},\]
where $\hat d^*{}^{(k)}:=\hat y_1^{(k)}-f(\hat x^{(k)})\in\partial q(\hat d^{(k)})$. Assume that, according to Theorem \ref{ThCoderSubdiff}, we have  a symmetric, positive definite matrix $G^{(k)}$ satisfying $\norm{G^{(k)}}\leq 1$ and
\begin{equation}\label{Eq_G_k}\rge{G^{(k)}}\subseteq \gph D^*(\partial q)(\hat d^{k}, \hat d^*{}^{(k)})\end{equation}
at our disposal. We now choose
\begin{align*}&v_i^*:=\myvec{(I-G^{(k)})e_i\\0},\ u_i^*=\myvec{\nabla f(\hat x^{(k)})^T(I-G^{(k)})e_i\\G^{(k)}e_i},\ i=1,\ldots,n,\\ &v_i^*:=\myvec{0\\e_{i-n}},\ u_i^*=\myvec{e_{i-n}\\-e_{i-n}},\ i=n+1,\ldots,2n,
\end{align*}
so that $(A,B)\in{\cal A}\F((\hat x^{(k)},\hat d^{(k)}),\hat y^{(k)}),$
where
\begin{equation}\label{EQAB_Appr1}A=\left(\begin{array}{cc}(I-G^{(k)})\nabla f(\hat x^{(k)})&G^{(k)}\\I&-I\end{array}\right),\ B=\left(\begin{array}{cc}I-G^{(k)})&0\\0&I\end{array}\right).\end{equation}
Elementary calculations show that
\[A^{-1}=\left(\begin{array}{cc}C^{-1}&G^{(k)}C^{-1}\\C^{-1}&-(I-G^{(k)})\nabla f(\hat x^{(k)})C^{-1}\end{array}\right),\]
provided  the matrix $C:=(I-G^{(k)})\nabla f(\hat x^{(k)})+G^{(k)}$ is nonsingular.  In this case, since $G^{(k)}$ is positive semidefinite and $\norm{G^{(k)}}\leq 1$,  the matrices $A,B$ given by \eqref{EQAB_Appr1} fulfill a bound of the form
\begin{equation}\label{EqBndAB}\norm{A^{-1}}\norm{(A\vdots B)}_F\leq \norm{\big((I-G^{(k)})\nabla f(\hat x^{(k)})+G^{(k)}\big)^{-1}}\big(C_1+C_2\norm{\nabla f(\hat x^{(k)})}\big)^2\end{equation}
with constants $C_1,C_2>0$.

\begin{proposition}\label{PropAinv}
  Assume that
  \begin{equation}\label{EqCondExistNewtonDir}0\in \nabla f(\hat x^{(k)})^Ts +D^*(\partial q)(\hat d^{k}, \hat d^*{}^{(k)})(s)\ \Rightarrow s=0\end{equation}
  Then  $(I-G^{(k)})\nabla f(\hat x^{(k)})+G^{(k)}$ is non-singular and
  \[\norm{((I-G^{(k)})\nabla f(\hat x^{(k)})+G^{(k)})^{-1}}\leq 1+ \frac 1\mu \norm{\nabla f(\hat x^{(k)})-I},\]
  where
  \begin{equation}\label{EqMu}\mu =\min_{\norm{s}=1}\dist{0,\nabla f(\hat x^{(k)})^Ts +D^*(\partial q)(\hat d^{k}, \hat d^*{}^{(k)})(s)}.\end{equation}
\end{proposition}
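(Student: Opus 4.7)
My plan is to prove the non-singularity and the norm bound simultaneously by working with $C^T$ rather than $C$, where $C := (I-G^{(k)})\nabla f(\hat x^{(k)}) + G^{(k)}$. The motivation is a simple algebraic mismatch: the hypothesis \eqref{EqCondExistNewtonDir} and the quantity $\mu$ both involve $\nabla f(\hat x^{(k)})^T$, whereas $C$ features $\nabla f(\hat x^{(k)})$. Exploiting the symmetry $G^{(k)} = (G^{(k)})^T$ guaranteed by Theorem \ref{ThCoderSubdiff}, one obtains
\[C^T = \nabla f(\hat x^{(k)})^T (I - G^{(k)}) + G^{(k)},\]
which meshes naturally with the hypothesis. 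Since $\|C^{-1}\| = \|(C^T)^{-1}\|$ in the spectral norm and the two matrices share invertibility, proving the desired bound for $C^{-T}$ is equivalent to the stated claim. A preliminary compactness/outer-semicontinuity argument applied to the closed-graph mapping $D^*(\partial q)(\hat d^{(k)}, \hat d^{*(k)})$ upgrades \eqref{EqCondExistNewtonDir} to the stronger statement $\mu > 0$, which is required for the bound to make sense.

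The key step is as follows. Fix an arbitrary $y \in \R^n$ and suppose $C^T s = y$; introduce $s' := (I - G^{(k)})s$ so that the equation rewrites as $\nabla f(\hat x^{(k)})^T s' + G^{(k)} s = y$. Applying \eqref{EqRgeG} with $v^* = s$ gives $G^{(k)} s \in D^*(\partial q)(\hat d^{(k)}, \hat d^{*(k)})(s')$, hence
\[y \in \nabla f(\hat x^{(k)})^T s' + D^*(\partial q)(\hat d^{(k)}, \hat d^{*(k)})(s').\]
Because the graph of $D^*(\partial q)$ is a cone, the definition of $\mu$ scales to yield $\mu\|s'\| \leq \|y\|$.

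To finish, write $s = s' + G^{(k)} s = s' + (y - \nabla f(\hat x^{(k)})^T s') = y + (I - \nabla f(\hat x^{(k)})^T)s'$ and apply the triangle inequality, using the identity $\|I - \nabla f(\hat x^{(k)})^T\| = \|\nabla f(\hat x^{(k)}) - I\|$, to obtain $\|s\| \leq (1 + \|\nabla f(\hat x^{(k)}) - I\|/\mu)\|y\|$. Setting $y = 0$ establishes injectivity of $C^T$, hence non-singularity of both $C^T$ and $C$; taking the supremum over $\|y\| = 1$ delivers the claimed norm bound. The main obstacle is recognizing that one should switch from $C$ to $C^T$ in order to align with the transposition in \eqref{EqCondExistNewtonDir}; once that observation is made, everything else reduces to a short algebraic manipulation combined with direct appeals to \eqref{EqRgeG} and the definition of $\mu$.
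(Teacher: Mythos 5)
Your proposal is correct and follows essentially the same route as the paper: pass to the transpose $\nabla f(\hat x^{(k)})^T(I-G^{(k)})+G^{(k)}$ via the symmetry of $G^{(k)}$, use \eqref{Eq_G_k} to place $G^{(k)}s$ in $D^*(\partial q)(\hat d^{(k)},\hat d^{*(k)})((I-G^{(k)})s)$, and invoke the definition (and positivity) of $\mu$ together with $\norm{\nabla f(\hat x^{(k)})^T-I}=\norm{\nabla f(\hat x^{(k)})-I}$. Your direct estimate $s=y+(I-\nabla f(\hat x^{(k)})^T)s'$ with the triangle inequality merely streamlines the paper's two-case argument into one line, yielding the identical lower bound $\norm{C^Ts}\geq \frac{\mu}{\mu+\norm{\nabla f(\hat x^{(k)})-I}}\norm{s}$.
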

\begin{proof}
  By the definition of $\mu$ we have for every $u\in\R^n$ the estimate
  \begin{align*}\norm{\nabla f(\hat x^{(k)})^T(I-G^{(k)})u +G^{(k)}u} &\geq  \dist{0,\nabla f(\hat x^{(k)})^T(I-G^{(k)})u +D^*(\partial q)(\hat d^{k}, \hat d^*{}^{(k)})((I-G^{(k)})u)}\\
  &\geq \mu\norm{(I-G^{(k)})u}\end{align*}
  implying
  \[\norm{\nabla f(\hat x^{(k)})^T(I-G^{(k)})u +G^{(k)}u}\geq \frac \mu{\mu+\norm{\nabla f(\hat x^{(k)})-I}}\norm{u}\]
  whenever $\norm{u}\leq \norm{(I-G^{(k)})u}(\mu+\norm{\nabla f(\hat x^{(k)})-I})$. On the other hand, if $\norm{u}> \norm{(I-G^{(k)})u}(\mu+\norm{\nabla f(\hat x^{(k)})-I})$, then
   \begin{align*}\norm{\nabla f(\hat x^{(k)})^T(I-G^{(k)})u +G^{(k)}u}&=\norm{u+(\nabla f(\hat x^{(k)})^T-I)(I-G^{(k)})u}\\
   &\geq \norm{u}-\norm{\nabla f(\hat x^{(k)})^T-I}\norm{(I-G^{(k)})u}>\norm{u}- \frac{\norm{\nabla f(\hat x^{(k)})^T-I}}{\mu+\norm{\nabla f(\hat x^{(k)})-I}}\norm{u}\\
   &=\frac\mu{\mu+\norm{\nabla f(\hat x^{(k)})-I}}\norm{u},\end{align*}
   where we have taken into account that $\norm{\nabla f(\hat x^{(k)})^T-I}=\norm{\nabla f(\hat x^{(k)})-I}$. Hence,
   \[\norm{(\nabla f(\hat x^{(k)})^T(I-G^{(k)}) +G^{(k)})u}\geq \frac\mu{\mu+\norm{\nabla f(\hat x^{(k)})-I}}\norm{u}\quad
   \forall u\]
   and
   \[\norm{((I-G^{(k)})\nabla f(\hat x^{(k)})+G^{(k)})^{-1}}=\norm{(\nabla f(\hat x^{(k)})^T(I-G^{(k)}) +G^{(k)})^{-1}}\leq \frac {\mu+\norm{\nabla f(\hat x^{(k)})-I)}}\mu\]
   follows.
\end{proof}

In order to actually perform the Newton step, we denote by $(\Delta x^{(k)}, \Delta d^{(k)})$ the solution of  the linear system
\[A\myvec{\Delta x\\\Delta d}=
\left(\begin{array}{cc}(I-G^{(k)})\nabla f(\hat x^{(k)})&G^{(k)}\\I&-I\end{array}\right)\myvec{\Delta x\\\Delta d}
=-B\hat y^{(k)}=\myvec{\gamma^{(k)}(I-G^{(k)})u^{(k)}\\ u^{(k)}}\]
and set $x^{(k+1)}:=\hat x^{(k)}+\Delta x^{(k)}$.
Note that the variable $\Delta d$  can be easily eliminated from the system above yielding
\begin{equation}\label{EqNewtonStepAppr1}
\begin{split}
&((I-G^{(k)})\nabla f(\hat x^{(k)})+G^{(k)})\Delta x^{(k)}= (\gamma^{(k)}(I-G^{(k)})+G^{(k)})u^{(k)},  \\
&x^{(k+1)}=d^{(k+1)}=x^{(k)}+\Delta x^{(k)}.
\end{split}
\end{equation}

We now present sufficient conditions  for the fulfilment of condition \eqref{EqCondExistNewtonDir}. Note that by Theorem \ref{ThMetrReg}, condition \eqref{EqCondExistNewtonDir} is fulfilled if and only if the mapping $H_{u^{(k)}}:\R^n\tto\R^n$, defined by  $H_{u^{(k)}}(x):=f(x)+\partial q(x+u^{(k)})$, is metrically regular around $(x^{(k)},\hat y_1^{(k)})$.
\begin{lemma}\label{LemBndInvMetrReg}Assume that $H$ is metrically regular around $(\xb,0)\in \gph H$ and  let two positive real numbers $\underline\gamma\leq \bar\gamma$ be given.
 Then for every $\kappa'>{\rm reg\;} H(\xb,0)$ there exists some positive radius $\rho'$ such that for every $x^{(k)}\in\B_{\rho'}(\xb)$ and every $\gamma^{(k)}\in[\underline{\gamma},\bar\gamma]$ one has
\[\dist{0, \nabla f(\hat x^{(k)})^Ts+D^*(\partial q)(\hat d^{(k)},\hat d^*{}^{(k)})(s)}\geq \Big(\frac 1{\kappa'}-\norm{\nabla f(\hat x^{(k)})-\nabla f(\hat d^{(k)})}\Big)\norm{s}
  \geq \frac 1{2\kappa'}\norm{s}\ \forall s\]
and consequently
\[\norm{((I-G^{(k)})\nabla f(\hat x^{(k)})+G^{(k)})^{-1}}\leq 1+ 2\kappa'\norm{\nabla f(\hat x^{(k)})-I}\]
and
\[\norm{\Delta x^{(k)}}\leq \Big(1+2\kappa'\norm{\nabla f(\hat x^{(k)})-I}\Big)\max\{1,\gamma^{(k)}\}\norm{u^{(k)}}.\]
\end{lemma}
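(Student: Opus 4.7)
The plan is to reduce the three inequalities to the Mordukhovich criterion \eqref{EqMoCrit}--\eqref{EqModMetrReg} applied to $H$ at a nearby graph point, and then to apply Proposition~\ref{PropAinv} to transfer the bound to the matrix $(I-G^{(k)})\nabla f(\hat x^{(k)})+G^{(k)}$. The key observation is that although the distance we want to bound involves $\nabla f(\hat x^{(k)})$ and $D^\ast(\partial q)$ taken at $\hat d^{(k)}$, these two evaluation points differ only by $u^{(k)}$, which is small.

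First, I would fix $\kappa'>{\rm reg}\,H(\xb,0)$ and use Theorem~\ref{ThMetrReg} together with the fact that metric regularity is stable under small perturbations of the reference point along the graph: there is a neighborhood $U\times V$ of $(\xb,0)$ such that for every $(x',y')\in\gph H\cap(U\times V)$ and every $s\in\R^n$,
\[\dist{0,D^\ast H(x',y')(s)}\geq \|s\|/\kappa'.\]
Using the standard sum rule $D^\ast H(x',y')(s)=\nabla f(x')^Ts+D^\ast(\partial q)(x',y'-f(x'))(s)$, I would specialize this to the graph point $(\hat d^{(k)},\tilde y^{(k)})$ where $\tilde y^{(k)}:=f(\hat d^{(k)})+\hat d^{\ast(k)}$. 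One checks that $\tilde y^{(k)}-f(\hat d^{(k)})=\hat d^{\ast(k)}\in\partial q(\hat d^{(k)})$, so this is indeed on $\gph H$, and the chain rule yields
\[\dist{0,\nabla f(\hat d^{(k)})^Ts+D^\ast(\partial q)(\hat d^{(k)},\hat d^{\ast(k)})(s)}\geq\|s\|/\kappa'.\]
To guarantee that $(\hat d^{(k)},\tilde y^{(k)})\in U\times V$ uniformly for $\gamma^{(k)}\in[\underline\gamma,\bar\gamma]$, I would invoke \eqref{EqAppr1Lip1}--\eqref{EqAppr1Lip2}: as $x^{(k)}\to\xb$, $\hat d^{(k)}\to\xb$ and $u^{(k)}\to 0$ uniformly in $\gamma^{(k)}\in[\underline\gamma,\bar\gamma]$, and then $\tilde y^{(k)}=f(\hat d^{(k)})-f(\hat x^{(k)})-\gamma^{(k)}u^{(k)}\to 0$ uniformly as well, because $\gamma^{(k)}\leq\bar\gamma$.

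Now I would pass from $\nabla f(\hat d^{(k)})$ to $\nabla f(\hat x^{(k)})$ by the trivial triangle estimate
\[\dist{0,\nabla f(\hat x^{(k)})^Ts+D^\ast(\partial q)(\hat d^{(k)},\hat d^{\ast(k)})(s)}\geq\dist{0,\nabla f(\hat d^{(k)})^Ts+D^\ast(\partial q)(\hat d^{(k)},\hat d^{\ast(k)})(s)}-\|\nabla f(\hat x^{(k)})-\nabla f(\hat d^{(k)})\|\|s\|,\]
which gives the first claimed inequality. The second lower bound $\|s\|/(2\kappa')$ follows by shrinking $\rho'$ so that $\|\hat d^{(k)}-\hat x^{(k)}\|=\|u^{(k)}\|$ is small enough that the continuity of $\nabla f$ forces $\|\nabla f(\hat x^{(k)})-\nabla f(\hat d^{(k)})\|\leq 1/(2\kappa')$ --- and here again uniformity in $\gamma^{(k)}\in[\underline\gamma,\bar\gamma]$ comes from \eqref{EqAppr1Lip2}.

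With the lower bound $\mu\geq 1/(2\kappa')$ (in the notation of \eqref{EqMu}) in hand, the second inequality is an immediate application of Proposition~\ref{PropAinv}. For the third inequality, I would read off from the first line of \eqref{EqNewtonStepAppr1} the identity
\[\Delta x^{(k)}=\bigl((I-G^{(k)})\nabla f(\hat x^{(k)})+G^{(k)}\bigr)^{-1}\bigl(\gamma^{(k)}(I-G^{(k)})+G^{(k)}\bigr)u^{(k)}\]
and estimate the two factors separately. The first factor is bounded by the just-established second inequality. For the second factor I would use that $G^{(k)}$ is symmetric positive semidefinite with $\|G^{(k)}\|\leq 1$, so $G^{(k)}$ and $I-G^{(k)}$ commute and share an orthonormal eigenbasis with eigenvalues $\lambda_i\in[0,1]$ and $1-\lambda_i$. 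Therefore the eigenvalues of $\gamma^{(k)}(I-G^{(k)})+G^{(k)}$ are $\gamma^{(k)}+(1-\gamma^{(k)})\lambda_i\in[\min\{1,\gamma^{(k)}\},\max\{1,\gamma^{(k)}\}]$, giving $\|\gamma^{(k)}(I-G^{(k)})+G^{(k)}\|\leq\max\{1,\gamma^{(k)}\}$, which yields the claimed bound on $\|\Delta x^{(k)}\|$.

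The main obstacle is the bookkeeping in the first step: one has to evaluate $D^\ast H$ at a carefully chosen nearby graph point in order to apply metric regularity, and then argue that this evaluation point depends continuously on $x^{(k)}$ uniformly in $\gamma^{(k)}$. Once this is done, the remainder is a direct application of Proposition~\ref{PropAinv} and elementary linear algebra on the simultaneously diagonalizable matrices $G^{(k)}$ and $I-G^{(k)}$.
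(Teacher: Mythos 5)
Your proposal is correct and follows essentially the same route as the paper's proof: metric regularity of $H$ near $(\xb,0)$ combined with Theorem~\ref{ThMetrReg} and the coderivative sum rule gives the bound at the graph point $(\hat d^{(k)},f(\hat d^{(k)})+\hat d^{*(k)})$, the estimates \eqref{EqAppr1Lip1}--\eqref{EqAppr1Lip2} keep that point in the relevant neighborhood uniformly in $\gamma^{(k)}\in[\underline\gamma,\bar\gamma]$, the perturbation term $\norm{\nabla f(\hat x^{(k)})-\nabla f(\hat d^{(k)})}$ is absorbed by shrinking the radius, and the final two bounds follow from Proposition~\ref{PropAinv} and the spectral estimate $\norm{\gamma^{(k)}(I-G^{(k)})+G^{(k)}}\leq\max\{1,\gamma^{(k)}\}$, exactly as in the paper (which states this last norm bound without the eigenvalue detail you supply).
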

\begin{proof}Let $\kappa'>{\rm reg\;} H(\xb,0)$ be arbitrarily fixed. We can find  a positive radius $\rho>0$ such that
  \[\dist{x, H^{-1}(y)}\leq \kappa' \dist{y,H(x)}\ \forall (x,y)\in\B_\rho(\xb)\times \B_\rho(0).\]
  Hence, by Theorem \ref{ThMetrReg}, for every $(x,y)\in\gph H\cap (\inn \B_\rho(\xb)\times \inn \B_\rho(0))$ we have
  \[\dist{0, \nabla f(x)^Ts+D^*(\partial q)(x,y-f(x))(s)}\geq \frac 1{\kappa'}\norm{s}\ \forall s.\]
  We can choose $\rho$ small enough such that
  \[\norm{\nabla f(x)-\nabla f(d)}\leq \frac 1{2\kappa'}\ \forall x,d\in\B_\rho(\xb).\]
  Let $l$ denote the Lipschitz constant of $f$ on $\B_\rho(\xb)$ and choose $\rho'>0$ such that
  \[(\gamma+l)(2+\frac l\gamma)\rho'<\rho,\quad (1+\frac l\gamma)\rho'<\rho\quad \forall \gamma\in[\underline{\gamma},\bar\gamma].\]
  Consider $(x^{(k)},d^{(k)})\in \B_{\rho'}(\xb)\times\R^n$ and $\gamma^{(k)}\in[\underline{\gamma},\bar\gamma]$. By \eqref{EqAppr1Lip1} we have $\norm{\hat d^{(k)}-\xb}\leq (1+\frac l{\gamma^{(k)}})\rho'<\rho$. Further,
  $f(\hat d^{(k)})+\hat d^*{}^{(k)}=f(\hat d^{(k)})-\gamma^{(k)} u^{(k)}-f(\hat x^{(k)})\in H(\hat d^{(k)})$ and
  \[\norm{f(\hat d^{(k)})+\hat d^*{}^{(k)}}\leq (l+\gamma^{(k)})\norm{u^{(k)}}\leq (l+\gamma^{(k)})(2+\frac l{\gamma^{(k)}})\rho'<\rho,\]
  where we have used \eqref{EqAppr1Lip2}. Thus
 \[\dist{0, \nabla f(\hat d^{(k)})^Ts+D^*(\partial q)(\hat d^{(k)},\hat d^*{}^{(k)})(s)}\geq \frac 1{\kappa'}\norm{s}\quad \forall s\]
  implying
  \[\dist{0, \nabla f(\hat x^{(k)})^Ts+D^*(\partial q)(\hat d^{(k)},\hat d^*{}^{(k)})(s)}\geq \Big(\frac 1{\kappa'}-\norm{\nabla f(\hat x^{(k)})-\nabla f(\hat d^{(k)})}\Big)\norm{s}
  \geq \frac 1{2\kappa'}\norm{s}\quad \forall s.\]
  Hence we can apply Proposition \ref{PropAinv} to obtain
  \[\norm{((I-G^{(k)})\nabla f(\hat x^{(k)})+G^{(k)})^{-1}}\leq 1+ 2\kappa'\norm{\nabla f(\hat x^{(k)})-I}.\]
  The estimate for $\norm{\Delta x^{(k)}}$ follows from \eqref{EqNewtonStepAppr1} by taking into account that
 $\norm{(\gamma^{(k)}(I-G^{(k)})+G^{(k)})u^{(k)}}\leq \max\{1,\gamma^{(k)}\}\norm{u^{(k)}}$ because $G^{(k)}$ is symmetric and positive semidefinite with $\norm{G^{(k)}}\leq 1$.
\end{proof}

Under an additional condition we can give an estimate for the constant $\mu$ defined by \eqref{EqMu} and for the length of the Newton direction $\norm{\Delta x^{(k)}}$.
\begin{lemma}\label{LemBndInv}
  Assume that $f$ is monotone. Given  $x\in\R^n$ and $d\in\dom\partial q$, consider the numbers
  \begin{align}\label{EqMu_f}&\mu_f(x):=\min\{\skalp{\nabla f(x)u,u}\mv \norm{u}=1\},\\
  \label{EqMu_q} &\mu_q(d):=\lim_{\rho \downarrow 0}\inf\{\frac{\skalp{d_1^*-d_2^*, d_1-d_2}}{\norm{d_1-d_2}^2}\mv d_i\in\B_\rho(d), (d_i,d_i^*)\in\gph\partial q,\ i=1,2, d_1\not=d_2\}.\end{align}
  Then for every $d^*\in \partial q(d)$ we have
  \[\min_{\norm{s}=1}\dist{0,\nabla f(x)^Ts +D^*(\partial q)( d, d^*)(s)}\geq \mu_f(x)+\mu_q(d).\]
 \end{lemma}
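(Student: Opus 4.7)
Take arbitrary $s\in\R^n$ with $\norm{s}=1$ and $w^*\in D^*(\partial q)(d,d^*)(s)$. Dotting $\nabla f(x)^Ts+w^*$ with the unit vector $s$ and using Cauchy--Schwarz yields
\[\norm{\nabla f(x)^Ts+w^*}\geq \skalp{\nabla f(x)^Ts+w^*,s}=\skalp{s,\nabla f(x)s}+\skalp{w^*,s}\geq \mu_f(x)+\skalp{w^*,s},\]
so the lemma reduces to the purely subdifferential inequality
\[\skalp{w^*,s}\geq \mu_q(d)\norm{s}^2\quad\text{for every }(s,w^*)\in\gph D^*(\partial q)(d,d^*).\]

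To prove this, fix $\mu<\mu_q(d)$; by definition of $\mu_q(d)$, $\partial q$ is $\mu$-strongly monotone on some neighborhood of $d$, equivalently the resolvent $J:=(I+\partial q)^{-1}$ satisfies the enhanced firm non-expansiveness $\skalp{J(u)-J(u'),u-u'}\geq (1+\mu)\norm{J(u)-J(u')}^2$ for $u,u'$ near $d+d^*$, and is in particular locally $\tfrac{1}{1+\mu}$-Lipschitz there. I would then use the Minty parametrization $\phi(u)=(J(u),u-J(u))$ of $\gph \partial q$ to approximate $(s,w^*)$ by Fr\'echet-normal elements $(w_k^*,-s_k)\in\widehat N_{\gph\partial q}(d_k,d_k^*)$ at nearby graph points, pushing the local bound on $J$ through the Fr\'echet-normal inequality.

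The step I expect to be the main technical obstacle is converting the local property of $J$ into a sharp lower bound on $\skalp{w^*,s}$. My preferred route is via the orthogonal change of coordinates $\Psi(x,y):=\bigl((x+y)/\sqrt 2,\,(y-x)/\sqrt 2\bigr)$. Under $\Psi$, $\gph\partial q$ becomes the graph of the globally $1$-Lipschitz map $F(p):=p-\sqrt 2\,J(\sqrt 2\,p)$, and the enhanced firm non-expansiveness is equivalent to the shifted map $F-\tfrac{\mu}{1+\mu}\,\mathrm{id}$ being locally $\tfrac{1}{1+\mu}$-Lipschitz near $\Psi(d,d^*)$. A limiting normal $(w^*,-s)\in N_{\gph\partial q}(d,d^*)$ is transformed by $\Psi$ into the normal $\bigl((w^*-s)/\sqrt 2,\,-(w^*+s)/\sqrt 2\bigr)$ of $\gph F$, and by a subsequent shear removing the linear term into a limiting normal $(\gamma,\delta)$ of the graph of a locally $\tfrac{1}{1+\mu}$-Lipschitz function. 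For such normals the standard bound $\norm{\gamma}\leq\tfrac{1}{1+\mu}\norm{\delta}$ holds (by approximation through points of differentiability available via Rademacher's theorem); squaring this inequality and expressing everything back in terms of $(w^*,s)$ simplifies exactly to $\skalp{w^*,s}\geq \mu\norm{s}^2$. Letting $\mu\uparrow\mu_q(d)$ completes the proof.
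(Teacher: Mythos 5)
Your proposal is correct, and it takes a genuinely different route from the paper. The paper's proof works with the sum directly: it localizes around $x$, forms the auxiliary mapping $H^\rho_{d-x}(x')=f(x')+\partial q(x'+(d-x))+N_{\B_\rho(x)}(x')$, invokes \cite[Exercise 12.45]{RoWe98} for maximal monotonicity, observes that this mapping is strongly monotone with constant $\mu_f(x)+\mu_q(d)-\epsilon$, hence strongly metrically regular, and then reads off the coderivative distance bound from the Mordukhovich criterion \cite[Theorem 9.43]{RoWe98}, identifying $D^*H^\rho_{d-x}(x,f(x)+d^*)(s)=\nabla f(x)^Ts+D^*(\partial q)(d,d^*)(s)$ since $N_{\B_\rho(x)}(x)=\{0\}$. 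You instead peel off $f$ by Cauchy--Schwarz (which uses only the definition of $\mu_f(x)$, not even monotonicity of $f$) and reduce everything to the pointwise normal-cone inequality $\skalp{w^*,s}\geq\mu_q(d)\norm{s}^2$ for $(w^*,-s)\in N_{\gph\partial q}(d,d^*)$, which you establish via the Minty parametrization, the rotation $\Psi$, the shear removing $\tfrac{\mu}{1+\mu}\,\mathrm{id}$, and the standard coderivative estimate for locally Lipschitz maps; I checked that your equivalence between local $\mu$-strong monotonicity and the local $\tfrac1{1+\mu}$-Lipschitz property of the sheared map is correct, and that squaring $\norm{w^*-(1+2\mu)s}\leq\norm{w^*+s}$ does simplify exactly to $\skalp{w^*,s}\geq\mu\norm{s}^2$, so the plan closes. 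What each approach buys: the paper's argument is shorter because it leans on off-the-shelf results about strongly monotone maximal monotone mappings and metric regularity, but it needs monotonicity of $f$ (for maximal monotonicity of the localized sum) and only delivers the distance estimate; yours delivers the stronger pointwise coderivative inequality characterizing local strong monotonicity of $\partial q$ (interesting in its own right), avoids the auxiliary localized mapping and the regularity machinery, but requires carrying out the Minty/rotation/shear bookkeeping explicitly. Two small remarks: the appeal to Rademacher is unnecessary, since the Fr\'echet coderivative bound for a locally Lipschitz function follows directly from the definition and then passes to limiting normals; and when approximating $(w^*,-s)$ by Fr\'echet normals at graph points $(d_k,d_k^*)\to(d,d^*)$, note that the Minty parameters $d_k+d_k^*$ converge to $d+d^*$, so these points eventually lie in the region where your improved Lipschitz constant is valid -- worth one sentence in a full write-up.
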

 \begin{proof}
   If $\mu_f(x)+\mu_q(d)=0$ the assertion trivially holds true. Hence we may assume $\mu_f(x)+\mu_q(d)>0$. Consider $0<\epsilon< \mu_f+\mu_q$ and pick some $\rho>0$ such that
   \begin{align*}&\inf_{x'\in\B_\rho(x)} \min\{\skalp{\nabla f(x')u,u}\mv \norm{u}=1\}\geq \mu_f(x)-\frac\epsilon2,\\
   &\inf\{\frac{\skalp{d_1^*-d_2^*, d_1-d_2}}{\norm{d_1-d_2}^2}\mv d_i\in\B_\rho(d), (d_i,d_i^*)\in\gph\partial q,\ i=1,2,\ d_1\not=d_2\}\geq \mu_q(d)-\frac \epsilon2.
   \end{align*}
   Utilizing \cite[Exercise 12.45]{RoWe98} we see that the mapping $H^\rho_{d-x}:\R^n\tto \R^n$, defined by $H^\rho_{d-x}(x'):=f(x')+\partial q(x'+(d-x))+N_{\B_\rho(x)}(x')$ is maximally monotone. Further, by construction the mapping $H^\rho_{d-x}$ is strongly monotone with constant $\mu_f(x)+\mu_g(d)-\epsilon$ and therefore its inverse is single valued and Lipschitzian on $\R^n$ with constant $1/(\mu_f(x)+\mu_g(d)-\epsilon)$. But this implies that $H^\rho_{d-x}$ is (strongly) metrically regular around every point $(x', x'^*)$ of its graph and by \cite[Theorem 9.43]{RoWe98} we obtain
   \[\min_{\norm{s}=1}\dist{0, D^*H^\rho_{d-x}(x',x'^*)(s)}\geq \mu_f(x)+\mu_g(d)-\epsilon.\]
   Taking into account that $N_{\B_\rho(x)}(x)=\{0\}$, for every $d^*\in\partial q(d)$ we have $D^*H^\rho_{d-x}(x,f(x)+d^*)(s)=\nabla f(x)^Ts+D^*(\partial q)(d,d^*)(s)$ and since we can choose $\epsilon>0$ arbitrarily small, the assertion follows.
 \end{proof}
 \begin{corollary}\label{CorExistNewtonDir}
   Assume that $f$ is monotone and assume that either $f$ or $\partial q$ is strongly monotone. Then for every iterate $x^{(k)}\in\R^n$ and every scaling parameter $\gamma^{(k)}>0$ the new iterate $x^{(k+1)}$ given by \eqref{EqResApprStep1} and \eqref{EqNewtonStepAppr1} is well defined. Moreover,
   \[\norm{\Delta x^{(k)}}\leq \Big(1+\frac1{\mu_f+\mu_q}\norm{\nabla f(\hat x^{(k)})-I}\Big)\max\{1,\gamma^{(k)}\}\norm{u^{(k)}},\]
   where $\mu_f:=\inf_{x\in\R^n} \mu_f(x)$, $\mu_q:=\inf_{d\in\dom\partial q}\mu_q(d)$.
 \end{corollary}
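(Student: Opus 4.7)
The plan is to chain together Lemma \ref{LemBndInv} (which provides a pointwise lower bound for the distance to $0$ in terms of the monotonicity moduli $\mu_f(x)$ and $\mu_q(d)$), Proposition \ref{PropAinv} (which turns such a bound into invertibility plus a norm estimate for the inverse), and the explicit Newton update \eqref{EqNewtonStepAppr1}. The assumptions we are given are exactly what is needed to upgrade pointwise monotonicity to a uniform, globally positive value of $\mu_f+\mu_q$.

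First I would argue that $u^{(k)}=u_{\gamma^{(k)}}(x^{(k)})$ is well defined (as noted just after \eqref{Eq_u_gamma}, the objective in \eqref{Eq_u_gamma} is strongly convex whenever $\gamma^{(k)}>0$) and so $\hat x^{(k)},\hat d^{(k)},\hat y^{(k)}$ in \eqref{EqResApprStep1} and the associated $\hat d^*{}^{(k)}\in\partial q(\hat d^{(k)})$ are well defined. Then I would observe that monotonicity of $f$ gives $\mu_f(x)\geq 0$ for every $x$ and convexity of $q$ gives $\mu_q(d)\geq 0$ for every $d\in\dom\partial q$; under the extra strong monotonicity hypothesis one of these two infima is strictly positive, so $\mu_f+\mu_q>0$. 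Applying Lemma \ref{LemBndInv} at $(\hat x^{(k)},\hat d^{(k)},\hat d^*{}^{(k)})$ yields
\[\min_{\norm{s}=1}\dist{0,\nabla f(\hat x^{(k)})^Ts+D^*(\partial q)(\hat d^{(k)},\hat d^*{}^{(k)})(s)}\geq \mu_f+\mu_q>0,\]
so in particular condition \eqref{EqCondExistNewtonDir} holds and the constant $\mu$ of \eqref{EqMu} satisfies $\mu\geq \mu_f+\mu_q$.

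Next, Proposition \ref{PropAinv} applies and gives nonsingularity of $(I-G^{(k)})\nabla f(\hat x^{(k)})+G^{(k)}$ together with
\[\Norm{\bigl((I-G^{(k)})\nabla f(\hat x^{(k)})+G^{(k)}\bigr)^{-1}}\leq 1+\frac{1}{\mu_f+\mu_q}\norm{\nabla f(\hat x^{(k)})-I}.\]
Consequently $\Delta x^{(k)}$ is uniquely determined by the first equation of \eqref{EqNewtonStepAppr1}, and therefore so is $x^{(k+1)}=d^{(k+1)}=\hat x^{(k)}+\Delta x^{(k)}$.

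Finally, to obtain the stated norm bound on $\Delta x^{(k)}$, I would multiply the inverse-norm estimate above by the norm of the right-hand side $(\gamma^{(k)}(I-G^{(k)})+G^{(k)})u^{(k)}$. Because $G^{(k)}$ is symmetric and positive semidefinite with $\norm{G^{(k)}}\leq 1$, the matrix $\gamma^{(k)}(I-G^{(k)})+G^{(k)}$ is a convex combination (with coefficients $\gamma^{(k)}$ and $1$ on spectral components) of the identity and itself scaled by $\gamma^{(k)}$, so its operator norm is at most $\max\{1,\gamma^{(k)}\}$; this is exactly the argument used at the end of the proof of Lemma \ref{LemBndInvMetrReg}. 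Combining these two inequalities produces the asserted estimate. There is no genuine obstacle here; the only mild subtlety is verifying that the strong monotonicity of \emph{either} $f$ \emph{or} $\partial q$ (not both) suffices to make $\mu_f+\mu_q$ strictly positive, which is immediate because the other term is still $\geq 0$ by ordinary monotonicity/convexity.
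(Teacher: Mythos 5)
Your proposal is correct and follows essentially the same route as the paper: establish $\mu_f+\mu_q>0$ from the monotonicity assumptions, then chain Lemma \ref{LemBndInv}, Proposition \ref{PropAinv} and formula \eqref{EqNewtonStepAppr1}, using $\norm{(\gamma^{(k)}(I-G^{(k)})+G^{(k)})u^{(k)}}\leq\max\{1,\gamma^{(k)}\}\norm{u^{(k)}}$ for the final bound. The paper's proof is just a terser version of the same argument.
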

 \begin{proof}Note that $\mu_f = \inf_{x_1\not=x_2}\frac{\skalp{f(x_1)-f(x_2),x_1-x_2}}{\norm{x_1-x_2}^2}$ and
 \[\mu_q\geq \inf\{\frac{\skalp{d_1^*-d_2^*, d_1-d_2}}{\norm{d_1-d_2}^2}\mv (d_i,d_i^*)\in\gph\partial q,\ i=1,2,\ d_1\not=d_2\}.\]
 Thus $\mu_f+\mu_q>0$ and the assertion follows from Lemma \ref{LemBndInv}, Proposition \ref{PropAinv} and formula \eqref{EqNewtonStepAppr1}
 \end{proof}

We now prove locally superlinear convergence of the \ssstar Newton method. We restrict ourselves to the special case when   $\gamma^{(k)}$ is constant.

\begin{theorem}\label{ThAppr1Conv}
  Assume that the mapping $H=f+\partial q$  is metrically regular at $(\xb,0)\in\gph H$ and assume that $\partial q$ is \ssstar at $(\xb,-f(\xb))$. Then for every positive number $\gamma$  there exists a neighborhood $U$ of $\xb$  such that for every starting point $x^{(0)}\in U$ the  \ssstar Newton method of Algorithm \ref{AlgNewton} with $\gamma^{(k)}=\gamma$ for all $k$, with approximation step \eqref{EqResApprStep1} and with Newton step given by \eqref{EqNewtonStepAppr1}, converges superlinearly to $\xb$.
\end{theorem}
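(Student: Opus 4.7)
The plan is to apply Theorem \ref{ThConvSemiSmmooth1} to the reformulated inclusion $0\in \F(x,d)$ from \eqref{EqVI-alt}. Two ingredients must be checked: first, that $\F$ is \ssstar at $((\xb,\xb),(0,0))$, and second, that for some $L,\kappa>0$ the set $\G^{L,\kappa}_{\F,(\xb,\xb)}(x^{(k)},d^{(k)})$ is non-empty for every $(x^{(k)},d^{(k)})$ not already a solution but sufficiently close to $(\xb,\xb)$. The first ingredient is handed to us for free by the equivalence recorded just after \eqref{EqVI-alt} together with the semismoothness$^{*}$ hypothesis on $\partial q$.

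For the second ingredient I would take the scheme \eqref{EqResApprStep1}--\eqref{EqNewtonStepAppr1} itself as a concrete witness. The bound on $\|(\hat x^{(k)}-\xb,\hat d^{(k)}-\xb,\hat y^{(k)})\|$ required by \eqref{EqBndApprStep} has already been derived in \eqref{EqAppr1L}: with $l$ a Lipschitz constant of $f$ on some neighborhood of $\xb$ and $\gamma^{(k)}\equiv \gamma$, one may set $L:=2+(2+\gamma)(2+l/\gamma)$, and this $L$ works uniformly provided $x^{(k)}$ stays in that neighborhood. Note that the approximation step never consults $d^{(k)}$, so the trivial inequality $\|x^{(k)}-\xb\|\le \|(x^{(k)}-\xb,d^{(k)}-\xb)\|$ is enough to cover the joint norm on the right-hand side of \eqref{EqEstApprStep1}.

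For the bound on $\|A^{-1}\|\,\|(A\vdots B)\|_F$ I would invoke Lemma \ref{LemBndInvMetrReg} with the degenerate interval $[\underline{\gamma},\bar\gamma]=[\gamma,\gamma]$. Fixing any $\kappa'>{\rm reg\;}H(\xb,0)$, the lemma produces a radius $\rho'>0$ such that for every $x^{(k)}\in\B_{\rho'}(\xb)$ the matrix $(I-G^{(k)})\nabla f(\hat x^{(k)})+G^{(k)}$ is non-singular with
\[\|((I-G^{(k)})\nabla f(\hat x^{(k)})+G^{(k)})^{-1}\|\le 1+2\kappa'\|\nabla f(\hat x^{(k)})-I\|.\]
Plugging this into \eqref{EqBndAB} and using that $\nabla f$ stays bounded on $\B_{\rho'}(\xb)$, one gets a uniform constant $\kappa$ such that $(A,B)\in {\cal A}_{\rm reg}\F((\hat x^{(k)},\hat d^{(k)}),\hat y^{(k)})$ with $\|A^{-1}\|\|(A\vdots B)\|_F\le \kappa$. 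Consequently $\G^{L,\kappa}_{\F,(\xb,\xb)}(x^{(k)},d^{(k)})\ne \emptyset$ throughout the relevant neighborhood, and Theorem \ref{ThConvSemiSmmooth1} yields the claimed local superlinear convergence of $(x^{(k)},d^{(k)})$ to $(\xb,\xb)$, hence of $x^{(k)}$ to $\xb$.

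The only real obstacle is bookkeeping: the neighborhood $U$ must be chosen small enough simultaneously for the Lipschitz constant $l$ entering \eqref{EqAppr1L}, for the radius $\rho'$ of Lemma \ref{LemBndInvMetrReg}, and for the threshold hidden inside the conclusion of Theorem \ref{ThConvSemiSmmooth1}. One should also observe that the auxiliary variable $d^{(k)}$ is genuinely free only at start-up (the approximation step ignores it), while from $k\ge 1$ on the Newton step enforces $d^{(k)}=x^{(k)}$, so the two components march in lock-step and the single-variable distance $\|x^{(k)}-\xb\|$ controls all the estimates.
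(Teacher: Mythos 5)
Your argument is correct and follows essentially the same route as the paper's own proof: both apply Theorem \ref{ThConvSemiSmmooth1} to the reformulation \eqref{EqVI-alt}, take $L=2+(2+\gamma)(2+l/\gamma)$ from \eqref{EqAppr1L}, and obtain the uniform bound $\kappa$ by combining Lemma \ref{LemBndInvMetrReg} (with $\underline\gamma=\bar\gamma=\gamma$) with \eqref{EqBndAB}, noting that $d^{(k)}$ plays no role in the approximation step. No gaps.
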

\begin{proof}
Fix $\kappa'>{\rm reg\;} H(\xb,0)$, set $\underline \gamma =\bar\gamma =\gamma$ and determine $\rho'>0$ according to Lemma \ref{LemBndInvMetrReg}. Denoting by $l$  the Lipschitz constant of $f$ on $\B_{\rho'}(\xb)$
and taking into account \eqref{EqAppr1L} and \eqref{EqBndAB}, we can conclude that
  \[\G^{L,\kappa}_{\F,(\xb,\xb)}(x,d)\not=\emptyset\ \forall (x,d)\in \B_{\rho'}(\xb)\times\R^n,\]
  where $L:=2+(2+\gamma)\big(2+ \frac l{\gamma}\big)$ and $\kappa=(1+2\kappa'(l+1))(C_1+C_2l)^2$.
  Since we choose in every step $((\hat x^{(k)},\hat d^{(k)}),\hat y^{(k)},A,B)\in \G^{L,\kappa}_{\F,(\xb,\xb)}(x^{(k)},d^{(k)})$, the assertion follows from Theorem \ref{ThConvSemiSmmooth1}.

\end{proof}

\section{Globalization}
In the last section we showed locally superlinear convergence of our implementation of the semismooth* Newton method. However, we do not only want fast local convergence but also convergence from arbitrary starting points. To this end we consider  a non-monotone line-search heuristic as well as hybrid approaches which combine this heuristic with some globally convergent method like the forward-backward (FB) splitting method, the Douglas-Rachford (DR) splitting method and some hyperplane projection method, respectively, in order to ensure global convergence.

To perform the line search we need some merit function. Similar to the damped Newton method for solving smooth equations, we use some kind of residual. Here we  define the residual by means of the approximation step, i.e., given $x$ and $\gamma>0$, we use
\begin{align}
  \label{EqRes1}
  r_\gamma(x):=\norm{(-\gamma u_\gamma(x),u_\gamma(x))}=\sqrt{1+\gamma^2}\norm{u_\gamma(x)}
\end{align}
as motivated by \eqref{EqResApprStep1}.

\subsection{A non-monotone line-search heuristic}
In general, we replace the full Newton step 4. in Algorithm \ref{AlgNewton} by a damped step of the form
\[x^{k+1}=\hat x^{(k)}+\alpha^{(k)}\triangle x^{(k)}\mbox{ with }\triangle x^{(k)}:=-A^{-1}B\hat y^{(k)}, \]
where $\alpha^{(k)}\in(0,1]$ is chosen such that the line search condition
\begin{equation}\label{EqLineSearch}r_{\gamma^{(k)}}(\hat x^{(k)}+\alpha^{(k)}s^{(k)})\leq (1+\delta^{(k)}-\mu \alpha^{(k)})r_{\gamma^{(k)}}(\hat x^{(k)})\end{equation}
is fulfilled, where $\mu\in(0,1)$ and $\delta^{(k)}$ is a given sequence of positive numbers converging to $0$.

Obviously, the step size $\alpha^{(k)}$ exists since the residual function $r_\gamma(x)$ is continuous. However, it is not guaranteed that the residual is decreasing, i.e., that $r_{\gamma^{(k)}}(x^{(k+1)})<r_{\gamma^{(k)}}(\hat x^{(k)})$.

The computation of $\alpha^{(k)}$ can be done in the usual way, e.g., we can choose the first element of a sequence $(\beta_j)$, which has  $\beta_0=1$ and converges monotonically to zero, such that the line search condition \eqref{EqLineSearch} is fulfilled.

\begin{algorithm}[Globalized \ssstar Newton heuristic for VI of the second kind]\label{AlgSSNewtHeur}\mbox{ }\\
Input: starting point $x^{(0)}$,  line search parameter $0<\nu<1$, a sequence $\delta^{(k)}\downarrow 0$, a sequence $\beta_j\downarrow 0$ with $\beta_0=1$ and a stopping tolerance $\epsilon_{tol}>0$.\\
1. Choose $\gamma^{(0)}$ and set the iteration counter $k:=0$.\\
2. If $r_{\gamma^{(k)}}(x^{(k)})\leq\epsilon_{tol}$, stop the algorithm.\\
3. Compute $G^{(k)}$ fulfilling \eqref{Eq_G_k} and the Newton direction $\Delta x^{(k)}$ by solving \eqref{EqNewtonStepAppr1}.\\
4. Determine the step size $\alpha^{(k)}$ as the first element from the sequence $\beta_j$ satisfying
\[r_{\gamma^{(k)}}(x^{(k)}+\beta_j\Delta x^{(k)})\leq (1+\delta^{(k)}-\nu \beta_j)r_{\gamma^{(k)}}(x^{(k)}).\]
5. Set $x^{(k+1)}=x^{(k)}+\alpha^{(k)}\Delta x^{(k)}$ and update $\gamma^{(k+1)}$.\\
6. Increase the iteration counter $k:=k+1$ and go to Step 2.
\end{algorithm}

Note that every evaluation of the residual function $r_\gamma(x)$ requires the computation of $u_\gamma(x)$, i.e., essentially one step of the FB splitting method. For $\gamma^{(k)}$ we suggest a choice $\gamma^{(k)}\approx\norm{\nabla f(x^{(k)}}$. Since the spectral norm $\norm{\nabla f(x^{(k)}}$ is difficult to compute, we use an easy computable norm instead, e.g., the
 maximum absolute column sum norm $\norm{\nabla f(x^{(k)})}_1$.

 Algorithm \ref{AlgSSNewtHeur} is a heuristic and we are not able to show convergence properties. Nevertheless it showed good convergence properties in practice and therefore we incorporate its principles in other algorithms to improve their performance.

 \subsection{Globally convergent hybrid approaches \label{SubSecSS_Hybrid}}

 In this subsection we suggest a combination of the \ssstar Newton method with some existing globally convergent method which exhibits both global convergence and local superlinear convergence. Assume that the used globally convergent method is formally given by some mapping $\T:\R^n\to\R^n$, which computes from some iterate $x^{(k)}$ the next iterate by
 \[x^{(k+1)}=\T(x^{(k)}).\]
 Of course, $\T$ must depend on the problem \eqref{EqVI2ndK} which we want to solve and will presumably depend also on some additional parameters which control the behavior of the method. In our notation we neglect to a large extent these dependencies.

 Consider the following well-known examples for such a mapping $\T$.
 \begin{enumerate}
   \item For the forward-backward splitting method, the mapping $\T$ is given by
   \begin{equation}\label{EqFB1}
     \T^{\rm FB}_\lambda(x)=(I+\lambda\partial q)^{-1}(I-\lambda f)(x),
   \end{equation}
   where $\lambda>0$ is a suitable prarameter. Note that $\T^{\rm FB}_\lambda(x)=x+u_{1/\lambda}(x)$.
   \item For the Douglas-Rachford splitting method we  have
   \begin{equation}\label{EqDR}
     \T^{\rm DR}_\lambda(x)=(I+\lambda f)^{-1}\Big((I+\lambda\partial q)^{-1}(I-\lambda f) +\lambda f\Big)(x)=(I+\lambda f)^{-1}(\T^{\rm FB}_\lambda+\lambda f)(x),
   \end{equation}
   where $\lambda>0$ is again some parameter.
   \item A third method is given by the hybrid projection-proximal point algorithm due to Solodov and Svaiter \cite{SolSv99}. Let $x$ and $\gamma>0$ be given and consider $\hat x=\T^{\rm FB}_{1/\gamma}(x)$, i.e. $\hat x-x=u_{\gamma}(x)$. Then $0\in \gamma(\hat x-x)+f(x)+\partial q(\hat x)$ and consequently
       \begin{equation}\label{EqPMAux}
         0\in v + \gamma(\hat x-x)+(f(x)-f(\hat x)),
       \end{equation}
       where $v:=-\gamma(\hat x-x)+f(\hat x)-f(x)\in H(\hat x).$
   Then, in the hybrid projection-proximal point algorithm the mapping $\T$ is given by the projection of $x$ on the hyperplane $\{z\mv \skalp{v,z-\hat x}=0\}$, i.e.,
   \begin{equation}\label{EqPM}\T^{\rm PM}_\gamma(x)=x-\frac{\skalp{v,\hat x-x}}{\norm{v}^2}v.\end{equation}
 \end{enumerate}
 \begin{algorithm}[Globally convergent hybrid \ssstar Newton method for VI of the second kind]\label{AlgSSNewt_Hybrid}\mbox{ }\\
Input: A method for solving \eqref{EqVI2ndK} given by the iteration operator $\T:\R^n\to\R^n$, a starting point $x^{(0)}$,  line search parameter $0<\nu<1$, a sequence $\delta^{(k)}\in (0,1)$, a sequence $\beta_j\downarrow 0$ with $\beta_0=1$  and a stopping tolerance $\epsilon_{tol}>0$.\\
1. Choose $\gamma^{(0)}$, set $r_N^{(0)}:= r_{\gamma^{(0)}}( x^{(0)})$ and set the counters $k:=0$, $l:=0$.\\
2. If $r_{\gamma^{(k)}}(x^{(k)})\leq\epsilon_{tol}$ stop the algorithm.\\
\if{3. \parbox[t]{\myAlgBox}{(optional) Apply $p$ times the operation iterator $\T$ and compute $x^{k+i}=\T(x^{(k+i-1)})$, $i=1,\ldots,p$. Set $k:=k+p$ and update $\gamma^{(k)}$.}\\}\fi
3. \parbox[t]{\myAlgBox}{Compute $G^{(k)}$ fulfilling \eqref{Eq_G_k} and the Newton direction $\Delta x^{(k)}$ by solving \eqref{EqNewtonStepAppr1}. Try to determine the step size $\alpha^{(k)}$ as the first element from the sequence $\beta_j$ satisfying $\beta_j>\delta^{(l)}$ and
\[r_{\gamma^{(k)}}(x^{(k)}+\beta_j\Delta x^{(k)})\leq (1-\nu \beta_j) r_N^{(l)}.\]}
4. If both $\Delta x^{(k)}$ and $\alpha^{(k)}$ exist, set $x^{(k+1)}=x^{(k)}+ \alpha^{(k)} \Delta x^{(k)}$, $r_N^{(l+1)} =r_{\gamma^{(k)}}(x^{(k+1)})$ and increase $l:=l+1$.\\
5. Otherwise, if the Newton direction $\Delta x^{(k)}$ or the step length  $\alpha^{(k)}$ does not exist, compute $x^{(k+1)}=\T(x^{(k)})$.\\
6. Update $\gamma^{(k+1)}$ and increase the iteration counter $k:=k+1$ and go to Step 2.
\end{algorithm}
Recall that the Newton direction $\Delta x^{(k)}$ exists, whenever condition \eqref{EqCondExistNewtonDir} is fulfilled.  In particular, by Corollary \ref{CorExistNewtonDir} this holds if $f$ is monotone and either $f$ or $\partial q$ is strongly monotone.

In what follows we denote by $k_l$ the subsequence of iterations where the new iterate $x^{k+1}$ is computed in the damped Newton Step 4, i.e.,
\[x^{(k_l)}= x^{(k_l-1)}+\alpha^{(k_l-1)}\Delta x^{(k_l-1)},\ r_N^{(l)}=r_{\gamma^{(k_l-1)}}(x^{(k_l)}).\]
\begin{theorem}\label{ThGlobConv}
  Assume that the GE \eqref{EqVI2ndK} has at least one solution and assume that the solution method given by the iteration mapping $\T:\R^n\to\R^n$ has the property that for every starting point $y^{(0)}\in\R^n$ the sequence $y^{(k)}$, given by the recursion $y^{(k+1)}=\T(y^{(k)})$, has at least one accumulation point which is a solution to the GE \eqref{EqVI2ndK}. Then for every starting point $x^{(0)}$ the sequence $x^{(k)}$ produced by Algorithm \ref{AlgSSNewt_Hybrid} with $\epsilon_{tol}=0$ and $\sum_{k=0}^\infty \delta^{(k)}=\infty$ has the following properties.
  \begin{enumerate}
  \item If the Newton iterate is accepted only finitely many times in step 4, then the sequence $x^{(k)}$ has at least one accumulation point which solves \eqref{EqVI2ndK}. Further, if the sequence $\gamma^{(k)}$ is bounded and bounded away from $0$, for every accumulation point $\xb$ of the sequence $x^{(k)}$ which is a solution to \eqref{EqVI2ndK}, the mapping $H$ is not metrically regular around $(\xb,0)$.
  \item If the Newton step is accepted infinitely many times in step 4, then every accumulation point of the subsequence $x^{(k_l)}$ is a solution to \eqref{EqVI2ndK}.
  \item If there exists an accumulation point $\xb$ of the sequence $x^{(k)}$ which solves \eqref{EqVI2ndK} and where the mapping $H$ is metrically regular and \ssstar at $(\xb,0)$, then the sequence $x^{(k)}$ converges superlinearly to $\xb$ and the Newton step in step 4 is accepted with step length $\alpha^{(k)}=1$ for all $k$ sufficiently large, provided the sequence $\gamma^{(k)}$ satisfies
      \begin{equation}\label{EqBndGamma}0<\underline{\gamma}\leq \gamma^{(k)}\leq\bar\gamma\ \forall k\end{equation}
       for some positive reals $\underline{\gamma},\bar\gamma$.
  \end{enumerate}
\end{theorem}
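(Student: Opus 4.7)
For Part~1, if the Newton step is accepted only finitely many times, there is an iteration index $K$ after which Step~4 never triggers and Step~5 therefore forces $x^{(k+1)} = \T(x^{(k)})$ for every $k\ge K$. The stated property of $\T$, applied to the starting point $x^{(K)}$, then delivers an accumulation point that solves~\eqref{EqVI2ndK}. For the second assertion of Part~1, I argue by contradiction: assume $\xb$ is such an accumulation point with $H$ metrically regular at $(\xb,0)$. Extract a subsequence $x^{(k_j)} \to \xb$ and invoke Lemma~\ref{LemBndInvMetrReg}, which applies because $\gamma^{(k)}$ lies in a bounded positive range. That lemma yields $\|\Delta x^{(k_j)}\| \le C\|u^{(k_j)}\|$, and the continuity of $u_\gamma(\cdot)$ together with $u_\gamma(\xb)=0$ forces $\Delta x^{(k_j)} \to 0$. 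Since $r_\gamma$ is continuous in $x$ with $r_\gamma(\xb)=0$, the residual $r_{\gamma^{(k_j)}}(x^{(k_j)}+\Delta x^{(k_j)})$ tends to $0$, whereas $r_N^{(l)}$ is frozen at a strictly positive value (it cannot be zero, otherwise Step~2 with $\epsilon_{tol}=0$ would have stopped the algorithm). For large $j$ the full step $\beta_0=1$ therefore satisfies the line-search inequality, contradicting the assumption that no further Newton step is accepted.

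For Part~2, the line-search condition in Step~3 enforces $r_N^{(l+1)} \le (1-\nu\beta_j) r_N^{(l)}$ with $\beta_j>\delta^{(l)}$, hence $r_N^{(l+1)} \le (1-\nu\delta^{(l)}) r_N^{(l)}$. Telescoping and using $\log(1-t)\le -t$ together with $\sum_k \delta^{(k)} = \infty$ give $r_N^{(l)} \to 0$. From $r_\gamma = \sqrt{1+\gamma^2}\,\|u_\gamma\|$ I conclude $\|u_{\gamma^{(k_l-1)}}(x^{(k_l)})\| \to 0$ and $\gamma^{(k_l-1)}\|u_{\gamma^{(k_l-1)}}(x^{(k_l)})\| \le r_N^{(l)} \to 0$. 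Passing to the limit in the optimality condition $-\gamma^{(k_l-1)}u^{(k_l)} - f(x^{(k_l)}) \in \partial q(x^{(k_l)}+u^{(k_l)})$ along any convergent subsequence $x^{(k_l)}\to\xb$, and using continuity of $f$ together with closedness of $\gph \partial q$, then yields $-f(\xb) \in \partial q(\xb)$, i.e.\ $\xb$ solves~\eqref{EqVI2ndK}.

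For Part~3, the starting point is Part~1 read contrapositively: because $H$ is metrically regular at $(\xb,0)$ and~\eqref{EqBndGamma} holds, the Newton step must be accepted infinitely often, so Part~2 gives $r_N^{(l)} \to 0$. Combining semismoothness* of $H$ at $(\xb,0)$ with Lemma~\ref{LemBndInvMetrReg} and Proposition~\ref{PropConv} applied to the reformulation~\eqref{EqVI-alt} (in the spirit of the proof of Theorem~\ref{ThAppr1Conv}), I obtain for every $\epsilon>0$ a neighborhood $U_\epsilon$ of $\xb$ such that whenever $x^{(k)} \in U_\epsilon$ and $\gamma^{(k)}\in[\underline\gamma,\bar\gamma]$ the full Newton step in~\eqref{EqNewtonStepAppr1} satisfies $\|x^{(k)} + \Delta x^{(k)} - \xb\| \le \epsilon\|x^{(k)} - \xb\|$. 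A two-sided bound $c\,\|x-\xb\| \le r_{\gamma^{(k)}}(x) \le C\,\|x-\xb\|$ valid on such a neighborhood follows from metric regularity of $H$ and the definition of $r_\gamma$ in~\eqref{EqRes1}; hence $r_{\gamma^{(k)}}(x^{(k)}+\Delta x^{(k)}) \le (C/c)\epsilon\, r_{\gamma^{(k)}}(x^{(k)})$. Corollary~\ref{CorIsolSol} additionally guarantees that $\xb$ is isolated, so $U_\epsilon$ can be chosen to contain no other solution of~\eqref{EqVI2ndK}.

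To finish Part~3 I show that once some Newton-produced iterate $x^{(k_{l_0})}$ lies in $U_\epsilon$ with $\epsilon \le (1-\nu)c/C$, the line-search check at iteration $k_{l_0}$ with $\beta_0=1$ reduces to $r_{\gamma^{(k_{l_0})}}(x^{(k_{l_0}+1)}) \le (C/c)\epsilon\, r_N^{(l_0)} \le (1-\nu) r_N^{(l_0)}$ and so is satisfied; by induction every subsequent iteration is a successful full Newton step, and the contraction estimate combined with $r_\gamma \asymp \|\cdot-\xb\|$ gives $\|x^{(k+1)}-\xb\|/\|x^{(k)}-\xb\|\to 0$. The main obstacle is producing such an $l_0$: a~priori $\xb$ is only known to be an accumulation point of $x^{(k)}$, not of the Newton subsequence $x^{(k_l)}$. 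To rule out the pathology that all visits of $x^{(k)}$ to $U_\epsilon$ happen via $\T$-steps while the Newton-accepted iterates drift elsewhere, one uses the structure of the concrete $\T$'s introduced in Subsection~\ref{SubSecSS_Hybrid}: under the monotonicity setting implicit in Algorithm~\ref{AlgSSNewt_Hybrid}, $\T^{\rm FB}_\lambda$, $\T^{\rm DR}_\lambda$ and $\T^{\rm PM}_\gamma$ are Fej\'er-monotone with respect to the solution set, and the Newton step's length is controlled by $\|u^{(k)}\|\to 0$; together these properties keep $x^{(k)}$ trapped in a neighborhood of $\xb$ once it enters, forcing $x^{(k_l)}$ to visit $U_\epsilon$ for some $l_0$ and closing the argument.
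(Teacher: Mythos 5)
Your treatments of Parts 1 and 2 are correct. Part 2 coincides with the paper's argument (telescoping the line-search inequality with $\sum\delta^{(k)}=\infty$, then closedness of $\gph\partial q$). For the second claim in Part 1 you argue directly via Lemma~\ref{LemBndInvMetrReg}: the frozen $r_N^{(l)}>0$, $\norm{\Delta x^{(k_j)}}\le C\norm{u^{(k_j)}}\to 0$ along $x^{(k_j)}\to\xb$, hence the trial residual drops below $(1-\nu)r_N^{(l)}$ and a unit step would be accepted — a contradiction. This is sound and in fact cleaner than the paper, which simply refers this claim to the third assertion (whose proof also invokes semismoothness*, not assumed in Part 1); your route needs only metric regularity, matching the statement.

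Part 3, however, has a genuine gap, and you name it yourself: you anchor the induction at a \emph{Newton-produced} iterate $x^{(k_{l_0})}\in U_\epsilon$ (so that $r_N^{(l_0)}$ is the residual at the current point), and then you cannot produce such an iterate from the hypotheses. Your proposed fix — Fej\'er monotonicity of $\T^{\rm FB}$, $\T^{\rm DR}$, $\T^{\rm PM}$ with respect to the solution set and ``$\norm{u^{(k)}}\to0$'' for the whole sequence — imports assumptions the theorem does not make: Theorem~\ref{ThGlobConv} is stated for an \emph{arbitrary} mapping $\T$ with only the accumulation-point property, no monotonicity of $f$ or $H$ is assumed (Fej\'er monotonicity of those operators itself requires such monotonicity), and $\norm{u^{(k)}}\to0$ is only known along the accepted Newton subsequence, not for the $\T$-iterates. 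So the trapping argument is not available, and Part 3 remains unproved in the stated generality. The paper closes this differently and never needs the Newton subsequence to visit the neighborhood: it shows that the unit Newton step is accepted from \emph{every} iterate $x^{(k)}\in\B_{\bar\delta}(\xb)$, however that iterate was produced. The extra ingredient is the lower estimate $\norm{x^{(k)}-\xb}\le\big(1+\kappa'(\bar\gamma+l)\big)\norm{u^{(k)}}$, obtained by applying metric regularity at the point $x^{(k)}+u^{(k)}$ (using $-\gamma^{(k)}u^{(k)}+f(x^{(k)}+u^{(k)})-f(x^{(k)})\in H(x^{(k)}+u^{(k)})$ and Corollary~\ref{CorIsolSol} so that $\dist{x,H^{-1}(0)}=\norm{x-\xb}$ locally); combined with \eqref{EqLip2} and the \ssstar estimate from Proposition~\ref{PropConv} this yields $r_{\gamma^{(k)}}(x^{(k)}+\Delta x^{(k)})\le(1-\nu)\,r_{\gamma^{(k)}}(x^{(k)})$ together with $\norm{x^{(k)}+\Delta x^{(k)}-\xb}\le\tfrac12\norm{x^{(k)}-\xb}$; hence once any iterate enters the ball — guaranteed because $\xb$ is an accumulation point of the full sequence — all subsequent iterations are full Newton steps that stay in the ball and converge superlinearly. (Your two-sided bound $c\norm{x-\xb}\le r_{\gamma}(x)\le C\norm{x-\xb}$ is essentially this same ingredient; the point you miss is that it lets you run the acceptance/contraction argument at an arbitrary in-ball iterate, so the ``pathology'' you try to exclude never needs excluding.)
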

\begin{proof}
  The first statement is an immediate consequence of our assumption on $\T$ and the third assertion. In order to show the second statement, observe that the sequence $r_N^{(l)}$ satisfies
  $r_N^{(l+1)}\leq (1-\nu\delta^{(l)})r_N^{(l)}$ implying
  \[\lim_{l\to\infty} \ln(r_N^{(l+1)})-\ln(r_N^{(0)}) \leq\lim_{l\to\infty}\sum_{i=0}^l\ln(1-\nu\delta^{(i)})\leq -\lim_{l\to\infty}\sum_{i=0}^l\nu \delta^{(i)}=-\infty.\]
  Thus $\lim_{l\to\infty}r_N^{(l)}=\lim_{l\to\infty}\sqrt{1+{\gamma^{(k_l-1)}}^2}\norm{u_{\gamma^{(k_l-1)}}(x^{(k_l)})}=0$ and we can conclude that \[\lim_{l\to\infty}\norm{u_{\gamma^{(k_l-1)}}(x^{(k_l)})}=\lim_{l\to\infty}\gamma^{(k_l-1)}\norm{u_{\gamma^{(k_l-1)}}(x^{(k_l)})}=0.\]
   Together with the inclusion
  \[0\in \gamma^{(k_l-1)}u_{\gamma^{(k_l-1)}}(x^{(k_l)})+f(x^{(k_l)})+ \partial q(x^{(k_l)}+u_{\gamma^{(k_l-1)}}(x^{(k_l)})),\]
  the continuity of $f$ and the closedness of $\gph\partial q$, it follows that $0\in f(\xb)+\partial q(\xb)$ holds for every accumulation point $\xb$ of the subsequence $x^{(k_l)}$. This shows our second assertion.

  Finally, assume that $\xb$ is an accumulation point of the sequence $x^{(k)}$ such that $0\in H(\xb)$ and $H$ is both metrically regular and \ssstar at $(\xb,0)$ and assume that \eqref{EqBndGamma} holds. Fixing $\kappa'>{\rm reg\;}H(\xb,0)$, by Lemma \ref{LemBndInvMetrReg} we can find  a positive radius $\rho'>0$ such that for all $(x^{(k)},d^{(k)})\in\B_{\rho'}(\xb)\times \R^n$ the Newton direction $\Delta x^{(k)}$ exists and the matrices $A,B$ given by \eqref{EQAB_Appr1} fulfill by virtue of \eqref{EqBndAB} the inequality
  \[\norm{A^{-1}}\norm{(A\vdots B)}_F\leq (1+2\kappa'(l+1))(C_1+C_2\norm{\nabla f(\hat x^{(k)})})^2\leq (1+2\kappa'(l+1))(C_1+C_2 l)^2,\]
  where $l$ denotes the Lipschitz constant of $f$ in $\B_{\rho'}(\xb)$. By Corollary \ref{CorIsolSol} the solution $\bar x$ is isolated and we can choose $\rho'$ possibly smaller such that $\dist{x,H^{-1}(0)}=\norm{x-\xb}$ $\forall x\in\B_{\rho'}(\xb)$.

  By Proposition \ref{PropConv} together with the first equation in \eqref{EqAppr1L}, for every $\epsilon>0$ there is some $\delta>0$ such that
  \begin{align}\nonumber\norm{x^{(k)}+\Delta x^{(k)}-\xb}&\leq \norm{\myvec{\hat x^{(k)}+\Delta x^{(k)}-\xb\\\hat d^{(k)}+\Delta d^{(k)}-\xb}}\leq \epsilon \norm{A^{-1}}\norm{(A\vdots B)}_F\norm{(\hat x^{(k)}-\xb,\hat d^{(k)}-\xb,\hat y_1^{(k)},\hat y_2^{(k)})}\\
  &\leq \epsilon(1+2\kappa'(l+1))(C_1+C_2 l)^2\Big(2+(2+\gamma^{(k)})\big(2+ \frac l{\gamma^{(k)}}\big)\Big)\norm{x^{(k)}-\xb}\label{EqAuxSuperlinear}
  \end{align}
  whenever $x^{(k)}\in \B_{\delta}(\xb)$. In particular, we can find some $0<\bar\delta<\rho'/(1+\frac l{\underline{\gamma}})$ such that
  \[\norm{x^{(k)}+\Delta x^{(k)}-\xb}\leq \frac{1-\nu}{(2+\frac l{\underline{\gamma}})\big(1+\kappa'(\bar\gamma+l)\big)}\norm{x^{(k)}-\xb}< \frac12\norm{x^{(k)}-\xb}\]
  whenever $x^{(k)}\in  \B_{\bar\delta}(\xb)$. We now claim that for every iterate $x^{(k)}\in \B_{\bar\delta}(\xb)$ the Newton step  with step size $\alpha^{(k)}=1$ is accepted.
  Indeed, consider $x^{(k)}\in \B_{\bar\delta}(\xb)$. Then, by \eqref{EqAppr1Lip1} we obtain
  \[\norm{x^{(k)}+u^{(k)}-\xb}=\norm{\hat d^{(k)}-\xb}\leq (1 + \frac l{\underline{\gamma}})\norm{x^{(k)}-\xb}<\rho'\]
  and by the definition of $u^{(k)}$ we have
  \[-\gamma^{(k)}u^{(k)}+f(x^{(k)}+u^{(k)})-f(x^{(k)})\in H(x^{(k)}+u^{(k)}).\]
  Due to metric regularity we conclude
  \begin{align*}\dist{x^{(k)}+u^{(k)},H^{-1}(0)}&=\norm{x^{(k)}+u^{(k)}-\xb}\leq\kappa'\dist{0,H(x^{(k)}+u^{(k)})}\\&\leq \kappa'\norm{-\gamma^{(k)}u^{(k)}+f(x^{(k)}+u^{(k)})-f(x^{(k)})}
  \leq  \kappa'(\bar \gamma+l)\norm{u^{(k)}}\end{align*}
  implying
  \[\norm{x^{(k)}-\xb}\leq \big(1+ \kappa'(\bar \gamma+l)\big)\norm{u^{(k)}}.\]
  Since $u_{\gamma^{(k)}}(\bar x)=0$, we obtain from \eqref{EqLip2}
  \begin{align*}
  \norm{u_{\gamma^{(k)}}(x^{(k)}+\Delta x^{(k)})}&\leq \big(2+\frac l{\underline{\gamma}}\big)\norm{x^{(k)}+\Delta x^{(k)}-\xb}\leq \frac{1-\nu}{\big(1+\kappa'(\bar\gamma+l)\big)}\norm{x^{(k)}-\xb}\leq (1-\nu)\norm{u^{(k)}}\\
  &=(1-\nu)\norm{u_{\gamma^{(k)}}(x^{(k)})}
  \end{align*}
  showing
  \[ r_{\gamma^{(k)}}(x^{(k)}+\Delta x^{(k)})=\sqrt{1+{\gamma^{(k)}}^2}\norm{u_{\gamma^{(k)}}(x^{(k)}+\Delta x^{(k)})}\leq (1-\nu) \sqrt{1+{\gamma^{(k)}}^2}\norm{u_{\gamma^{(k)}}(x^{(k)})}=(1-\nu)r_{\gamma^{(k)}}(x^{(k)}).\]
  From this we conclude that the step size $\alpha^{(k)}=1$ is accepted and thus our claim holds true.
  Now let $\bar k$ denote the first index such that $x^{(\bar k)}$ enters the ball $\B_{\bar\delta}$. Then for all $k\geq \bar k$ we have
  \[x^{(k+1)}=x^{(k)}+\Delta x^{(k)}, \quad \norm{x^{(k+1)}-\xb}\leq \frac 12\norm{x^{(k)}-\xb}\]
  establishing convergence of the sequence $x^{(k)}$ to $\xb$. The superlinear speed of convergence is a consequence of \eqref{EqAuxSuperlinear}.
\end{proof}

In the following subsections we discuss some implementation details and alternatives for the three different mappings $\T$ introduced at the beginning of this section.

\subsubsection{Douglas-Rachford splitting method}

In order that $\T^{\rm DR}_\lambda$ meets the assumptions of Theorem \ref{ThGlobConv} it is sufficient that $f$ is monotone, see, e.g., \cite[Corollary 2]{LiMe79}.
We now discuss a variant of Algorithm \ref{AlgSSNewt_Hybrid} which seems to be slightly more efficient.
Consider the sequences $x^{(k)}$ and $v^{(k)}$ generated by
\[x^{(k+1)}=\T^{\rm DR}_\lambda(x^{(k)}),\ v^{(k)}=(I+\lambda f)(x^{(k)}).\]
Then it is well known, see, e.g., \cite{LiMe79}, that $v^{(k+1)}= \G_\lambda(v^{(k)})$, where
\[\G_\lambda(v): = \Big(J_{\partial q}^\lambda\big(2J_f^\lambda-I\big)+I-J^\lambda_f\Big)(v)\]
with resolvents $J_{\partial q}^\lambda:=(I+\lambda\partial q)^{-1}$, $J_f^\lambda:=(I+\lambda f)^{-1}$.
From \eqref{EqDR} we obtain
\begin{equation}\label{EqAux_v}v^{(k+1)}=\T^{\rm FB}_\lambda(x^{(k)})+ \lambda f(x^{(k)})=x^{(k)}+u_{\frac1\lambda}(x^{(k)})+\lambda f(x^{(k)})=v^{(k)}+u_{\frac1\lambda}(x^{(k)})\end{equation}
implying $u_{\frac1\lambda}(x^{(k)})=v^{(k+1)}-v^{(k)}$.
Thus, $u_{\frac1\lambda}(x^{(k+1)})=v^{(k+2)}-v^{(k+1)}=\G_\lambda(v^{(k+1)})-\G_\lambda(v^{(k)})$ and  by \cite[relation (17)]{LiMe79} we obtain
\begin{align}\nonumber\norm{u_{\frac1\lambda}(x^{(k+1)})}^2&=\norm{\G_\lambda(v^{(k+1)})-\G_\lambda(v^{(k)})}^2\\
\nonumber&\leq \skalp{\G_\lambda(v^{(k+1)})-\G_\lambda(v^{(k)}), v^{(k+1)}-v^{(k)}}\\
\nonumber&\qquad\qquad-\skalp{(I-J_f^\lambda)(v^{(k+1)})- (I-J_f^\lambda)(v^{(k)}),J_f^\lambda(v^{(k+1)})-J_f^\lambda(v^{(k)})}\\
\label{EqDecr_u}&=\skalp{u_{\frac 1\lambda}(x^{(k+1)}),u_{\frac 1\lambda}(x^{(k)})}-\lambda\skalp{f(x^{(k+1)})-f(x^{(k)}),x^{(k+1)}-x^{(k)}}.
\end{align}
For the following analysis we require that $f$ is even strongly monotone, i.e.,
\begin{equation}\label{EqStrongMon}\mu_f:=\inf_{x_1\not=x_2}\frac{\skalp{f(x_1)-f(x_2), x_1-x_2}}{\norm{x_1-x_2}^2}>0.\end{equation}
Recall that $\mu_f=\inf_{x\in\R^n}\mu_f(x)$ with $\mu_f(x)$ given by \eqref{EqMu_f}. Then we obtain from \eqref{EqDecr_u} that
\[\norm{u_{\frac1\lambda}(x^{(k+1)})}^2\leq \norm{u_{\frac1\lambda}(x^{(k+1)})} \norm{u_{\frac1\lambda}(x^{(k+1)})}-\mu_f\norm{x^{(k+1)}-x^{(k)}}^2\]
 and thus
$\norm{u_{\frac1\lambda}(x^{(k+1)})}<\norm{u_{\frac1\lambda}(x^{(k)})}$, i.e., the residual $r_{\frac 1\lambda}(x^{(k)})$ is strictly decreasing. The basic idea is now, to perform alternately a step of the Douglas-Rachford splitting method with parameter $\lambda=\frac 1\gamma$ and then a \ssstar Newton step with line search with parameter $\gamma$, where in the line search we possibly sacrifice a part of the reduction in the residual gained in the Douglas-Rachford splitting step. This procedure is mainly motivated by the excellent performance  of the non-monotone line search heuristic of Algorithm \ref{AlgSSNewtHeur}, where we now have a possibility to control the increment in the residual in order to guarantee convergence.

 \begin{algorithm}[Globally convergent hybrid \ssstar Newton - Douglas Rachford method for VI of the second kind]\label{AlgSSNewt_DR}\mbox{ }\\
Input: A starting point $x^{(0)}$, a parameter $\gamma>0$, line search parameters $0<\nu<1$, $0<\xi<1$, a sequence $\beta_j\downarrow 0$ with $\beta_0=1$  and a stopping tolerance $\epsilon_{tol}>0$.\\
1. Compute $u^{(0)}:=u_{\gamma}(x^{(0)})$ and set the counter $k:=0$. \\
2. If $r_{\gamma}(x^{(2k)})\leq\epsilon_{tol}$, stop the algorithm.\\
3. Compute $x^{2k+1}=\T^{\rm DR}_{1/\gamma}(x^{(2k)})$ and $u^{(2k+1)}:=u_{\gamma}(x^{(2k+1)})$.\\
4. \parbox[t]{\myAlgBox}{Compute $G^{(2k+1)}$ fulfilling \eqref{Eq_G_k} and the Newton direction $\Delta x^{(2k+1)}$ by solving \eqref{EqNewtonStepAppr1}. Determine the step size $\alpha^{(2k+1)}$ as the first element from the sequence $\beta_j$ satisfying
\[\norm{u_{\gamma}(x^{(2k+1)}+\beta_j\Delta x^{(2k+1)})}\leq (1-\nu \beta_j)\big(\xi\norm{u^{(2k)}}+(1-\xi)\norm{u^{(2k+1)}}\big).\]}
5. Set $x^{(2k+2)}=x^{(2k+1)}+ \alpha^{(2k+1)} \Delta x^{(2k+1)}$ and $u^{(2k+2)}:=u_{\gamma}(x^{(2k+2)})$.\\
6. Increase the  counter $k:=k+1$ and go to Step 2.
\end{algorithm}

\begin{theorem}\label{ThGlobDR}
  If $f$ is strongly monotone then Algorithm \ref{AlgSSNewt_DR} is well defined. If, in addition, $f$ is Lipschitzian on the set $S^{(0)}:=\{x\mv\norm{u_\gamma(x)}\leq \norm{u_\gamma(x^{(0)})}\}$, then  the sequence $u^{(2k)}$ converges at least Q-linearly to $0$ and the sequence $x^{(j)}$ converges at least R-linearly to the unique solution $\xb$ of \eqref{EqVI2ndK}. If $H$ is \ssstar at $(\xb,0)$, then convergence of the sequence $x^{(2k)}$ is Q-superlinear.
\end{theorem}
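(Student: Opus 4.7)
I would split the theorem into its three claims and attack them in order.

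\emph{Well-definedness.} Strong monotonicity of $f$ supplies $\mu_f>0$, so Corollary~\ref{CorExistNewtonDir} already ensures that $\Delta x^{(2k+1)}$ exists at every step. For the line search, I note that \eqref{EqDecr_u} combined with strong monotonicity forces the strict inequality $\|u^{(2k+1)}\|<\|u^{(2k)}\|$ whenever $u^{(2k)}\neq 0$ (and otherwise $x^{(2k)}$ already solves the GE). Since the residual on the left-hand side of the line-search test is continuous and reduces to $\|u^{(2k+1)}\|$ as $\beta_j\downarrow 0$, while the right-hand side tends to $\xi\|u^{(2k)}\|+(1-\xi)\|u^{(2k+1)}\|>\|u^{(2k+1)}\|$, the acceptance inequality holds for all sufficiently small $\beta_j$.

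\emph{Q-linear decay of $\|u^{(2k)}\|$ and R-linear decay of $\{x^{(j)}\}$.} Starting from \eqref{EqDecr_u} I would apply Cauchy--Schwarz and strong monotonicity to get
\[\|u^{(2k+1)}\|^2\le\|u^{(2k+1)}\|\,\|u^{(2k)}\|-\tfrac{\mu_f}{\gamma}\|x^{(2k+1)}-x^{(2k)}\|^2,\]
and then relate $\|x^{(2k+1)}-x^{(2k)}\|$ to $\|u^{(2k)}\|$ via the identity $v^{(2k+1)}-v^{(2k)}=u^{(2k)}$ from \eqref{EqAux_v} and the Lipschitz constant $l$ of $f$ on $S^{(0)}$, yielding $\|x^{(2k+1)}-x^{(2k)}\|\ge\gamma\|u^{(2k)}\|/(\gamma+l)$. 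Dividing the inequality by $\|u^{(2k)}\|^2$ produces $t^2-t+c\le0$ with $t:=\|u^{(2k+1)}\|/\|u^{(2k)}\|$ and $c:=\mu_f\gamma/(\gamma+l)^2$; since $\mu_f\le l$ one checks $c\in(0,1/4]$, so $t\le q:=(1+\sqrt{1-4c})/2<1$. Substituting into the accepted line-search inequality yields $\|u^{(2k+2)}\|\le q'\|u^{(2k)}\|$ with $q':=\xi+(1-\xi)q<1$. Induction also shows $\|u^{(k)}\|\le\|u^{(0)}\|$ for all $k$, so every iterate lies in $S^{(0)}$ and the Lipschitz hypothesis is self-consistent. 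For the R-linear convergence of $\{x^{(j)}\}$, I would reproduce the bound $\|x-\xb\|\le C\|u_\gamma(x)\|$ used in the proof of Theorem~\ref{ThGlobConv}(3): strong monotonicity of $f$ gives metric regularity of $H$ with modulus at most $1/\mu_f$, and combining this with the inclusion $-\gamma u_\gamma(x)+f(x+u_\gamma(x))-f(x)\in H(x+u_\gamma(x))$ delivers the bound, transferring Q-linear decay of $\|u^{(2k)}\|$ into R-linear convergence of $\{x^{(j)}\}$ to the unique solution $\xb$.

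\emph{Q-superlinear rate for $x^{(2k)}$.} Under the semismoothness$^*$ assumption, metric regularity of $H$ at $(\xb,0)$ allows me to invoke Proposition~\ref{PropConv} exactly as in the proof of Theorem~\ref{ThAppr1Conv}, obtaining $\|x^{(2k+1)}+\Delta x^{(2k+1)}-\xb\|=o(\|x^{(2k+1)}-\xb\|)$ once $x^{(2k+1)}$ is close enough to $\xb$. The same metric regularity also provides the reverse bound $\|u_\gamma(x)\|\ge\|x-\xb\|/C'$, which combined with the Lipschitz bound \eqref{EqLip2} yields $\|u_\gamma(x^{(2k+1)}+\Delta x^{(2k+1)})\|=o(\|u^{(2k+1)}\|)$. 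Because the line-search right-hand side at $\beta_0=1$ is bounded below by $(1-\nu)\|u^{(2k+1)}\|$, the full Newton step is eventually accepted, and chaining the estimates
\[\|x^{(2k+2)}-\xb\|=o(\|x^{(2k+1)}-\xb\|)=o(\|u^{(2k+1)}\|)\le o(q\|u^{(2k)}\|)=o(\|x^{(2k)}-\xb\|)\]
delivers the Q-superlinear rate for $\{x^{(2k)}\}$.

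\emph{Main obstacle.} I expect the main technical work to lie in extracting the explicit contraction constant $q<1$ from the Douglas--Rachford step via the quadratic-inequality argument (and in verifying $c\in(0,1/4]$ uniformly in the choice of $\gamma$), together with confirming that the reverse bound $\|u_\gamma(x)\|\gtrsim\|x-\xb\|$ obtained from metric regularity is strong enough to make the line-search criterion accept the full Newton step in the asymptotic regime.
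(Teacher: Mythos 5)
Your plan is essentially sound and, for three of its four pieces, coincides with the paper's own proof: the well-definedness argument (Corollary \ref{CorExistNewtonDir} for the Newton direction, strict decrease from \eqref{EqDecr_u} plus continuity of $\alpha\mapsto\norm{u_\gamma(x^{(2k+1)}+\alpha\Delta x^{(2k+1)})}$ for the step size), the extraction of a contraction factor from \eqref{EqDecr_u} and \eqref{EqAux_v}, and the superlinear part (Proposition \ref{PropConv} as in Theorems \ref{ThAppr1Conv}/\ref{ThGlobConv}, eventual acceptance of the full step, and the ratio bound $\norm{x^{(2k+1)}-\xb}\le(3+l/\gamma)\norm{x^{(2k)}-\xb}$ via \eqref{EqAppr1Lip2}). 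Two remarks on the differences. First, for the contraction factor the paper argues more crudely from $\norm{u^{(2k+1)}}^2\le\norm{u^{(2k+1)}}\norm{u^{(2k)}}-c\norm{u^{(2k)}}^2$ to $\norm{u^{(2k+1)}}\le(1-c)\norm{u^{(2k)}}$ with $c=\mu_f\gamma/(\gamma+l)^2$, whereas your quadratic-inequality root $q=\bigl(1+\sqrt{1-4c}\bigr)/2\le 1-c$ is a (slightly sharper) valid alternative; this is not the "main obstacle" you anticipated.

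Second, and this is the one genuine gap: for R-linear convergence you replace the paper's argument (which bounds $\norm{\Delta x^{(2k+1)}}\le C'\norm{u^{(2k+1)}}$ via Corollary \ref{CorExistNewtonDir}, shows $x^{(2k)}$ is Cauchy with geometric tail, and only then identifies the limit as $\xb$ by continuity of $u_\gamma$ and uniqueness) by a global error bound $\norm{x-\xb}\le C\norm{u_\gamma(x)}$ obtained from metric regularity of $H$ and the inclusion $-\gamma u_\gamma(x)+f(x+u_\gamma(x))-f(x)\in H(x+u_\gamma(x))$. To make that bound quantitative you must estimate $\norm{f(x+u_\gamma(x))-f(x)}$ by $l\norm{u_\gamma(x)}$, but the point $x+u_\gamma(x)$ need not lie in $S^{(0)}$, and the theorem only assumes $f$ Lipschitzian on $S^{(0)}$ (and you cannot invoke boundedness of the iterates, since that is exactly what you are trying to establish — the paper uses this inclusion only locally near $\xb$, where $f$ is $C^1$, hence Lipschitz). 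The gap is fixable: either follow the paper's Cauchy-sequence route, or derive the error bound for $x\in S^{(0)}$ without evaluating $f$ at $x+u_\gamma(x)$, e.g.\ by combining monotonicity of $\partial q$ (at $x+u_\gamma(x)$ and $\xb$) with strong monotonicity of $f$ and the Lipschitz estimate $\norm{f(x)-f(\xb)}\le l\norm{x-\xb}$, which is legitimate because $\xb\in S^{(0)}$; this yields $\mu_f\norm{x-\xb}^2\le(\gamma+l)\norm{x-\xb}\,\norm{u_\gamma(x)}+\gamma\norm{u_\gamma(x)}^2$ and hence the desired bound. With that repair your route goes through and even gives the even-iterate estimate directly, at the price of a slightly more delicate monotonicity computation than the paper needs.
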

\begin{proof}
Since $f$ is strongly monotone, for every $k$ the Newton direction $\Delta x^{(2k+1)}$ is well defined by Corollary \ref{CorExistNewtonDir}. Further, from \eqref{EqDecr_u} it follows that $\norm{u^{(2k+1}}<\norm{u^{(2k)}}$ implying $\norm{u^{(2k+1)}}< \xi\norm{u^{(2k)}}+(1-\xi)\norm{u^{(2k+1)}}<\norm{u^{(2k}}$. Since the function $\phi^{(2k+1)}(\alpha):=\norm{u_{\gamma}(x^{(2k+1)}+\alpha\Delta x^{(2k+1)})}$ is continuous by virtue of \eqref{EqLip2} and $\phi^{(2k+1)}(0)=\norm{u^{(2k+1)}}$, we conclude that $\phi^{(2k+1)}(\alpha)<(1-\nu\alpha)(\xi\norm{u^{(2k)}}+(1-\xi)\norm{u^{(2k+1)}})$ for all sufficiently small $\alpha>0$. Thus, also the step size $\alpha^{(2k+1)}$ is well defined and hence so is the whole algorithm. Note that
\begin{equation}\label{EqAuxU}\norm{u^{(2k+2)}}=\phi^{(2k+1)}(\alpha^{(2k+1)})<\xi\norm{u^{(2k)}}+(1-\xi)\norm{u^{(2k+1)}}<\norm{u^{(2k)}}\end{equation}
and therefore $x^{(j)}\in S^{(0)}$ $\forall j>0$. Denoting by $l$ the Lipschitz constant of $f$ on $S^{(0)}$, we conclude that $\norm{\nabla f(x^{(2k+1)})}\leq l$ $\forall k$ because of $x^{(2k+1)}\in\inn S^{(0)}$.
From \eqref{EqAux_v} we deduce
\[(I+\frac 1\gamma f)(x^{(2k+1)})-(I+\frac 1\gamma f)(x^{(2k)})=u^{(2k)}\]
implying
\[\norm{u^{(2k)}}>\norm{x^{(2k+1)}-x^{(2k)}}\geq \frac \gamma{\gamma+l}\norm{u^{(2k)}}.\]
 Using \eqref{EqDecr_u} we obtain
\[\norm{u^{(2k+1)}}^2\leq \skalp{u^{(2k+1)},u^{(2k)}}-\frac{\mu_f}{\gamma}\norm{x^{(2k+1)}-x^{(2k)}}^2\leq \norm{u^{(2k+1)}}\norm{u^{(2k)}}- \frac{\mu_f\gamma}{(\gamma+l)^2}\norm{u^{(2k)}}^2\]
and consequently
\[\norm{u^{(2k+1)}} < \tau \norm{u^{(2k)}}\quad\mbox{with}\quad\tau:=1-\frac{\mu_f\gamma}{(\gamma+l)^2}<1.\]
Combining this estimate with \eqref{EqAuxU} we obtain
\[\norm{u^{(2k+2)}}<\bar\tau\norm{u^{(2k)}}\quad\mbox{with}\quad \bar\tau:=\xi+\tau(1-\xi))<1\]
and Q-linear convergence of the sequence $u^{(2k)}$ is established. By Corollary \ref{CorExistNewtonDir} we obtain
\[\norm{\Delta x^{(2k+1)}}\leq \Big(1+\frac 1{\mu_f}\norm{\nabla f(x^{(2k+1)})-I}\Big)\max\{1,\gamma\}\norm{u^{(2k+1)}}\leq \Big(1+\frac{l+1}{\mu_f}\Big)\max\{1,\gamma\}\norm{u^{(2k+1)}}\]
and together with $\alpha^{(2k+1)}\leq1$ we have
\begin{align*}\norm{x^{(2k+2)}-x^{(2k)}}&\leq \norm{x^{(2k+2)}-x^{(2k+1)}}+\norm{x^{(2k+1)}-x^{(2k)}}\leq \norm{\Delta x^{(2k+1)}}+\norm{u^{(2k)}}\leq C \norm{u^{(2k)}}\end{align*}
with $C:=\tau \big(1+\frac {l+1}{\mu_f}\big)\max\{1,\gamma\}+1$. This implies
\[\norm{x^{(2j)}-x^{(2k)}}\leq C\sum_{i=k}^{j-1}\norm{u^{(2i)}}\leq \frac C{1-\bar\tau}\norm{u^{(2k)}}\]
for all $0<k<j$. Thus $x^{(2k)}$ is a Cauchy sequence and therefore convergent to some $\tilde x$. By continuity of $u_\gamma(\cdot)$ we have
$u_\gamma(\tilde x)=\lim_{k\to\infty} u_\gamma(x^{(2k)})=\lim_{k\to\infty}u^{(2k)}=0$ and hence $0\in H(\tilde x)$. But by strong monotonicity of $H$ the solution of \eqref{EqVI2ndK} is unique and $\tilde x=\bar x$ follows. Further we have
\[\norm{x^{(2k)}-\xb}\leq \frac C{1-\bar\tau}\bar\tau^k\norm{u^{(0)}},\]
\[\norm{x^{(2k+1)}-\xb}\leq \norm{x^{(2k)}-\xb}+\norm{x^{(2k+1)}-x^{(2k)}}\leq \norm{x^{(2k)}-\xb}+\norm{u^{(2k)}}\leq\Big(1+\frac C{1-\bar\tau}\Big)\bar\tau^k\norm{u^{(0)}}\]
and R-linear convergence of $x^{(j)}$ to $\bar x$ with convergence factor $\sqrt{\bar\tau}$ follows.

There remains to show the superlinear convergence of the sequence $x^{(2k)}$. Strong monotonicity of $f$ implies that $H$ is a maximal strongly monotone mapping and hence it is (strongly) metrically regular around $(\xb,0)$. Using similar argument as in the proof of Theorem \ref{ThGlobConv} we can conclude that $\alpha^{(k)}=1$ for all $k$ sufficiently large and
$\lim_{k\to\infty}\norm{x^{(2k+2)}-\xb}/\norm{x^{(2k+1)}-\xb}=0$. Further,
\[\frac{\norm{x^{(2k+1)}-\xb}}{\norm{x^{(2k)}-\xb}}\leq 1+\frac{\norm{u^{(2k)}}}{\norm{x^{(2k)}-\xb}}\leq 3+\frac l\gamma\]
by \eqref{EqAppr1Lip2} and $\lim_{k\to\infty}\norm{x^{(2k+2)}-\xb}/\norm{x^{(2k)}-\xb}=0$ follows. This completes the proof.
\end{proof}

\begin{remark}
  The factor $\tau$ appearing in the proof of Theorem \ref{ThGlobDR} is the smallest when $\gamma=l$, the Lipschitz constant of $f$. This is in accordance with our practical experience with the heuristic Algorithm \ref{AlgSSNewtHeur} that a choice $\gamma^{(k)}\approx\norm{\nabla f(x^{(k)})}$ yields good results.
\end{remark}

\begin{remark}
  The requirement that $f$ is Lipschitzian on $S^{(0)}$ is, e.g., fulfilled if $S^{(0)}$ is bounded. In particular, since $x+u_\gamma(x)\in\dom\partial q$, this is the case when $\dom\partial q$ is bounded.
\end{remark}

\subsubsection{Forward-backward splitting method}

Most of the research on  forward-backward splitting methods has relied on
assumptions of strong monotonicity, cf. \cite{Gab83}. E.g., when $f$ is Lipschitzian on $\dom\partial q$, and either $f$ or $\partial q$ is strongly monotone, then $\T_\lambda^{\rm FB}$ fulfills the requirements of Theorem \ref{ThGlobConv} provided $\lambda$ is chosen sufficiently small, see, e.g., \cite{ChRo97}. Using \cite[Theorem 2.4]{ChRo97}, it is not difficult to show, that for the sequence $x^{(k+1)}=\T^{\rm FB}_\lambda(x^{(k)})$ we have
\[\norm{u_{1/\lambda}(x^{(k+1)})}<\tau \norm{u_{1/\lambda}(x^{(k)})}\]
with some factor $\tau<1$ for $\lambda$ small enough. We could proceed in the same way as we have used for the Douglas-Rachford splitting method, but we omit to do this for the following reason. When we are forced to choose $\lambda$ very small, in particular when $\frac 1\lambda$ is much larger than the Lipschitz constant of $f$, our numerical experiments do not show a favourable behaviour compared with our \ssstar Newton approaches based on $\T^{\rm DR}_\lambda$ and $T^{\rm PM}_\lambda$. On the other hand, if we are allowed to choose $\lambda$ comparatively large, then the pure forward-backward method shows a good convergence behaviour and we need not to use the \ssstar Newton method at all.

\subsubsection{Hybrid projection-proximal point algorithm}

When using $\T^{\rm PM}_\gamma$, we only need monotonicity of $H$, monotonicity of $f$ is not required.

Consider a sequence $x^{(k+1)}=\T^{\rm PM}_{\gamma^{(k)}}(x^{(k)})$. It follows from \cite[Theorem 2.2]{SolSv99} that the following two conditions are sufficient in order to meet the assumptions of Theorem \ref{ThGlobConv}:
\begin{enumerate}
  \item $\gamma^{(k)}>0$ $\forall k$ and $\sum_{k=0}^{\infty} (\gamma^{(k)})^{-2}=\infty$.
  \item There is some $\sigma\in[0,1)$ such that
  \begin{align}\label{EqSolSvGamma}\norm{f(x^{(k)}+u^{(k)})- f(x^{(k)})}\leq \sigma\max\{\norm{-\gamma^{(k)}u^{(k)}+ f(x^{(k)}+u^{(k)})- f(x^{(k)})},
  \gamma^{(k)}\norm{u^{(k)}}\}\quad \forall k,\end{align}
  where $u^{(k)}:=u_{\gamma^{(k)}}(x^{(k)}))$.
\end{enumerate}

We now demonstrate that both conditions can be fulfilled by setting $\gamma^{(k)}=\hat\gamma$ $\forall k$ for any $\hat\gamma\geq\hat l/\sigma$ with
\[\hat l:=\max\{\norm{\nabla f(x)}\mv x\in \B_{2\norm{x^{(0)}-\xb}}(\xb)\},\]
where $\xb$ denotes any solution of \eqref{EqVI2ndK} and $\sigma\in(0,1)$ is arbitrarily fixed. It follows that $f$ is Lipschitzian on $\B_{2\norm{x^{(0)}-\xb}}(\xb)$ with constant $\hat l$. Of course, condition 1. is trivially fulfilled and there remains to show the second one. Consider any iterate $x^{(k)}\in \B_{\norm{x^{(0)}-\xb}}(\xb)$.  By \eqref{EqLip1} we obtain
\[\norm{x^{(k)}+u^{(k)}-\xb}\leq \norm{x^{(k)}-\xb} +\frac{\norm{f(x^{(k)})-f(\xb)}}{\hat\gamma}\leq (1+\sigma)\norm{x^{(k)}-\xb}\]
and therefore
\[\norm{f(x^{(k)}+u^{(k)})- f(x^{(k)})}\leq \hat l \norm{u^{(k)}}\leq \sigma\hat\gamma\norm{u^{(k)}}.\]
By \cite[Lemma 2.1]{SolSv99} we have $\norm{x^{(k+1)}-\xb}\leq \norm{x^{(k)}-\xb}$ and our claim follows by induction.

In practice we choose $\gamma^{(k)}$ not constant in every iteration, but we try it to adjust it to a local Lipschitz constant of $f$ near $x^{(k)}$. E.g., we can choose $\gamma^{(k)}$ as the first element of a sequence $\chi_j\uparrow\infty$ such that $\chi_j\geq \norm{\nabla f(x^{(k)})}$ and inequality \eqref{EqSolSvGamma} holds.

\section{Numerical experiments}
All variants of the semismooth* Newton method presented in the preceding sections have been extensively tested by means of a wide range of examples. In this section we will show first the behavior of Algorithm 1 via a low-dimensional example with an economic background. Thereafter we will illustrate the efficiency of the family of methods, presented in Section 5, by means of an artificially constructed set of problems having a variable scale.

\subsection{An economic equilibrium}
In \cite{OV} the authors considered an evolution process in an oligopolistic market, where the players (firms) adapt their strategies (productions) according to changing external parameters (input prices etc.). In their decisions, however, they must take into account that each change of production may be associated with some costs, see \cite{Fl}. As derived in \cite{OV}, the respective Cournot-Nash equilibrium at some time instant is governed by GE \eqref{EqVI2ndK} with
$$q = \tilde{q} + \delta_A, \qquad
A = \prod\limits_{i=1}^n A_i, \qquad \tilde{q}(x) = \sum\limits_{i=1}^n \tilde{q_i}(x_i)$$
where $n$ denotes the number of players. The strategy sets $A_i$ are nonempty and compact intervals $[b_i, d_i]$ and $\tilde{q_i}(x_i) = \beta_i | x_i - a_i |$ for some non-negative reals $\beta_i$ and parameters $a_i\in A_i, i=1,2,\ldots,n$. Mapping $f$ is continuously differentiable on an open set containing $\dom \partial{q} = A$ and its description can be found in, e.g., \cite{MSS} and \cite{OKZ}; see also \cite{OV}, where the values of $a=(a_1,a_2,\ldots,a_n)$ and $\beta=(\beta_1,\beta_2,\ldots,\beta_n)$ are specified. The implementation of the semismooth* Newton method described in Section 4 has been first applied to the problem formulation from \cite{OV}, where all production cost functions are convex and f is strongly monotone on $A$. Thereafter we have replaced the production cost function of the first player by a (more realistic) concave one. As a consequence, mapping $f$ has lost its monotonicity on $A$ and the respective GE \eqref{EqVI2ndK} might have possibly multiple solutions with not all of them being necessarily Cournot-Nash equilibria. It has turned out, however, that, in our example, the Jacobian of $f$ is positive definite at the obtained solution. This implies in particular that this point is a Cournot-Nash equilibrium and the respective multifunction $H$ is metrically regular there.

It is easy to see that in both cases the resulting multifunction $\F$ is semismooth* at any point of its graph. Since $q$ is a separable function, one has that
$\partial{q(x)}=\prod\limits_{i=1}^n \partial{q_i(x_i)}$ and the sets $\gph\partial{q_i}$ attain the form depicted in Figure \ref{fig:gph_partial}.


Next we provide a simple formula for the computation of the matrix $G$ needed in the Newton step. This formula, however, will be given for a more general situation considered in the family of test examples discussed in Subsection \ref{subsection_6.2}. Assume that each $\gph \partial q_i$ is a polygonal line in $\mathbb{R}^2$ connecting the given points
\begin{equation}\label{EqPoly} (\xi^i_1, -\infty), (\xi^i_1, \eta^i_1), (\xi^i_2, \eta^i_2),
\dots,
(\xi^i_{2 m_i-1}, \eta^i_{2 m_i-1}),(\xi^i_{2 m_i}, \eta^i_{2 m_i}), (\xi^i_{2 m_i}, \infty), \quad i=1, 2, \ldots, n,
\end{equation}
for some integer $m_i \geq 1$. Further suppose that
\begin{eqnarray*}
&&\Delta \xi^i_j :=  \xi^i_{j+1} - \xi^i_j
\begin{cases}
>0 \quad \mbox{if }j \mbox{ is odd} \\
=0 \quad \mbox{if }j \mbox{ is even}, \\
\end{cases} \\
&&\Delta \eta^i_j :=  \eta^i_{j+1} - \eta^i_j  \begin{cases}
\geq 0 \quad \mbox{if }j \mbox{ is odd} \\
>0 \quad \mbox{if }j \mbox{ is even}. \\
\end{cases}
\end{eqnarray*}
Observe that $\Delta \xi^i_j + \Delta \eta^i_j>0$ holds for all $i$ and all $j=1,\ldots,2m_i-1$.
It follows that a polygonal line given this way is monotone increasing and, consequently, $q_i$ is a convex piecewise linear-quadratic function with $\dom q_i = [ \xi_1^i, \xi^i_{2 m_i}    ]$. Clearly, in the example depicted in Figure \ref{fig:gph_partial} one has
$$m_i=2, \xi^i_1=b_i, \xi^i_2=\xi^i_3=a_i, \xi^i_4=d_i, \eta_1^i=\eta_2^i=-\beta_i, \eta_3^i=\eta_4^i=\beta_i.$$
\begin{figure}
\centering
\begin{tikzpicture}[scale=5]
  \draw[-] (-0.25,0) -- (2.25,0) coordinate (x axis);
  \draw[-] (0,-0.75) -- (0,0.75) coordinate (y axis);

  \draw[red,thick, -] (0.25,-0.75) |- (0.9,-0.5) |- (0.9,0.5) |- (2,0.5) -| (2,0.75) ;
  \draw[dotted] (0,0.5) -- (0.9,0.5) ;
  \draw[dotted] (0,-0.5) -- (0.25,-0.5) ;
  \draw[dotted] (0.25,0) -- (0.25,-0.5) ;
  \draw[dotted] (2,0) -- (2, 0.5) ;

  \draw (0,0) node[below left] {$0$} ;
  \draw (0,0.5) node[left] {$\beta_i$} ;
  \draw (0,-0.5) node[left] {$-\beta_i$} ;
  \draw (0.25,0) node[below left] {$b_i$} ;
  \draw (0.9,0) node[below left] {$a_i$} ;
  \draw (2,0) node[below ] {$d_i$} ;

  \end{tikzpicture}
  \caption{$\gph \partial q_i$ with $A_i=[b_i, d_i]$. }
  \label{fig:gph_partial}
\end{figure}
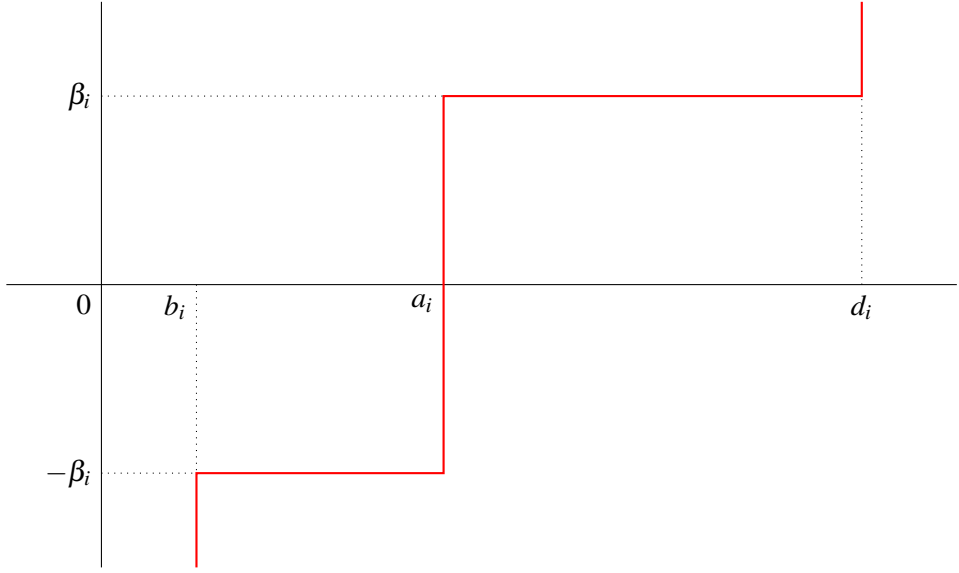
With this problem structure it is not difficult to compute the quantity $u_\gamma(x)$ as follows. For each $i=1,\ldots,n$ we denote by $u_i$ and $f_i$ the  $i$th component of $u_\gamma(x)$ and $f(x)$, respectively, i.e., $u_i$ solves the inclusion
\[0\in \gamma u_i +f_i +\partial q_i(x_i+u_i)=\gamma(x_i+u_i)+(f_i-\gamma x_i)+\partial q_i(x_i+u_i)\]
Let
\[j_i=\min\{j\in\{1,\ldots,2m_i\}\mv \gamma x_i-f_i < \gamma \xi_j^i+\eta_j^i\}\quad (\mbox{$\infty$, if $\gamma x_i-f_i \geq \gamma \xi_{2m_i}^ i+\eta_{2m_i}^i$}).\]
Then
\[u_i+x_i=\begin{cases}\xi_1^i&\mbox{if $j_i=1$,}\\
\xi_{j_i-1}^i+t_i\Delta\xi_{j_i-1}^i&\mbox{if $1<j_i\leq 2m_i$,}\\
\xi_{2m_i}^i&\mbox{if $j_i=\infty$,}\end{cases}
\ \mbox{with}\ t_i=\frac{\gamma x_i-f_i-(\gamma\xi_{j_i-1}^i+\eta_{j_i-1}^i)}{\gamma \Delta\xi_{j_i-1}^i+\Delta\eta_{j_i-1}^i}.\]

The matrix $G$, needed in the Newton step, can be computed as follows.
\begin{proposition}\label{coder}
Let $(x,x^*)\in \gph \partial{q}$ and $G$ be $n \times n$ diagonal matrix with entries
\begin{equation}
G_{ii}=\begin{cases}
1   & \mbox{ if  $x_i \in \mathcal{A}_i:= \{ \xi^i_1  \} \cup \{ \xi^i_{2m_i} \} \cup \{ \xi^i_j \, | \, \Delta \xi^i_j = 0   \}$ } \\
\frac{\Delta \eta^i_j}{\Delta \xi^i_j + \Delta \eta^i_j} &\mbox{ if  $x_i \in [\xi^i_j, \xi^i_{j+1}] \setminus \mathcal{A}_i$ and  $j \in \{ 1, \ldots, 2 m_i \}$  is odd} \end{cases}\label{G_i_2}
\end{equation}
for $i=1, 2, \dots, n$.
Then $G$ fulfills the conditions stated in Theorem 3.6.
\end{proposition}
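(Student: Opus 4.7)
The plan is to verify the properties demanded by Theorem~\ref{ThCoderSubdiff}: symmetry, positive semidefiniteness with $\Norm{G}\leq 1$, and the range inclusion $\rge{G}\subseteq\gph D^*(\partial q)(x,x^*)$. The first two are immediate from the formula: $G$ is diagonal by construction, and $G_{ii}\in[0,1]$ because $\Delta\eta^i_j\geq 0$ and $\Delta\xi^i_j+\Delta\eta^i_j>0$ whenever $j$ is odd. The crux is the range inclusion, and here I will exploit the separability of $q$: since $\partial q(x)=\prod_i\partial q_i(x_i)$, the graph $\gph\partial q$ is a Cartesian product and the product rule for limiting normal cones reduces everything to $n$ independent conditions. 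Concretely, asking $((I-G)v^*,Gv^*)\in\gph D^*(\partial q)(x,x^*)$ for every $v^*\in\R^n$ is equivalent to requiring that, for each $i$ and every scalar $t\in\R$,
\[(G_{ii}\,t,\,-(1-G_{ii})\,t)\in N_{\gph\partial q_i}(x_i,x_i^*).\]

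This coordinate-wise inclusion will be settled by a case split matching the two branches in the definition of $G_{ii}$. In Case A, $x_i\in\mathcal{A}_i$ and $G_{ii}=1$, so I must show that the whole horizontal line $\{(t,0)\mv t\in\R\}$ is contained in $N_{\gph\partial q_i}(x_i,x_i^*)$. The structural fact driving this is that every $x$-value in $\mathcal{A}_i$ is touched by a vertical piece of the polygon---the two infinite arms at $\xi^i_1$ and $\xi^i_{2m_i}$, and the vertical segments of positive length corresponding to even $j$---and the normal cone at any relative interior point of such a vertical piece is exactly that horizontal line; the same line persists in the limiting normal cone at a corner by taking limits along interior points of the adjacent vertical piece. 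In Case B, $x_i\in[\xi^i_j,\xi^i_{j+1}]\setminus\mathcal{A}_i$ for some odd $j$. Since both endpoints $\xi^i_j$ and $\xi^i_{j+1}$ belong to $\mathcal{A}_i$, $x_i$ must lie in the open interval, so $(x_i,x_i^*)$ sits in the relative interior of the affine segment with direction $(\Delta\xi^i_j,\Delta\eta^i_j)$. The normal cone is then the one-dimensional subspace spanned by $(-\Delta\eta^i_j,\Delta\xi^i_j)$, and requiring the pair $(G_{ii}\,t,\,-(1-G_{ii})\,t)$ to lie on it for every $t$ collapses to the single scalar relation $G_{ii}\Delta\xi^i_j=(1-G_{ii})\Delta\eta^i_j$, whose unique solution is precisely the formula of the proposition (and reduces to $G_{ii}=0$ on horizontal pieces, as it should).

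The main obstacle is the careful identification of the limiting normal cone at the corners of the polygon appearing in Case A: the Fr\'echet cone at such a corner is a genuine two-dimensional wedge, but the limiting cone also contains the two one-dimensional perpendiculars to the incident pieces, and I need precisely the one perpendicular to the vertical side. The observation that at \emph{every} corner one of the two incident pieces is vertical---a direct consequence of the strict alternation of $\Delta\xi^i_j=0$ and $\Delta\xi^i_j>0$ forced by the parity of $j$---is what makes the uniform choice $G_{ii}=1$ succeed throughout $\mathcal{A}_i$. Once this structural observation is in hand, Case B reduces to one line of linear algebra, and the three conditions of Theorem \ref{ThCoderSubdiff} are established.
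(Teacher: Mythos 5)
Your proof is correct and follows essentially the same route as the paper: reduce to a coordinate-wise condition via separability of $q$ and the product structure of $\gph\partial q$, read off the normal cone $\R\times\{0\}$ on the vertical pieces (giving $G_{ii}=1$) and the orthogonal line to $(\Delta\xi^i_j,\Delta\eta^i_j)$ on the relative interiors of the odd segments (giving the ratio formula), then note $G_{ii}\in[0,1]$ yields positive semidefiniteness and $\norm{G}\le 1$. Your handling of the corner points (obtaining the horizontal line in the limiting normal cone as a limit of normals along the adjacent vertical piece) is somewhat more explicit than the paper's, which simply asserts the inclusion (and, in its displayed formula, writes the components of that normal cone in swapped order).
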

\begin{proof}
Observe first that the set
$$ \{ (x_i, x_i^*) \in \gph \partial q_i \, | \, x_i \in \mathcal{A}_i \}$$
comprises all points of $\gph \partial q_i$ lying in its vectical line segments and
$$\bigcup\limits_{\substack{j=1 \\ j {\mbox{ \scriptsize odd}} }}^{2 m_i} [\xi^i_j, \xi^i_{j+1}] \setminus \mathcal{A}_i = \{ x_i \in \dom \partial q_i \, | \, \partial q (x_i) \mbox{ is a singleton} \}.$$
Next let us notice that
$\gph  D^*(\partial{q})(x,x^*) =
\prod\limits_{i=1}^n
\gph  D^*(\partial{q_i})(x_i,x^*_i)$, where
$$ \gph D^*(\partial q_i)(x_i,x^*_i)=\{(u,v) | (v,-u)\in N_{\gph\partial{q_i}}(x_i,x^*_i)\} $$ and
$$N_{\gph\partial{q_i}}(x_i,x^*_i) \supset
\begin{cases}
\{0\} \times \mathbb{R} & \mbox{provided } x_i \in \mathcal{A}_i \\
\mathbb{R} (\Delta \eta^i_j, -\Delta \xi^i_j) & \mbox{ provided } x_i \in [\xi^i_j, \xi^i_{j+1}] \setminus \mathcal{A}_i \mbox{ and } j \in \{ 1, \ldots, 2 m_i -1\}  \mbox{ is odd}.
\end{cases}$$
From this analysis it follows that, in order to fulfill inclusion \eqref{EqRgeG}, it suffices to construct $G$ as a diagonal matrix, where $G_{ii}=1$ provided $x_i \in \mathcal{A}_i$. Otherwise, if $x_i \in [\xi^i_j, \xi^i_{j+1}] \setminus \mathcal{A}_i$ for some odd $j \in \{ 1, \ldots, 2 m_i -1\}$, then we put $G_{ii}$ as the (unique) solutions of the equation
$$ \frac{\Delta \eta^i_j}{\Delta \xi^i_j} = \frac{G_{ii}}{1-G_{ii}}. $$
The above equation is well-posed because $\Delta \xi^i_j > 0$ for $j$ odd and leads to the second line in formula \eqref{G_i_2}. By construction, all elements $G_{ii}$ belong to $[0, 1]$, which implies that G is positive semidefinite and $\norm{G} \leq1$. Thus, the proof is complete.
\end{proof}
In the example depicted in Figure \ref{fig:gph_partial} one obtains in this way that
\begin{equation}\label{G_i_simplified}
G_{ii}=
\begin{cases}
1   &  \mbox{if } x_i \in \mathcal{A}_i:= \{ b_i \} \cup \{ a_i \} \cup \{ d_i \}  \\
0   &  \mbox{if } x_i \in (b_i, a_i) \cup (a_i, d_i),
\end{cases}
\end{equation}
which has been used in the computations discussed below.

\if{
\begin{proposition}\label{coder}
Let $(x,x^*)\in \gph \partial{q}$ and for $i=1,2,\ldots,n$
\begin{equation}\label{G_i}
G_i=
\begin{cases}
e_i   &  \mbox{if } (x_i,x^*_i) \in( \{b_i \}\times(-\infty,-\beta_i])\cup(\{a_i\}\times[-\beta_i,\beta_i])\cup(\{d_i\}\times[\beta_i,\infty)),\\
0 & \mbox{otherwise}.
\end{cases}
\end{equation}
Then $G$ fulfills the conditions stated in Theorem 3.6.
\end{proposition}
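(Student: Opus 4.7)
The plan is to reduce the claim to the already-established Proposition \ref{coder} by recognising that the present statement is simply its specialisation to the piecewise linear convex functions $q_i(x_i) = \beta_i|x_i-a_i| + \delta_{[b_i,d_i]}(x_i)$ arising in the Cournot-Nash model. Since the specified $G$ is diagonal with entries in $\{0,1\}$, it is automatically symmetric, positive semidefinite and satisfies $\norm{G}\leq 1$, so only the range inclusion \eqref{EqRgeG} has to be verified. By separability of $q$, both $\gph\partial q$ and $\gph D^*(\partial q)(x,x^*)$ factor over the components, and the diagonal structure of $G$ reduces the range condition to the scalar statements
\[\big((1-G_{ii})v^*,\, G_{ii} v^*\big)\in \gph D^*(\partial q_i)(x_i,x_i^*)\qquad \forall v^*\in\R,\ i=1,\ldots,n.\]

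Next, I would fit the polygonal line $\gph\partial q_i$ of Figure \ref{fig:gph_partial} into the general template preceding Proposition \ref{coder}, with $m_i=2$ and breakpoints
\[\xi_1^i=b_i,\ \xi_2^i=\xi_3^i=a_i,\ \xi_4^i=d_i,\quad \eta_1^i=\eta_2^i=-\beta_i,\ \eta_3^i=\eta_4^i=\beta_i.\]
A direct check of the increments ($\Delta\xi_1^i=a_i-b_i>0$, $\Delta\xi_2^i=0$, $\Delta\xi_3^i=d_i-a_i>0$; $\Delta\eta_1^i=\Delta\eta_3^i=0$, $\Delta\eta_2^i=2\beta_i>0$) confirms the sign conditions and yields $\mathcal{A}_i=\{b_i,a_i,d_i\}$. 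Feeding these numbers into formula \eqref{G_i_2} gives $G_{ii}=1$ for $x_i\in\mathcal{A}_i$ and, on either open segment $(b_i,a_i)$ or $(a_i,d_i)$ with the corresponding odd index $j$, the fractional entry collapses to $\Delta\eta_j^i/(\Delta\xi_j^i+\Delta\eta_j^i)=0$.

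It then only remains to observe that the condition on $(x_i,x_i^*)$ in the present hypothesis is equivalent to $x_i\in\mathcal{A}_i$ whenever $(x,x^*)\in\gph\partial q$: a direct computation gives $\partial q_i(b_i)=(-\infty,-\beta_i]$, $\partial q_i(a_i)=[-\beta_i,\beta_i]$ and $\partial q_i(d_i)=[\beta_i,\infty)$, so the three-part union appearing in the hypothesis is precisely the portion of $\gph\partial q_i$ lying over $\mathcal{A}_i$. The $G$ prescribed here therefore coincides with the one produced by Proposition \ref{coder}, and both the range inclusion \eqref{EqRgeG} and the remaining requirements of Theorem \ref{ThCoderSubdiff} follow immediately. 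The only delicate point hidden in the background — the behaviour of the limiting normal cone to $\gph\partial q_i$ at the four corners where a vertical and a horizontal segment meet — has already been handled inside the proof of Proposition \ref{coder}, which is why that reduction, rather than a direct normal-cone calculation, is the cleanest route.
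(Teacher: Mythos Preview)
Your argument is correct, but it proceeds differently from the paper. The paper's own proof of this (specialised) proposition does not invoke the general version of Proposition~\ref{coder} at all; instead it argues directly: it records the product decomposition $\gph D^*(\partial q)(x,x^*)=\prod_i\gph D^*(\partial q_i)(x_i,x_i^*)$, then simply observes that on a vertical segment of $\gph\partial q_i$ the limiting normal cone contains $\R\times\{0\}$ (so $(0,v^*)\in\gph D^*(\partial q_i)$, matching $G_{ii}=1$), while on a horizontal segment it contains $\{0\}\times\R$ (so $(v^*,0)\in\gph D^*(\partial q_i)$, matching $G_{ii}=0$), and finishes with the remark that a $\{0,1\}$-diagonal matrix is positive semidefinite with norm at most one. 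Your route---fitting the polygonal line into the general template and reading off $G_{ii}$ from formula \eqref{G_i_2}---is slightly longer to write but has the merit of avoiding any fresh normal-cone calculation, including at the corners, since all of that is absorbed into the proof of the general proposition. The paper's direct computation, by contrast, is self-contained and makes the underlying geometry (horizontal versus vertical pieces) completely explicit. Both are valid; your reduction tacitly uses $b_i<a_i<d_i$ and $\beta_i>0$ (needed for the sign conditions on $\Delta\xi_j^i$, $\Delta\eta_j^i$ in the general template), which is harmless in the intended setting but worth noting.
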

\begin{proof}
Observe first that
$\gph  D^*(\partial{q})(x,x^*) =
\varprod \limits_{i=1}^n
\gph  D^*(\partial{q_i})(x_i,x^*_i)$, where
$$ \gph D^*(\partial q_i)(x_i,x^*_i)=\{(u,v) | (v,-u)\in N_{\gph\partial{q_i}}(x_i,x^*_i)\} $$ and
$$N_{\gph\partial{q_i}}(x_i,x^*_i) \supset
\begin{cases}
\mathbb{R} \times \{0\}   &  \mbox{at the vertical line segments} \\
\{0\} \times \mathbb{R} & \mbox{at the horizontal line segments}
\end{cases}$$
of $\gph \partial{q}, i=1, \ldots, n$.
From this it immediately follows that $G$, composed according to \eqref{G_i}, fulfills the inclusion
$$ \rge{G} \subset \gph D^*(\partial{q})(x,x^*).$$
Since $G$ is diagonal with non-negative eigenvalues less or equal 1, all conditions of Theorem 3.6 are satisfied and we are done.
\end{proof}

}\fi

\newcolumntype{R}{>{\raggedleft\arraybackslash}X}
\newcolumntype{G}{>{\raggedleft\arraybackslash}X}
\begin{table}[ht]
\normalsize
\begin{tabularx}{\linewidth}{| X | X | R R R R R | }
\hline
& $\;i$ & 1 & 2 & 3 & 4 & 5 \\
\hline
convex $c_1$ & strategies
& 49.411& 51.140& 54.236& 48.054& 43.095\\
from \cite{OV}
& objectives
& -377.239& -459.943& -639.952& -503.445& -507.100\\
 & value $q_i(x_i)$
& 0.800& 0& 5.831& 0& 0\\
\hline
concave $c_1$ & strategies
& 97.191& 51.140& 51.320& 43.254& 39.470\\
from \eqref{concave_c1} & objectives
& -427.320& -325.770& -491.385& -367.618& -387.836\\
 & value $q_i(x_i)$ & 24.691& 0& 0& 0& 0\\
\hline
\end{tabularx}
\caption{Cournot-Nash equilibrium strategies $x_i$, the corresponding objective values and costs of change $q_i(x_i)$ in case of convex and concave cost function $c_1$.} \label{tab:Cournot}
\end{table}

Next we will present the numerical results for both the monotone and non-monotone case discussed above. In the former one we have used the data from \cite[Section 5.1]{OV} with $t=1$ and started the iteration process at the initial iterate $x^{(0)}=(75, 75, \dots, 75)$. The results are displayed in the upper part of Table 1. Concerning the non-monotone case, we have replaced the original convex production cost function $c_1$ by a concave one, given by
\begin{equation} \label{concave_c1}
c_1(x_1):= -(1/50) x_1^2 + 15 x_1
\end{equation}
and started from the same vector $x^{(0)}$. The results are displayed in the lower part of Table 1. In both cases we have set
$\gamma=1$ in the approximation step and, as the stopping criterion, we have used the condition $|| u || \leq \epsilon = 10^{-10}$, where $u$ is the output of the approximation step.


\begin{figure}[bh]
\begin{center}
\begin{tikzpicture}[scale=0.82]
\begin{semilogyaxis}
[xlabel=$k$,ylabel=$||u_k||$]
\addplot[color=blue,mark=x] coordinates {
(0,  20.853178402314793)
 (1,  5.369773732551855)
 (2,  1.468940319407929)
 (3,  0.755432776659333)
 (4,  0.000016094606048)
 (5,  0.000000000000557)
};
\end{semilogyaxis}
\end{tikzpicture}
\qquad
\begin{tikzpicture}[scale=0.82]
\begin{semilogyaxis}
[xlabel=$k$,ylabel=$||u_k||$,
]
\addplot[color=blue,mark=x] coordinates {
(0,  20.853178402314793)
 (1,  4.798082169927617)
 (2,  3.863410857656254)
 (3,  5.366813340441373)
 (4,  1.819430536754804)
 (5,  0.902708819979616)
 (6,  0.000504712243625)
 (7,  0.000000061147895)
 (8,  0.000000000000001)
};
\end{semilogyaxis}
\end{tikzpicture}
\caption{Convergence of $||u^{(k)}||$ in case of convex (left) or concave (right) cost functions $c_1$. }
    \label{fig:convergence}
    \end{center}
\end{figure}
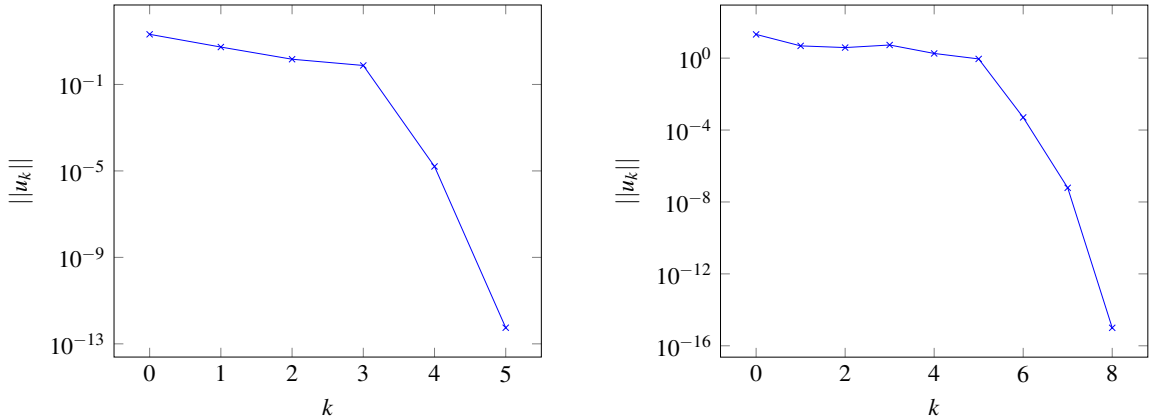

Figure 2 illustrates very well the superlinear convergence of Algorithm 1 whenever one reaches the respective neighborhood of the solution. Note that in our implementation we moved the stopping criterion behind the approximation step in order to dispose with the actual value of $u$. Further observe that the lowest eigenvalue of the symmetrized Jacobian of $f$ at the solution is only 0.033474  in the non-monotone case whereas it amounts to 0.207905 in the case of convex $c_1$.

\if{Numerical results were generated by a Matlab code available for download at:
\begin{center}
{\em \url{https://www.mathworks.com/matlabcentral/fileexchange/72771}} .
\end{center}}\fi

\subsection{Randomly constructed test problems}\label{subsection_6.2}

Given  the problem dimension $n$ and a parameter $\beta>0$, we construct an instance of  the problem \eqref{EqVI2ndK} as follows.
\begin{enumerate}
  \item We  randomly  compute an $n\times n$ matrix $C$ with elements uniformly distributed in $[-1,1]$ and set
  \[f(x) = \nabla h(x) +(C-C^T)x \mbox{ with } h(x)=(x^TAx)^2 ,\ A=\frac\beta {n} CC^T.\]
  Note that $f$ is a maximal monotone operator but the skew-symmetric part $(C-C^T)x$ dominates $f(x)$ for small values of $\beta$.
  \item The set-valued part is of the form $\partial q(x)=\prod_{i=1}^n\partial q_i(x)$, where each $\gph \partial q_i$ is a polygonal line in $\R^2$ connecting the points
  \eqref{EqRes1}
  as described in the previous subsection. The integer $m_i$ is randomly chosen in $[1,10]$, $\xi_1$ is randomly chosen from $[-\frac{m_i}2, \frac{m_i}2]$, $\eta_1$ is randomly chosen from $[-\frac{3\beta m_i}2,0]$
  and
  \[ \Delta\xi_j:=\xi_{j+1}-\xi_j\begin{cases}\in[0,1]&\mbox{if $j$ even}\\=0&\mbox{if $j$ odd}\end{cases},\quad \Delta\eta_j:=\eta_{j+1}-\eta_j\in[0,\beta], \quad j=1,\ldots,2m_i-1.\]
\end{enumerate}

\if{In case when $1<j_i\leq 2m_i$ we have $t_i\in[0,1)$ by construction. If $t_i>0$, then $\widehat N_{\gph \partial q_i}(x_i+u_i, -f_i-\gamma u_i)=\R(\Delta\eta_{j_i-1},-\Delta\xi_{j_i-1})$. On the other hand, when $t_i=0$ there still holds $\R(\Delta\eta_{j_i-1},-\Delta\xi_{j_i-1})\subseteq N_{\gph \partial q_i}(x_i+u_i, -f_i-\gamma u_i)$. In case when $j_i=1$ or $j_i=\infty$ we have $\R(0,1)\subseteq N_{\gph \partial q_i}(x_i+u_i, -f_i-\gamma u_i)$.
As a positive semidefinite matrix $G$ with $\norm{G}\leq 1$ and $\rge{G}\subseteq \gph D^*(\partial q)(x+u_\gamma(x), -f(x)-\gamma u_\gamma(x))$ we can therefore  choose a diagonal matrix
with entries
\[G_{ii}=\begin{cases}1&\mbox{if $j_i=1$ or $j_i=\infty$,}\\
\frac{\Delta\eta_{j_i-1}}{\Delta\xi_{j_i-1}+\Delta\eta_{j_i-1}}&\mbox{else,}\end{cases}\quad i=1,\ldots,n.\]
}\fi

The mapping $\partial q$ is a maximal strongly monotone mapping (with probability 1) and so is the mapping $H$ as well. Thus, the GE \eqref{EqVI2ndK} has always a unique solution $\xb$, but the problem characteristic will change with $\beta$ and $n$.

For each $n\in\{150,600,2400\}$ and each $\beta\in\{1,10^{-2},10^{-4}\}$ we constructed 5 test problems. In  Table \ref{TabCharVal} we list the mean values for some characteristic values for different combinations of $\beta$ and $n$.
\begin{table}[h]
\begin{center}
\begin{tabular}{|c@{}|@{}c@{}|@{}c@{}||@{}c@{}|@{}c@{}||@{}c@{}|@{}c|}
\hline
&\multicolumn{2}{|c||}{$\beta=1$}&\multicolumn{2}{|c||}{$\beta=0.01$}&\multicolumn{2}{|c|}{$\beta=10^{-4}$}\\
\hline
$n$&$\begin{array}{c}\norm{\nabla f(\xb)}\\\norm{\nabla f(\xb)^{-1}}\end{array}$ & $\begin{array}{c}\mu_f(\xb)\\ \mu_q(\xb)\end{array}$
&$\begin{array}{c}\norm{\nabla f(\xb)}\\\norm{\nabla f(\xb)^{-1}}\end{array}$ & $\begin{array}{c}\mu_f(\xb)\\ \mu_q(\xb)\end{array}$
&$\begin{array}{c}\norm{\nabla f(\xb)}\\\norm{\nabla f(\xb)^{-1}}\end{array}$ & $\begin{array}{c}\mu_f(\xb)\\ \mu_q(\xb)\end{array}$  \\
\hline
150&$\begin{array}{c}83.7\\0.86\end{array}$&$\begin{array}{c}5.2\times10^{-4}\\5.8\times10^{-2}\end{array}$
&$\begin{array}{c}19.5\\31.1\end{array}$&$\begin{array}{c}1.1\times 10^{-7}\\5.9\times10^{-4}\end{array}$
&$\begin{array}{c}19.4\\17.7\end{array}$&$\begin{array}{c}4.2\times 10^{-11}\\5.2\times10^{-6}\end{array}$\\
\hline
600&$\begin{array}{c}290\\0.496\end{array}$&$\begin{array}{c}1.0\times10^{-4}\\7.5\times 10^{-3}\end{array}$
&$\begin{array}{c}39.7\\42.9\end{array}$&$\begin{array}{c}9.5\times 10^{-9}\\8.3\times10^{-5}\end{array}$
&$\begin{array}{c}39.1\\25.7\end{array}$&$\begin{array}{c}4.5\times 10^{-12}\\4.4\times10^{-7}\end{array}$\\
\hline
2400&$\begin{array}{c}1202\\0.387\end{array}$&$\begin{array}{c}2.7\times10^{-5}\\2.9\times 10^{-3}\end{array}$
&$\begin{array}{c}79.5\\18.7\end{array}$&$\begin{array}{c}2.8\times 10^{-9}\\8.8\times10^{-5}\end{array}$
&$\begin{array}{c}79.6\\125\end{array}$&$\begin{array}{c}6.4\times 10^{-13}\\5.3\times10^{-7}\end{array}$ \\
\hline
\end{tabular}
\end{center}
\caption{Some characteristic values for the problems} \label{TabCharVal}
\end{table}

Note that $\norm{\nabla f(\xb)}$ acts as a local Lipschitz constant for $f$ near the solution $\xb$, whereas $\mu_f(\xb)$ and $\mu_q(\xb)$ given by \eqref{EqMu_f},\eqref{EqMu_q} are constants for local strong monotonicity for $f$ and $\partial q$.

Theoretically the global convergent methods  obey linear convergence properties. However, the available bounds for the convergence factors depend in some way on the ratio of the Lipschitz constant of $f$ and the constants of strong monotonicity for $f$ and $q$, c.f. \cite{ChRo97},\cite{SolSv99},\cite{LiMe79}. We see from the table above that this ratio worsen for small values of $\beta$ and large $n$ and our numerical experiments confirm these estimates. In particular, for $\beta=0.01$ and $\beta=10^{-4}$ we could not observe linear convergence neither for the FB-splitting method nor the DR-splitting method and the hybrid projection method. These methods were not able to compute an accurate solution within a reasonable time.

In the following tables we display for different combinations of $n$ and $\beta$ the mean values for the number $N$ of computed Newton directions, the number $G$ of calls to the globally convergent method and the number $F$ of evaluations of $f$ as well as the mean CPU-time in seconds needed to reach a residual less than $10^{-8}$. A time limit was set to $10^{-4}n^2$ seconds to perform this task. We tested the heuristic of Algorithm \ref{AlgSSNewtHeur}, the globally convergent hybrid algorithm \ref{AlgSSNewt_Hybrid} combined with any of the three globally convergent methods $\T^{\rm FB}_\gamma$, $\T^{\rm DR}_\gamma$ and $\T^{\rm Pr}_\gamma$ as described in Subsection \ref{SubSecSS_Hybrid}, Algorithm \ref{AlgSSNewt_DR} as well as all three globally convergent methods $\T^{\rm FB}_\gamma$, $\T^{\rm DR}_\gamma$ and $\T^{\rm Pr}_\gamma$ alone. We always choose $\nu=0.1$ and $\gamma^{(k)}=\norm{\nabla f(x^{(k)})}_1/\sqrt{n}$. In Algorithm \ref{AlgSSNewtHeur} we set $\delta^{(k)}=0.1/k$ and in Algorithm \ref{AlgSSNewt_Hybrid} we used $\delta^{(k)}\equiv 5\times 10^{-4}$. Finally, the parameter $\xi$ in Algorithm \ref{AlgSSNewt_DR} was set to $0.9$. For all test problems the origin was chosen as the starting point.

All tests were performed in MATLAB using a desktop equipped with an i7-7700 CPU, 3.6 GHZ and 32GB RAM.

\begin{table}[ht]
\[\begin{tabular}{|l|c|c|c|}
\hline
&n=150&n=600&n=2400\\
&$\begin{array}{c}\mbox{N/G/F}\\\mbox{CPU}\end{array}$&$\begin{array}{c}\mbox{N/G/F}\\\mbox{CPU}\end{array}$&$\begin{array}{c}\mbox{N/G/F}\\\mbox{CPU}\end{array}$\\
\hline
Alg.\ref{AlgSSNewt_DR}&$\begin{array}{c}5.2/5.2/41.4\\0.029\end{array}$&$\begin{array}{c}5.8/5.8/43.6\\0.394\end{array}$&$\begin{array}{c}6/6/40.6\\7.07\end{array}$\\
\hline
Alg. \ref{AlgSSNewtHeur}, Alg. \ref{AlgSSNewt_Hybrid}
&$\begin{array}{c}7.2/-/8.2\\0.012\end{array}$&$\begin{array}{c}7.6/-/8.6\\0.099\end{array}$&$\begin{array}{c}7.6/-/8.6\\2.15\end{array}$\\
\hline
$\T^{\rm FB}$&$\begin{array}{c}-/117.8/118.8\\0.164\end{array}$&$\begin{array}{c}-/158/159\\0.935\end{array}$&$\begin{array}{c}-/171.4/172.4\\7.35\end{array}$\\
\hline
$\T^{\rm DR}$&$\begin{array}{c}-/89/351.2\\0.157\end{array}$&$\begin{array}{c}-/106.2/406.2\\3.18\end{array}$&$\begin{array}{c}-/114.2/423.8\\49.8\end{array}$\\
\hline
$\T^{\rm Pr}$&$\begin{array}{c}-/487.6/978.2\\0.526\end{array}$&$\begin{array}{c}-/1322/2648\\7.84\end{array}$&$\begin{array}{c}-/1445/2894\\92.7\end{array}$\\
\hline
\end{tabular}\]
\caption{Test results for $\beta=1$}\label{TabBeta1}
\end{table}
In case when $\beta=1$ all tested methods found a solution with the prescribed tolerance within the given time limit. The heuristic Algorithm \ref{AlgSSNewtHeur} as well as the hybrid Algorithm \ref{AlgSSNewt_Hybrid} executed only Newton steps with stepsize $\alpha^{(k)}=1$, i.e., they behave like the pure \ssstar Newton method of Algorithm \ref{AlgNewton}, and showed the best performance of all methods. The second best one was Algorithm \ref{AlgSSNewt_DR} which was in turn faster than any of the three globally convergent methods.
\begin{table}[ht]
\[\begin{tabular}{|@{}l@{}|@{}c@{}|@{}c@{}|@{}c@{}|}
\hline
&n=150&n=600&n=2400\\
&$\begin{array}{c}\mbox{N/G/F}\\\mbox{CPU}\end{array}$&$\begin{array}{c}\mbox{N/G/F}\\\mbox{CPU}\end{array}$&$\begin{array}{c}\mbox{N/G/F}\\\mbox{CPU}\end{array}$\\
\hline
Alg.\ref{AlgSSNewt_DR}&$\begin{array}{c}45.8/45.8/333.4\\0.181\end{array}$&$\begin{array}{c}87.8/87.8/625\\4.34\end{array}$&$\begin{array}{c}105.8/105.8/771.4\\85.7\end{array}$\\
\hline
Alg. \ref{AlgSSNewtHeur}&$\begin{array}{c}439.6/-/1982\\1.34\end{array}$&$\begin{array}{c}986.4/-/4260\\22.4\end{array}$&$\begin{array}{c}716.2/-/2748\\265\end{array}$\\
\hline
Alg. \ref{AlgSSNewt_Hybrid}($\T^{\rm FB}$) &$\begin{array}{c}134.6/91.2/565\\0.469\end{array}$&$\begin{array}{c}482.2/42.6/1660\\9.82\end{array}$&$\begin{array}{c}624/-/1829\\209\end{array}$\\
\hline
Alg. \ref{AlgSSNewt_Hybrid}($\T^{\rm DR}$)&$\begin{array}{c}155.8/9.2/596.2\\0.393\end{array}$&$\begin{array}{c}451.2/9.6/1565\\8.95\end{array}$&$\begin{array}{c}624/-/1829\\209\end{array}$\\
\hline
Alg. \ref{AlgSSNewt_Hybrid}($\T^{\rm Pr}$)&$\begin{array}{c}165.8/9.6/593.6\\0.417\end{array}$&$\begin{array}{c}459/8.4/1564\\9.25\end{array}$&$\begin{array}{c}624/-/1829\\209\end{array}$\\
\hline
\if{$\T^{\rm FB}$&$\begin{array}{c}-/1669/1678\\2.25 (res=11.2)\end{array}$&$\begin{array}{c}-/5687/5696\\36 (res=26.3)\end{array}$&$\begin{array}{c}-/13266/13244\\576 (res=24.8)\end{array}$\\
\hline
$\T^{\rm DR}$&$\begin{array}{c}-/1621/5017\\2.25 (res=2.5\times10^{-3})\end{array}$&$\begin{array}{c}-/1421/4495\\36 (res=1.6\times10^{-2})\end{array}$&$\begin{array}{c}-/1584/5284\\576 (res=2.7\times10^{-2})\end{array}$\\
\hline
$\T^{\rm Pr}$&$\begin{array}{c}-/2144/4290\\2.25 (res=3.1\times10^{-3})\end{array}$&$\begin{array}{c}-/5685/11372\\26 (res=4.0\times10^{-4})\end{array}$&$\begin{array}{c}-/9125/18253\\576 (res=1.6\times10^{-6})\end{array}$\\
\hline}\fi
\end{tabular}\]
\caption{Test results for $\beta=10^{-2}$}\label{TabBeta01}
\end{table}

The results for $\beta=10^{-2}$ are listed in Table \ref{TabBeta01}. Algorithm \ref{AlgSSNewt_DR} was the fastest one, whereas the three globally convergent method did not find a solution within the time limit. Note that in case $n=2400$ only (damped) Newton steps were performed and therefore no calls to the global convergent method were done.
\begin{table}[ht]
\[\begin{tabular}{|@{}l@{}|@{}c@{}|@{}c@{}|@{}c@{}|}
\hline
&n=150&n=600&n=2400\\
&$\begin{array}{c}\mbox{N/G/F}\\\mbox{CPU}\end{array}$&$\begin{array}{c}\mbox{N/G/F}\\\mbox{CPU}\end{array}$&$\begin{array}{c}\mbox{N/G/F}\\\mbox{CPU}\end{array}$\\
\hline
Alg.\ref{AlgSSNewt_DR}&$\begin{array}{c}67.2/67.2/463.8\\0.271\end{array}$&$\begin{array}{c}153.4/153.4/1053\\6.45\end{array}$&$\begin{array}{c}315.8/315.8/2115\\207\end{array}$\\
\hline
Alg. \ref{AlgSSNewt_Hybrid}($\T^{\rm DR}$)&$\begin{array}{c}210.4/132/1028\\0.636\end{array}$&$\begin{array}{c}458.8/321.2/2203\\13.9\end{array}$&$\begin{array}{c}962.6/641.2/4506\\482\end{array}$\\
\hline
Alg. \ref{AlgSSNewt_Hybrid}($\T^{\rm Pr}$)&$\begin{array}{c}176.2/109/761.6\\0.514\end{array}$&$\begin{array}{c}695.4/479.4/3088\\15.6\end{array}$&$\begin{array}{c}1316/912.2/5913\\509\end{array}$\\
\hline
\end{tabular}\]
\caption{Test results for $\beta=10^{-4}$}\label{TabBeta0001}
\end{table}
For $\beta=10^{-4}$ Algorithm \ref{AlgSSNewt_DR} was again the fastest method, cf. Table \ref{TabBeta0001}. Now, in addition to the three globally convergent methods, also the heuristic Algorithm \ref{AlgSSNewtHeur} failed.

\section{Conclusion}
The theoretical background of the \ssstar Newton method has been established in \cite{GfrOut19a}. The main aim of this paper is to implement this method to a class of VIs of the second kind and to examine its numerical properties via extensive numerical experiments. The performed tests show in a convincing way that the new Newton method represents an efficient numerical tool for a number of complicated equilibrium problems. It can be used, e.g., for the computation of Nash equilibria in case of nonsmooth (and even nonconvex) objectives of the players. Further, in combination with some splitting algorithms, it exhibits remarkable (global and local) convergence properties when applied to rather complicated family of monotone variational inequalities of the second kind.
\medskip

\noindent{\bf Acknowledgements.}
The research of the first author was  supported by the Austrian Science Fund
(FWF) under grant P29190-N32. The  research of the third author was supported by the Grant Agency of the
Czech Republic and the Austrian Science Fund, project GACR-FWF  19-29646L.


\begin{thebibliography}{99}
\bibitem{Aub98}
{\sc J.-P. Aubin}, {\em Optima and Equilibria}, Springer, Berlin, 1998.
\vspace{-2pt}
%
\bibitem{ChRo97}
{\sc G.~H.-G. Chen, R.~T. Rockafellar}, {\em Convergence rates in forward-backward splitting}, SIAM J. Optim. 7 (1997), pp.~421-444.
\vspace{-2pt}
%
\bibitem{FaPa03}
{\sc F. Facchinei, J.-S. Pang}, {\em Finite-Dimensional Variational Inequalities and Complementarity
Problems}, vol. I+II, Springer, New York, 2003.
\vspace{-2pt}
%
\bibitem{Fl}
{\sc S.D. Fl{\aa}m}, {\em Games and cost of change}, Ann. Oper. Res. (2020). \url{https://doi.org/10.1007/s10479-020-03585-w}.
\vspace{-2pt}
\bibitem{Gab83}
{\sc D. Gabay}, {\em Applications of the method of multipliers to variational inequalities}, in Augmented
Lagrangian Methods: Applications to the Solution of Boundary Value Problems, M. Fortin
and R. Glowinski, eds., North-Holland, Amsterdam, 1983.
\vspace{-2pt}
%
\bibitem{Gfr13a}
{\sc H. Gfrerer}, {\em On directional metric regularity, subregularity and optimality conditions for
nonsmooth mathematical programs}, Set-Valued Var. Anal., 21 (2013), 151--176.
\vspace{-2pt}
%
%
\bibitem{GO3}{\sc H. Gfrerer, J.V.~Outrata}, {\em On Lipschitzian properties of implicit
    multifunctions}, SIAM J. Optim., 26 (2016), pp.~2160--2189.
\vspace{-2pt}
%
\bibitem{GfrOut19a}
{\sc H. Gfrerer, J.~V. Outrata}, {\em On a semismooth* Newton method for solving generalized equations}, submitted, arXiv:1904.09167.
\vspace{-2pt}
%
\bibitem{Gl84}
{\sc R. Glowinski}, {\em Numerical Methods for Nonlinear Variational Problems,} Springer, New York, 1984.
\vspace{-2pt}
\bibitem{Hasl}
{\sc J. Haslinger, M. Miettinen, P.~D. Panagiotopoulos }(eds.), {\em Finite Element Method for Hemivariational Inequalities. Theory, Methods and Applications}, Kluwer, Dordrecht, 1999.
\vspace{-2pt}
\bibitem{ItKu09}
{\sc K. Ito, K. Kunisch}, {\em On a semi-smooth Newton method and its globalization}, Math. Program., 118 (2009), pp.~347--370.
\vspace{-2pt}
\bibitem{KaSch18}
{\sc C. Kanzow, A. Schwartz}, {\em Spieltheorie}, Springer Nature, Cham, 2018.
\vspace{-2pt}
\bibitem{LiMe79}
{\sc P.~L. Lions, B. Mercer}, {\em Splitting algorithms for the sum of two monotone operators}, SIAM J. Numer. Anal. 16(1979), pp.~964--979.
\vspace{-2pt}
%
\bibitem{Mif77}
{\sc R. Mifflin}, {\em Semismooth and semiconvex functions in constrained optimization}, SIAM J. Control
Optim., 15 (1977), pp.~957--972.
\vspace{-2pt}
%
\bibitem{Mo18}{\sc B.~S. Mordukhovich}, {\em Variational Analysis and Applications}, Springer, Cham
    (2018)
\vspace{-2pt}
%
\bibitem{MSS}
{\sc F.~H. Murphy, H.~D. Sherali, A.~L. Soyster}, {\em A mathematical  programming qpproach for determining oligopolistic market equilibrium}, Math. Program., 24 (1982), pp.~2--106.
\vspace{-2pt}
%
\bibitem{OKZ}
{\sc J.~V. Outrata, M. Ko\v{c}vara, J. Zowe}, {\em Nonsmooth Approach to Optimization Problems with
Equilibrium Constraints}, Kluwer Academic Publishers, Dordrecht, 1998.
\vspace{-2pt}
%
\bibitem{OV}
{\sc J.~V. Outrata, J. Valdman}, {\em On computation of optimal strategies in oligopolistic markets respecting the cost of change}, accepted in Math. Meth. Oper. Res.
\vspace{-2pt}
%
\bibitem{QiSun93}
{\sc L. Qi, J. Sun}, {\em A nonsmooth version of Newton's method}, Math. Program., 58 (1993), pp.~353--367.
\vspace{-2pt}
%
\bibitem{Rob11}
{\sc S.~M. Robinson}, {\em  A point-of-attraction result for Newton's method with point-based approximations}, Optimization, 60 (2011), pp.~89-99.
\vspace{-2pt}
%
\bibitem{RoWe98}
{\sc R.~T. Rockafellar, R.~J.-B. Wets }, {\em Variational Analysis}, Springer, Berlin, 1998.
\vspace{-2pt}
%
\bibitem{SolSv99}
{\sc M.~V. Solodov, B.~F. Svaiter}, {\em A hybrid projection--proximal point algorithm}, J. Conv. Anal. 6(1999), pp.~59--70.
\vspace{-2pt}
%
\bibitem{XiLiWeZh18}
{\sc X. Xiao, Y. Li, Z. Wen, L. Zhang}, {\em A Regularized Semi-Smooth Newton Method with
Projection Steps for Composite Convex Programs}, J. Sci. Comput., 76 (2018), pp.~364--389.
%
\end{thebibliography}
\end{document}